\newcommand{\pl}[1]{\foreignlanguage{polish}{#1}}
\theoremstyle{plain}
\newtheorem{theorem}{Theorem}[section]
\newtheorem{proposition}{Proposition}[section]
\newtheorem{lemma}{Lemma}[section]
\theoremstyle{definition}
\numberwithin{equation}{section}
\theoremstyle{plain}
\newcounter{thm}
\DeclareMathOperator{\Real}{Re}
\newcommand{\RR}{\mathbb{R}}
\newcommand{\ZZ}{\mathbb{Z}}
\newcommand{\CC}{\mathbb{C}}
\newcommand{\NN}{\mathbb{N}}
\newcommand{\calS}{\mathcal{S}}
\newcommand{\calK}{\mathcal{K}}
\newcommand{\calF}{\mathcal{F}}
\newcommand{\calI}{\mathcal{I}}
\newcommand{\calD}{\mathcal{D}}
\newcommand{\ind}[1]{{\mathds{1}_{{#1}}}}
\newcommand{\dist}{\operatorname{dist}}
\renewcommand{\atop}[2]{\substack{{#1}\\{#2}}}
\newcommand{\sprod}[2] {{#1 \cdot #2}}
\newcommand{\la}{\lambda}
\newcommand{\e}{\varepsilon}
\title[On dimension-free variational  inequalities]
{On dimension-free variational  inequalities\\
  for averaging operators in $\mathbb R^d$}
\author{Jean Bourgain}
\address{Jean Bourgain \\
  School of Mathematics\\
  Institute for Advanced Study\\
  Princeton, NJ 08540\\
  USA}
\email{bourgain@math.ias.edu}
\author{Mariusz Mirek}
\address{Mariusz Mirek \\
  School of Mathematics\\
  Institute for Advanced Study\\
  Princeton, NJ 08540\\
  USA \&
  Department of Mathematics\\
  King's College London\\
Strand, London, WC2R 2LS, UK \& 
	Instytut Matematyczny\\
	Uniwersytet \pl{Wroc{\lll}awski}\\
	Plac Grun\-waldzki 2/4\\
	50-384 \pl{Wroc{\lll}aw}\\
	Poland}
\email{mirek@math.ias.edu \& mirek@math.uni.wroc.pl}
\author{Elias M. Stein}
\address{
	Elias M. Stein\\
	Department of Mathematics\\
	Princeton University\\
	Princeton\\
	NJ 08544-100 USA}
\email{stein@math.princeton.edu}
\author{B{\l}a{\.z}ej Wr{\'o}bel}
\address{ B{\l}a{\.z}ej Wr{\'o}bel\\
	Universit\"{a}t Bonn \\
	Mathematical Institute\\
	Endenicher Allee 60\\
	D-53115 Bonn \\
	Germany \&
	Instytut Matematyczny\\
	Uniwersytet \pl{Wroc{\lll}awski}\\
	Pl. Grun\-waldzki 2/4\\
	50-384 \pl{Wroc{\lll}aw}\\
	Poland}
\email{blazej.wrobel@math.uni.wroc.pl}
\thanks{ Jean Bourgain was partially supported by NSF grant DMS-1301619.  Mariusz
  Mirek was partially  supported by the Schmidt Fellowship and the IAS Found for
  Math. and by the National Science Center, NCN grant  DEC-2015/19/B/ST1/01149.
  Elias M. Stein was partially supported by NSF grant
  DMS-1265524.  B{\l}a{\.z}ej Wr{\'o}bel was partially supported by
  the National Science Centre, NCN grant 2014\slash 15\slash D\slash ST1\slash 00405}
\begin{document}
\selectlanguage{english}

\begin{abstract}
  We study dimension-free $L^p$ inequalities for $r$-variations
  of the Hardy--Littlewood averaging operators defined over symmetric
  convex bodies in $\RR^d$.
\end{abstract}

\maketitle

\section{Introduction}\label{sec:0}

The purpose of this paper is to initiate the study of $r$-variational
estimates in the setting of dimension-free bounds for averaging
operators defined over symmetric convex bodies in $\RR^d$.

We will assume that  $G$ is a non-empty convex symmetric body in $\RR^d$, which
simply means that $G$ is  a
bounded convex open and symmetric subset of $\RR^d$. For every
$x\in\RR^d$, $t>0$ and   $f\in L_{{\rm loc}}^1(\RR^d)$ let  
\begin{align}
\label{eq:62}
  M_{t}^Gf(x)=\frac{1}{|G_t|}\int_{G_t}f(x-y){\rm d}y=\frac{1}{|G_t|}\ind{G_t}*f(x)
\end{align}
be the  Hardy--Littlewood averaging operator
defined over the sets
$$G_t=\{y\in\RR^d: t^{-1}y\in G\}.$$

For $r\in[1, \infty)$ the $r$-variation seminorm $V_r$ of
a complex-valued function $(0, \infty)\times\RR^d\ni (t, x)\mapsto\mathfrak a_t(x)$  is defined by
setting 
\[
V_r(\mathfrak a_t(x): t\in Z)=\sup_{\atop{0 < t_0<\ldots <t_J}{t_j\in Z}}
\bigg(\sum_{j=0}^J|\mathfrak a_{t_{j+1}}(x)-\mathfrak a_{t_j}(x)|^r\bigg)^{1/r},
\] 
where $Z$ is a subset of $(0, \infty)$ and the supremum is taken over all finite increasing sequences in
$Z$. Usually $r$-variation $V_r$ defined over the dyadic set $Z=\{2^n:n\in\ZZ\}$ is
called the long $r$-variation seminorm.   
In order to avoid some problems with measurability of
$V_r(\mathfrak a_t(x): t\in Z)$ we assume that $(0, \infty)\ni t\mapsto
\mathfrak a_t(x)$ is always a continuous function for every $x\in\RR^{d}$. The $r$-variational seminorm
is an invaluable tool in pointwise convergence problems. If for some $r\in [1, \infty)$ and $x\in\RR^d$ we have
\[
V_r(\mathfrak a_t(x): t>0)<\infty
\]
then the limits $\lim_{t\to0}a_t(x) $ and $\lim_{t\to\infty}a_t(x)$
exist. So we do not need to establish pointwise convergence on a dense
class, which in many cases is a challenging problem, see \cite{MTS1}
and the references given there.  Moreover, $V_r$'s control the
supremum norm in the following sense, for any $t_0>0$ we have the
pointwise estimate
\[
\sup_{t>0}|\mathfrak a_t(x)|\le |\mathfrak a_{t_0}(x)| + 2V_r(\mathfrak a_t(x): t>0).
\]
There is an extensive literature about estimates for $r$-variational
seminorms. However for our purposes the most relevant will be
\cite{JR1}, \cite{JSW} and \cite{MTS1}, see also the references given there.
\medskip

One of the main results of this paper is the following 
theorem.
\begin{theorem}
  \label{thm:6}
  Let $p\in(3/2, 4)$ and $r\in(2, \infty).$  Then there exists a
  constant $C_{p, r}>0$ independent of the dimension $d\in\NN$ and such that for every symmetric convex body $G\subset\RR^d$ 
 we have
  \begin{align}
   \label{eq:65}
    \big\|V_r\big(M_{t}^Gf: t>0\big)\big\|_{L^p}\le C_{p, r}\|f\|_{L^p}
  \end{align}
  for all  $f\in L^p(\RR^d)$.
\end{theorem}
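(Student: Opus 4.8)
The plan is to compare $M_t^G$ with the heat semigroup, for which dimension-free variational estimates hold by general principles, and to dispatch the difference by square-function estimates that are controlled on $L^2$ by Plancherel and on $L^p$, $p\in(3/2,4)$, by a more delicate Fourier-analytic argument. \emph{Reductions and multiplier bounds.} Since $M_t^{T^{-1}G}(f\circ T)=(M_t^Gf)\circ T$ for every invertible linear $T$, and $\|g\circ T\|_{L^p}$ and $\|f\circ T\|_{L^p}$ carry the same Jacobian factor, \eqref{eq:65} is unchanged if $G$ is replaced by $T^{-1}G$; we may thus assume $G$ is in isotropic position, $\frac1{|G|}\int_G y\otimes y\,{\rm d}y=\mathrm{Id}$. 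Let $m_G(\xi)=\frac1{|G|}\int_G e^{-2\pi i y\cdot\xi}\,{\rm d}y$ be the Fourier multiplier of $M_1^G$, so that $M_t^G$ has multiplier $m_G(t\,\cdot\,)$. By symmetry $m_G$ is real and $m_G(0)=1$, and elementary moment estimates for the (log-concave, unit-covariance) body $G$ give, with dimension-free constants, $|m_G(\xi)-1|\lesssim\min(1,|\xi|^2)$, $|\xi\cdot\nabla m_G(\xi)|\lesssim\min(|\xi|^2,|\xi|)$, and $m_G(\xi)=1-2\pi^2|\xi|^2+O(|\xi|^4)$ near $0$. The one nonelementary ingredient is Bourgain's dimension-free decay estimate for symmetric convex bodies, $|m_G(\xi)|\lesssim|\xi|^{-1/2}$, together with its companion $|\xi\cdot\nabla m_G(\xi)|\lesssim1$ for $|\xi|\ge1$. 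Fix $c>0$ with $e^{-c|\xi|^2}=1-2\pi^2|\xi|^2+O(|\xi|^4)$ and put $\mu_G(\xi):=m_G(\xi)-e^{-c|\xi|^2}$; combining the above, $|\mu_G(\xi)|\lesssim\min(|\xi|^4,|\xi|^{-1/2})$ and, using the cancellation of the quadratic terms, $|\xi\cdot\nabla\mu_G(\xi)|\lesssim\min(|\xi|^4,1)$, all dimension-free.

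\emph{Splitting off the heat semigroup.} With $H_s=e^{s\Delta}$, write $M_t^Gf=H_{ct^2}f+D_tf$, where $D_t$ has multiplier $\mu_G(t\,\cdot\,)$. Reparametrising $u=ct^2$,
\[
V_r\big(M_t^Gf:t>0\big)\le V_r\big(H_uf:u>0\big)+V_r\big(D_tf:t>0\big),
\]
and $\|V_r(H_uf:u>0)\|_{L^p}\lesssim_{p,r}\|f\|_{L^p}$ for every $p\in(1,\infty)$ and $r>2$, with a constant independent of $d$, by the variational theory of the heat semigroup (here the hypothesis $r>2$ enters; see \cite{JR1,MTS1}). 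For the remaining term, split the variation into long and short parts,
\[
V_r\big(D_tf:t>0\big)\lesssim V_r\big(D_{2^n}f:n\in\ZZ\big)+\Big(\sum_{n\in\ZZ}V_2\big(D_tf:t\in[2^n,2^{n+1}]\big)^2\Big)^{1/2},
\]
using $\ell^2\hookrightarrow\ell^r$. Because $D_{2^n}f\to0$ as $n\to\pm\infty$ and each term enters at most two consecutive differences, $V_r(D_{2^n}f:n)\le2\big(\sum_n|D_{2^n}f|^2\big)^{1/2}$; and by the standard short-variation square-function estimate (cf.\ \cite{JSW,MTS1}) the short-variation term is dominated by $\sum_{j\ge0}\|S_jf\|_{L^p}$, where $S_jf=\big(\int_0^\infty|D_tf-D_{(1+2^{-j})t}f|^2\,\frac{{\rm d}t}{t}\big)^{1/2}$. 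Thus the whole theorem reduces to the dimension-free $L^p$ bounds $\big\|\big(\sum_n|D_{2^n}f|^2\big)^{1/2}\big\|_{L^p}\lesssim\|f\|_{L^p}$ and $\|S_jf\|_{L^p}\lesssim 2^{-j\delta}\|f\|_{L^p}$ for some $\delta>0$.

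\emph{Square functions, and the main obstacle.} On $L^2$ these are immediate from Plancherel: $\sum_n|\mu_G(2^n\xi)|^2\lesssim1$ follows from $|\mu_G(\eta)|\lesssim\min(|\eta|^4,|\eta|^{-1/2})$, while substituting $u=t|\xi|$ and using $|\mu_G(u\omega)-\mu_G((1+2^{-j})u\omega)|\lesssim\min(2^{-j}|u|^4,2^{-j},|u|^{-1/2})$ gives $\|S_jf\|_{L^2}\lesssim(1+j)^{1/2}2^{-j}\|f\|_{L^2}$; both constants are dimension-free, which already proves the theorem at $p=2$. The remaining and much harder step is to obtain these square-function bounds on $L^p$ with $p$ in the stated range and constants independent of $d$. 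Plain Calder\'on--Zygmund theory is unavailable, since the kernel of $D_t$ contains $\mathbf{1}_G/|G|$, whose regularity is dimension-dependent; instead one must exploit that away from the transitional window $t|\xi|\sim1$ the multiplier $\mu_G$ decays fast ($|\xi|^4$ at the origin, $|\xi|^{-1/2}$ at infinity), so that $D_{2^n}$ there behaves like a single Littlewood--Paley block and the heat comparison is essentially lossless, whereas inside the window $t|\xi|\sim1$ — where only the Bourgain decay $|m_G(t\xi)|\lesssim(t|\xi|)^{-1/2}$ is at hand, with no smoothing — one must argue directly, interpolating the $L^2$ information against Bourgain's dimension-free $L^p$ maximal inequality for $M_t^G$, which holds for $p>3/2$. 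This is the technical heart of the argument: the lower endpoint $3/2$ is precisely the one forced by the $|\xi|^{-1/2}$ decay, exactly as in the dimension-free maximal theorem, and the upper endpoint $4$ is the natural limit of the interpolation of these oscillation/square-function estimates at the transitional scales.
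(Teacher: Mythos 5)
Your overall architecture — compare with a nice semigroup, peel off long variations, reduce short variations to square-function estimates, handle $L^2$ by Plancherel and then push to other $p$ by interpolation against a dimension-free maximal estimate — is the right skeleton and matches the paper's strategy (the paper uses the Poisson semigroup rather than the heat semigroup, but that is a cosmetic choice). However, there are two substantive problems.

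First, the multiplier bound you invoke is misstated. Bourgain's dimension-free decay for the Fourier transform of an isotropic convex body is $|m_G(\xi)|\lesssim (L|\xi|)^{-1}$, not $|\xi|^{-1/2}$; see Proposition \ref{prop:1}. Consequently your narrative that ``the lower endpoint $3/2$ is precisely the one forced by the $|\xi|^{-1/2}$ decay'' does not track the actual mechanism. In the paper, $3/2$ comes from Carbery's complex-interpolation bound $\|T_{(\xi\cdot\nabla)^\alpha m}\|_{L^p\to L^p}\lesssim_p 1$ at $\alpha=2-2/p$, combined with the requirement $\alpha>1/p$ from the fractional-derivative characterization of $V_p$ (Proposition \ref{prop:chVr} and Lemma \ref{lem:7}); the inequality $2-2/p>1/p$ is exactly $p>3/2$. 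And the upper bound $4$ is produced concretely: the $L^p$ estimate \eqref{eq:59} for the equal-spacing square function loses $2^{l/2}$, the $L^2$ estimate \eqref{eq:56} gains $2^{-(1-\varepsilon)l/2}$, and interpolating these two the net exponent is nonnegative precisely for $p<4$.

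Second, and more seriously, the part you label ``the technical heart'' — proving the $L^p$ bound on the short-variation square functions for $p\neq2$ with constants independent of $d$ — is simply left unproved, and the sentence-long sketch you give (``interpolate the $L^2$ information against Bourgain's $L^p$ maximal inequality'') is not a valid mechanism on its own. Interpolating a scalar maximal bound against a Hilbert-space-valued $L^2$ bound does not yield the needed $L^p(\ell^2)$ square-function bound without an intermediate ladder. The paper builds exactly this ladder: Lemma \ref{lem:6} converts the short variation on each dyadic block into a sum over equally spaced subdivisions, which interacts well with Plancherel on $L^2$ and with the lacunary maximal function on $L^p$; for $p\in(2,4)$ a Littlewood--Paley interpolation then closes the argument, while for $p\in(3/2,2]$ one needs the almost-orthogonality principle of Proposition \ref{prop:2}, whose hypothesis \eqref{eq:33} (a uniform $V_p$ bound on dyadic blocks, a strictly stronger input than the maximal bound) is what ultimately invokes the fractional-derivative machinery of Proposition \ref{prop:chVr}, Lemma \ref{lem:7}, and the Carbery estimate \eqref{eq:54}. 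None of this appears in your proposal, so as written there is a genuine gap in the core step, and the endpoints of the $p$-range are not derived.
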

The range for the parameter $p$ in Theorem \ref{thm:6} can be improved if we consider only long $r$-variations.
Namely, we have the following lacunary variant of Theorem \ref{thm:6}.
\begin{theorem}
  \label{thm:3}
  Let $p\in(1, \infty)$ and $r\in(2, \infty)$. Then there exists a
  constant $C_{p, r}>0$ independent of the dimension $d\in\NN$ and such that for every symmetric convex body $G\subset\RR^d$ 
 we have
  \begin{align}
   \label{eq:64}
    \big\|V_r\big(M_{2^n}^Gf: n\in\ZZ\big)\big\|_{L^p}\le C_{p, r}\|f\|_{L^p}
  \end{align}
for all  $f\in L^p(\RR^d)$.
  \end{theorem}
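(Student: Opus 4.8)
The plan is to derive Theorem~\ref{thm:3} by combining the dimension-free $L^p$ ($1<p<\infty$) bound for the lacunary maximal operator $f\mapsto\sup_{n\in\ZZ}\abs{M_{2^n}^Gf}$ with an $L^2$ estimate obtained on the Fourier transform side, and then assembling these inputs through the standard numerical inequalities relating $r$-variations for $r>2$ to oscillation-type square functions, in the spirit of \cite{JSW} and \cite{MTS1}.

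\emph{Normalisation and multiplier bounds.} For every invertible linear map $L$ one checks directly that $M_t^{LG}f=\big(M_t^{G}(f\circ L)\big)\circ L^{-1}$, so \eqref{eq:64} for a general symmetric convex body is equivalent, with the same constant, to the same inequality for that body put in isotropic position; hence we may assume $A_G:=\frac1{\abs G}\int_G y\otimes y\,{\rm d}y=c_G^2\,\mathrm{Id}$ (this only lightens notation). On the Fourier side $\widehat{M_t^Gf}(\xi)=\mathfrak m_t^G(\xi)\widehat f(\xi)$ with $\mathfrak m_t^G(\xi)=\frac1{\abs G}\int_Ge^{-2\pi it\,\xi\cdot y}\,{\rm d}y$, which is real because $G$ is symmetric. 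From $0\le1-\cos\theta\le\theta^2/2$ one gets the dimension-free bound $0\le1-\mathfrak m_t^G(\xi)\lesssim\min\{1,\,t^2\langle A_G\xi,\xi\rangle\}$, while the convexity of $G$ yields the dimension-free decay $\abs{\mathfrak m_t^G(\xi)}\lesssim\big(t^2\langle A_G\xi,\xi\rangle\big)^{-\kappa}$ for an absolute $\kappa>0$ (the Carbery--Bourgain estimate for symmetric convex bodies). Writing $n_0(\xi)$ for the integer nearest to $-\tfrac12\log_2\langle A_G\xi,\xi\rangle$, these two bounds give $\sum_{n\in\ZZ}\abs{\mathfrak m_{2^{n+1}}^G(\xi)-\mathfrak m_{2^n}^G(\xi)}^2\le C$ uniformly in $d,G,\xi$, with the $\ell^2$ mass concentrated --- up to geometric tails --- on the $O(1)$ scales $n$ near $n_0(\xi)$.

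\emph{Reduction to oscillations.} For $r>2$ the numerical inequalities of \cite{JSW} (see also \cite{MTS1}) reduce \eqref{eq:64} to the estimates
\[
\big\|\mathcal{O}_{l}f\big\|_{L^p}\le C_p\,2^{-\delta_p l}\,\|f\|_{L^p},\qquad l\ge0,\ 1<p<\infty,
\]
with $C_p,\delta_p>0$ independent of $d,G,l$, where $\mathcal{O}_{l}f$ is the $l$-th oscillation of $(M_{2^n}^Gf)_{n\in\ZZ}$: loosely, the $\ell^2(\ZZ)$-norm over the blocks $[k2^l,(k+1)2^l)$ of $\sup_{k2^l\le m<(k+1)2^l}\abs{M_{2^m}^Gf-M_{2^{k2^l}}^Gf}$, together with the contribution of the sampled family $(M_{2^{k2^l}}^Gf)_{k\in\ZZ}$. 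Indeed $V_r(M_{2^n}^Gf:n\in\ZZ)\lesssim_r\big(\sum_{l\ge0}(\mathcal{O}_{l}f)^2\big)^{1/2}$, and once the displayed bound is known the geometric series in $l$ sums.

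\emph{The two bounds, and the main obstacle.} On $L^2$, Plancherel together with the concentration of $\sum_n\abs{\mathfrak m_{2^{n+1}}^G-\mathfrak m_{2^n}^G}^2$ near $n\approx n_0(\xi)$ gives $\|\mathcal{O}_{l}f\|_{L^2}\lesssim2^{-l/2}\|f\|_{L^2}$, since in any window of $2^l$ consecutive dyadic scales all but $O(1)$ of them carry only a geometrically small part of the multiplier's total variation. On $L^p$, $1<p<\infty$, a Littlewood--Paley decomposition adapted to the frequency scale $\langle A_G\xi,\xi\rangle^{1/2}$ controls $\mathcal{O}_{l}f$, dimension-freely and with at most polynomial growth in $l$, by the $\ell^2$-valued form of the dimension-free lacunary maximal inequality; interpolating the $L^2$ gain against this $L^p$ bound yields the required $2^{-\delta_p l}$ decay for every $p\in(1,\infty)$. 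The crux is precisely this full-range, dimension-free $L^p$ ($p\ne2$) control of the auxiliary square/oscillation functions: since no dimension-free weak-type $(1,1)$ inequality is available for averages over general symmetric convex bodies, interpolation with an endpoint is closed off, and the vector-valued lacunary maximal inequality must itself be obtained dimension-freely --- for $p\in(1,2)$ by comparison with Euclidean-ball averages (for which dimension-free vector-valued bounds are classical) and for $p\in(2,\infty)$ by duality and factorisation through the scalar bound. Finally, the restriction $r>2$ is necessary: already for $d=1$ and the dyadic averages, Lépingle's counterexample shows that the $2$-variation fails to be bounded on any $L^p$.
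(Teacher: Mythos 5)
Your proposed $L^2$ oscillation estimate $\|\mathcal{O}_l f\|_{L^2}\lesssim 2^{-l/2}\|f\|_{L^2}$ is false, and this gap is fatal to the approach. Fix a frequency $\xi$ with $L|\xi|\approx1$. The multipliers $m(2^n\xi)$ undergo an $O(1)$-size transition over $O(1)$ consecutive dyadic scales $n$ near $n=0$, going from $\approx 1$ as $n\to-\infty$ to $\approx0$ as $n\to+\infty$ (this is exactly the "concentration near $n_0(\xi)$" you describe). For the block $[k2^l,(k+1)2^l)$ that contains this transition, the supremum $\sup_{k2^l\le m<(k+1)2^l}|m(2^m\xi)-m(2^{k2^l}\xi)|$ is of order $1$, not $O(2^{-l/2})$, because the oscillation seminorm picks up the \emph{size} of the transition, not its $\ell^2$ mass per scale. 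Hence $\sum_k\big(\sup_m|\ldots|\big)^2=O(1)$ uniformly in $l$, and Plancherel gives $\|\mathcal{O}_lf\|_{L^2}\approx\|f\|_{L^2}$ with no decay. Your own heuristic ("all but $O(1)$ scales carry geometrically small variation") actually proves the opposite of what you want: $O(1)$ scales carrying $O(1)$ variation means the block sup is $O(1)$, so there is no gain as $l\to\infty$, and the sum over $l$ diverges. The reduction $V_r(M_{2^n}^Gf:n\in\ZZ)\lesssim_r(\sum_{l\ge0}\mathcal{O}_l f^2)^{1/2}$ is also not one of the numerical inequalities of \cite{JSW} or \cite{MTS1}; those papers reduce $r$-variations ($r>2$) either to $\lambda$-jump counting functions or, as here, to martingale/semigroup models, and in any case require a gain in $l$ that the lacunary multiplier simply does not provide on its own.

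The paper's proof of Theorem \ref{thm:3} sidesteps this obstruction by a comparison argument rather than by estimating the raw oscillations of $M_{2^n}^Gf$. One writes
\begin{align*}
\big\|V_r\big(M_{2^n}f:n\in\ZZ\big)\big\|_{L^p}\le\big\|V_r\big(P_{L2^n}f:n\in\ZZ\big)\big\|_{L^p}
+\bigg\|\Big(\sum_{n\in\ZZ}\big|M_{2^n}f-P_{L2^n}f\big|^2\Big)^{1/2}\bigg\|_{L^p},
\end{align*}
where $P_t$ is the Poisson semigroup and $L=L(G)$ is the isotropic constant. The point is that the Poisson multiplier $e^{-2\pi L2^n|\xi|}$ undergoes the \emph{same} $O(1)$ transition at the \emph{same} scales as $m(2^n\xi)$, so the difference $m(2^n\xi)-e^{-2\pi L2^n|\xi|}$ decays both as $n\to-\infty$ and as $n\to+\infty$; that is what makes $\sum_n|M_{2^n}f-P_{L2^n}f|^2$ a bona fide square function with a good dimension-free $L^p$ bound (proved by Littlewood--Paley decomposition $f=\sum_jS_jf$, a vector-valued lacunary maximal bound for the off-$L^2$ estimate, and a $2^{-|j|/2}$ gain on $L^2$ from Proposition \ref{prop:1}, then summing by interpolation). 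The $r$-variation of the Poisson semigroup itself is then controlled dimension-freely, for all $p\in(1,\infty)$ and $r>2$, by Rota's dilation theorem together with L\'epingle's martingale inequality, which supplies exactly the "long-scale" control that no Fourier/square-function estimate can give. Without subtracting such a model (a Markovian semigroup or martingale) there is no mechanism to absorb the multiplier's transition, and this is precisely where your proposal breaks down.
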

If we restrict our attention to the balls induced by  small $\ell^q$
norms in $\RR^d$ then we can obtain the full range of $p$'s in Theorem
\ref{thm:6}. To be more precise for
$q\in[1, \infty)$ let us define these balls
\begin{align}
  \label{eq:66}
  \begin{split}
B_q=&\Big\{x=(x_1, \ldots, x_d)\in\RR^d: |x|_q=\Big(\sum_{1\le k\le
  d}|x_k|^q\Big)^{1/q}\le1\Big\}, \qquad \text{ and }\\
B_{\infty}&=\big\{x=(x_1, \ldots, x_d)\in\RR^d: |x|_{\infty}=\max_{1\le k\le d}|x_k|\le1\big\}.
  \end{split}  
\end{align}

Then we have the following theorem.
\begin{theorem}
  \label{thm:7}
  Suppose that $G=B_q$ is one of the balls defined in \eqref{eq:66} for
  some $q\in[1, \infty]$. Let $p\in(1, \infty)$ and $r\in(2, \infty)$. Then there
  exists a constant $C_{p, q, r}>0$ independent of the dimension $d\in\NN$  and such that
  \begin{align}
   \label{eq:67}
    \big\|V_r\big(M_t^Gf: t>0\big)\big\|_{L^p}\le C_{p,q, r}\|f\|_{L^p}
  \end{align}
holds for all  $f\in L^p(\RR^d)$.
  \end{theorem}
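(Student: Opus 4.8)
The plan is to prove Theorem \ref{thm:7} by splitting the variation seminorm into its "long" (lacunary) part and its "short" (between consecutive dyadic scales) part, using the standard decomposition
\[
V_r\big(M_t^Gf: t>0\big)\lesssim V_r\big(M_{2^n}^Gf: n\in\ZZ\big)+\bigg(\sum_{n\in\ZZ}V_2\big(M_t^Gf: t\in[2^n,2^{n+1}]\big)^2\bigg)^{1/2},
\]
valid because $r>2$. The first term on the right is controlled in $L^p$ for all $p\in(1,\infty)$, with a dimension-free constant, by Theorem \ref{thm:3}, so the whole problem reduces to the short-variation square function. For this piece I would \emph{not} try to exploit convexity of $B_q$ directly; instead the idea is to use that for $G=B_q$ the Fourier multiplier $m_t^G(\xi)=\widehat{\ind{G_t}}(\xi)/|G_t|$ has, after a suitable rescaling, strong dimension-free smoothness in $t$ on each dyadic block, because $B_q$ is a product-type body (a genuine product for $q=\infty$, and governed by the one-dimensional profile $e^{-|s|^q}$ via the Fourier transform of $\ind{B_q}$) and the relevant heat-kernel/Bessel-type asymptotics are uniform in $d$.

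The key steps, in order, would be: (1) record the dyadic decomposition above and dispose of the long-variation term via Theorem \ref{thm:3}; (2) for the short $V_2$ square function, use the standard bound
\[
V_2\big(M_t^Gf: t\in[2^n,2^{n+1}]\big)\le |M_{2^n}^Gf| + \bigg(\int_{2^n}^{2^{n+1}}\Big|\frac{d}{dt}M_t^Gf\Big|^2\,t\,dt\bigg)^{1/2}\bigg(\int_{2^n}^{2^{n+1}}\frac{dt}{t}\bigg)^{1/2}
\]
so that, after summing in $n$ and throwing away the harmless contribution of the lacunary maximal function (again dimension-free for $B_q$ by known results of Bourgain, Carbery, Müller--Stein, which I would cite rather than reprove), it suffices to bound the $L^p$ norm of the continuous square function $g(f)=\big(\int_0^\infty |t\,\partial_t M_t^Gf|^2\,dt/t\big)^{1/2}$; (3) prove the $L^2$ bound $\|g(f)\|_{L^2}\lesssim \|f\|_{L^2}$ by Plancherel, i.e. show $\int_0^\infty |t\,\partial_t m_t^G(\xi)|^2\,dt/t\lesssim 1$ uniformly in $d$ and $\xi$; here one rescales $t\mapsto t$ so that $m_t^G(\xi)=m_1^G(t\xi)$ and reduces to showing $\int_0^\infty |s\,\partial_s[m_1^G(s\xi)]|^2\,ds/s\lesssim 1$, which follows from (a) $|m_1^G(\xi)|\lesssim 1$, (b) $|m_1^G(\xi)-1|\lesssim \min(|\xi|_q^2,1)$ or a suitable low-frequency estimate uniform in $d$, and (c) a uniform-in-$d$ decay $|m_1^G(\xi)|\lesssim |\xi|_q^{-c}$ away from the origin — all of which are available for $B_q$ from the explicit one-parameter structure; (4) upgrade to $L^p$ for $p\ne 2$ by a Littlewood--Paley / Rubio de Francia type argument, or more simply by interpolating the $L^2$ bound with a weak-type or $L^p$ bound for a maximal/Hardy--Littlewood majorant (again dimension-free for $B_q$), exactly as in the classical dimension-free maximal theorem; (5) assemble the pieces.

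The main obstacle I anticipate is step (3)–(4): obtaining the \emph{dimension-free} $L^p$ (not just $L^2$) bound for the short-variation square function associated with $B_q$. The $L^2$ estimate is essentially a calculus exercise with the uniform asymptotics of $\widehat{\ind{B_q}}$, but passing to $p\ne 2$ with a constant independent of $d$ cannot be done by naive vector-valued Calderón--Zygmund theory (the kernels live in $\RR^d$ and the usual constants blow up). The resolution should be to follow the Bourgain/Carbery method used for the dimension-free maximal inequality for $B_q$: decompose $m_t^G$ dyadically in frequency relative to $t$, isolate a piece that behaves like a (dimension-free) approximation to the identity — handled by the dimension-free maximal bound for $B_q$ — and a remainder that enjoys good $L^2$ decay which, after square-function summation, is interpolated with a crude $L^p$ bound. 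Making the frequency decomposition and its $L^2$ gains uniform in $d$ for all $q\in[1,\infty]$ simultaneously (the cases $q=1$, $q=2$, $q=\infty$ and general $q$ each having a slightly different Fourier profile) is where the real work lies; the variational structure itself contributes nothing beyond the initial long/short splitting.
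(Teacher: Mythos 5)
Your long/short splitting and your $L^2$ Plancherel step are both sound and match the paper, but the central piece --- the dimension-free $L^p$ ($p\neq 2$) bound for the short-variation square function over $B_q$ --- is hand-waved at exactly the point where the real argument has to live, and the route you sketch would not close the gap. Reducing the short variation to the continuous square function $g(f)=\big(\int_0^\infty |t\,\partial_t M_t^Gf|^2\,dt/t\big)^{1/2}$ via Lemma~\ref{lem:2} is indeed what the paper does in the Appendix, but \emph{only} for $G=B_2$: there the $L^p$ bound for $g(f)$ is obtained by tensoring a low-dimensional spherical square function over $\RR^{d_0}$ with the identity and averaging over $\mathcal O(d)$, which works precisely because the Euclidean ball is rotation-invariant. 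For $q\neq 2$ the group $\mathcal O(d)$ does not preserve $B_q$, so this rotation/tensor trick is unavailable, and there is no known dimension-free $L^p$ estimate for the full $g$-function associated to $B_q$; you cannot ``follow the Bourgain/Carbery method'' to produce one, because that method (subordination via the operators $\mathcal P^\alpha_u$) gives maximal and difference estimates, not vertical square-function estimates. Also, the extraneous $|M_{2^n}^Gf|$ in your displayed block bound would, after summing in $n$, produce $\sum_n|M_{2^n}^Gf|^2$, which is \emph{not} controlled by the lacunary maximal function.

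What the paper actually does for $B_q$ is quite different and avoids the $g$-function entirely. It first discretizes the short $V_2$ variation on each dyadic block using the numerical inequality of Lemma~\ref{lem:6}, turning it into a double sum of finite differences $M_{2^n+2^{n-l}(k+1)}-M_{2^n+2^{n-l}k}$. The $L^2$ bound \eqref{eq:56} then follows by Plancherel plus the multiplier estimates of Proposition~\ref{prop:1} with a gain $2^{-(1-\e)l/2}2^{-\e|j|/4}$ in both the scale $l$ and the Littlewood--Paley index $j$. For $p>2$ this is interpolated against an $L^p$ bound \eqref{eq:39} that depends crucially on Lemma~\ref{lem:3}, the H\"older estimate $\|M_{t+h}f-M_tf\|_{L^p}\lesssim(h/t)^\alpha\|f\|_{L^p}$ with $\alpha>1/2$; that lemma is obtained from the subordination identity \eqref{eq:99} and the M\"uller/Bourgain bound \eqref{eq:98'} on $T_{(\xi\cdot\nabla)^\alpha m}$, and it is fed into a vector-valued interpolation between $L^{q}(\ell^{q})$ and $L^{q}(\ell^{\infty})$ by duality. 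For $p<2$, the $g$-function route is replaced by the almost-orthogonality principle of Proposition~\ref{prop:2} (a variational adaptation of Carbery's argument), and the key hypothesis \eqref{eq:33} is verified via Lemma~\ref{lem:7}, which in turn rests on the fractional-derivative characterization of $r$-variation in Proposition~\ref{prop:chVr} --- this step genuinely uses $V_p$ for $p<2$, which has no analogue in your proposal. In short, the correct architecture is discretization + H\"older continuity + duality/interpolation for $p>2$, and almost orthogonality + fractional derivatives for $p<2$, neither of which appears in your sketch; plugging these in is not a matter of ``real work'' being postponed but requires two new lemmas that your plan does not anticipate.
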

The range for parameter $r\in(2, \infty)$ in Theorem \ref{thm:6}, Theorem \ref{thm:3} and Theorem \ref{thm:7} is sharp, see \cite{JSW}.

\medskip

  Dimension dependent versions of Theorems \ref{thm:6} and \ref{thm:3}, with sharp ranges of parameters $p\in(1, \infty)$ and
  $r\in(2, \infty)$, follow from \cite[Theorem A.1]{MTS1}. Also related to our results is the paper of Jones,
  Seeger and Wright \cite{JSW}. Especially, \cite[Theorem 1.4]{JSW},
  where $L^p$ bounds for $r$-variations associated with the spherical
  averages have been established. Their estimates, however, depend on
  the dimension.

For a symmetric convex body $G\subset\RR^d$, $x\in\RR^d$ and $f\in L_{{\rm loc}}^1(\RR^d)$  we set
$$M_{\star}^Gf(x)=\sup_{t>0} M_t^G f(x).$$
The present paper may be thought of as a variational counterpart for a
series of articles establishing dimension-free bounds on $L^p(\RR^d)$
for $M_{\star}^G$ and various symmetric convex bodies $G.$ The
starting point of this line of research was the work of the third
author \cite{SteinMax}, where he obtained the dimension-free bounds on
$L^p(\RR^d),$ with $p\in(1, \infty],$ for $M_{\star}^{B_2},$ where
$B_2$ is the Euclidean ball. The fully corresponding result for
$r$-variations is our Theorem \ref{thm:7} for $G=B_2$, see also
Theorem \ref{thm:ball} in the Appendix. Then, the first author proved
a dimension-free estimate on $L^2(\RR^d)$ for $M_{\star}^G$ when $G$
is a general symmetric convex body, see \cite{B1}. This result has
been independently extended by the first author \cite{B2} and Carbery
\cite{Car1} to $L^p(\RR^d)$ for $p\in(3/2, \infty].$ An
$r$-variational counterpart of \cite{B1}, \cite{B2}, and \cite{Car1}
is our Theorem \ref{thm:6}. Contrary to these papers we also have to
restrict $p$ from above by $4.$ Removing this restriction does not
seem easy, as we do not have a natural endpoint estimate to
interpolate. In \cite{B1}, \cite{B2}, and \cite{Car1}, such an
estimate was the trivial bound for $M_{\star}^G$ on
$L^{\infty}(\RR^d)$. Next, imposing a certain geometric constraint,
M\"uller \cite{Mul1} enlarged the range of $p$'s, for which one has
dimension-free bounds on $L^p(\RR^d)$ for $M_{\star}^G$, to all
$p\in(1, \infty].$ His result includes the cases when $G=B_q$ with
$q\in[1, \infty)$. The most recent development in the study of
dimension-free bounds for averaging operators is \cite{B3}, in which
the first author provided dimension-free estimates on $L^p(\RR^d),$
with $p\in(1, \infty]$ for the cubes $G=B_{\infty}.$
Theorem \ref{thm:7} is a variational counterpart of  the maximal results from \cite{Mul1} and \cite{B3} for
$G=B_q$ with $q\in[1, \infty]$.

Now, let us  describe M\"uller's
result more precisely. Let $G$ be a symmetric convex body in $\RR^d$. By the argument
from \cite{B1}, there exists an invertible linear transformation $U\in{\rm Gl}(\RR^d)$ and a constant
$L(G)>0$ such that
\begin{align}
  \label{eq:84}
  {\rm Vol}_{d}\,U(G)=1 \quad \text{ and } \quad \int_{U(G)}(\xi \cdot x)^2{\rm d}x=L(G)^2
\end{align}
for all unit vectors $\xi\in\mathbb S^{d-1}=\{y\in\RR^d: |y|=1\}$,
where $\xi\cdot x=\langle\xi, x\rangle$ is the standard inner product
in $\RR^d$ and $|y|=|y|_2$ is the Euclidean  norm in $\RR^d$ as in
\eqref{eq:66} with $q=2$.  It is not difficult to note that $L(G)$ is
determined uniquely by \eqref{eq:84}, moreover $U$ is determined
uniquely up to multiplication from the left by an orthogonal
transformation of $\RR^d$.

Similarly as in \cite{B1}, for every $\xi\in\mathbb S^{d-1}$ and $u\in\RR$, we define 
\begin{align}
  \label{eq:86}
  \varphi_{\xi}(u)={\rm Vol}_{d-1}\big(\{x\in U(G): (\xi \cdot x)=u\}\big).
\end{align}
Moreover, we define the constants
\begin{align}
  \label{eq:95}
\sigma(G)^{-1}&=\max\big\{\varphi_{\xi}(0):\xi\in\mathbb S^{d-1}\big\},  \qquad \text{and}\\
\label{eq:96}
  Q(G)&=\max\big\{{\rm Vol}_{d-1}(\pi_{\xi}(U(G))): \xi\in\mathbb S^{d-1}\big\},
\end{align}
where $\pi_{\xi}:\RR^d\to\xi^{\perp}$ denotes the orthogonal projection
of $\RR^d$ onto the hyperplane perpendicular to $\xi$.

We note that $\sigma(V(G))=\sigma(G)$ and $Q(V(G))=Q(G)$ for any
$V\in{\rm Gl}(\RR^d)$. Moreover, in \cite{B1} it was proven that
there is a universal constant $a>0$ such that
$a^{-1}L(G)\le \sigma(G)\le aL(G)$.

Using these two linear invariants $\sigma(G)$ and $Q(G)$ M\"uller 
estimated $\|M_{\star}^G\|_{L^p\to L^p}$. Namely, the main result of
\cite{Mul1} states that for every $p\in(1, \infty]$ and for every
symmetric convex body $G\subset\RR^d$ there is a constant
$C(p,\sigma(G), Q(G))>0$ independent of the dimension $d$ such that
\begin{align}
  \label{eq:100}
  \|M_{\star}^G\|_{L^p\to L^p}\le C(p,\sigma(G), Q(G)).
\end{align}
In other words $\|M_{\star}^G\|_{L^p\to L^p}$ may depend on
$\sigma(G)$ and $Q(G)$, but not explicitly on the dimension $d$. In
fact \cite{B2} and \cite{Car1} show that for $p\in(3/2, \infty]$ the
norm $\|M_{\star}^G\|_{L^p\to L^p}$ can be even chosen independently
of $\sigma(G)$ and $Q(G)$. Using \eqref{eq:100} M\"uller showed that
$\|M_{\star}^G\|_{L^p\to L^p}$ is independent of the dimension for all
$G=B_q$ with $q\in[1, \infty)$, since $\sigma(B_q)$ and $Q(B_q)$ can
be explicitly computed and they are independent of $d$. However, for
the cubes $G=B_{\infty}$ it turned out that $\sigma(B_{\infty})$ is
independent of the dimension, but $Q(B_{\infty})=\sqrt{d}$. The
question about the dimension-free bounds for the cubes in $\RR^d$ for
$p\in(1, 3/2]$ was left open until \cite{B3}, where the first author
significantly refined and extended the methods from \cite{Mul1}. Using the product nature of the cubes he showed that
for every $p\in(1, \infty]$ there is a constant $C_p>0$ independent of the dimension $d$ such that
\begin{equation*}
  \|M_{\star}^{B_{\infty}}\|_{L^p\to L^p}\le C_p.
\end{equation*}
The question whether $\|M_{\star}^G\|_{L^p\to L^p}$ with $p\in(1, 3/2]$ can be controlled
by a constant independent of the dimension for  general symmetric convex
bodies remains still open. The situation is even more complicated for $r$-variational estimates.  
A question which we are unable to answer is what happens not only when
$p\in(1, 3/2]$, but also for $p\in[4, \infty)$ in the case of general symmetric convex
bodies. However the method of the proof of Theorem \ref{thm:7}, in view
of M\"uller's result, allows us to deduce that for every
$p\in(1, \infty)$, for every $r\in(2, \infty)$ and for every
symmetric convex body $G\subset\RR^d$  there is a constant 
$C(p,r,\sigma(G), Q(G))>0$ independent of $d$ such that
\begin{align}
  \label{eq:107}
  \sup_{\|f\|_{L^p}\le1}    \big\|V_r\big(M_t^Gf: t>0\big)\big\|_{L^p}\le C(p,r,\sigma(G), Q(G)).
\end{align}
Theorem \ref{thm:6} and Theorem \ref{thm:7} give some partial evidence
to support the conjecture which asserts that for all $p\in(1, \infty)$ and
$r\in(2, \infty)$ and for every general symmetric convex body
$G\subset\RR^d$ the left-hand side of \eqref{eq:107} can be controlled
by a constant independent of the dimension and the linear invariants
$\sigma(G)$ and $Q(G)$.

\medskip

Let us briefly describe the strategy for proving our main results. The
first step is splitting the consideration into long and short
variations, see \eqref{eq:21}. The long variations are treated in Theorem \ref{thm:3} by
appealing to known dimension-free estimates for $r$-variations of the
Poisson semigroup (see \cite[Theorem 3.3]{JR1}) together with a square
function estimate. The square function estimate is proved by using a
dimension-free Littlewood--Paley theory, which allows us to prove very
good estimates on $L^2$ and acceptable estimates on $L^p.$ Then
interpolation establishes Theorem \ref{thm:3}.  In view of Theorem
\ref{thm:3} in order to prove Theorems \ref{thm:6} and \ref{thm:7} it
is enough to consider short variations. To this end we also use a
dimension-free Littlewood--Paley theory and interpolation between $L^2$
and $L^p$ bounds. The analysis of short variations breaks basically
into two cases, whether $p\in[2, \infty]$ or $p\in(1, 2]$. In the
first case we rely on the numerical inequality, Lemma \ref{lem:6},
which reduces estimates for $r$-variations to the situation where the
division intervals over which differences are taken are all of the
same size. The case $r=2$ of this is particularly suited to an
application of the Fourier transform. On the other hand, when
$p\in(1, 2]$ we use an orthogonality principle (an $r$-variation
adaptation of an idea in \cite{Car1}), together with an appropriate
characterization of $r$-variation in terms of fractional derivatives,
given in Proposition \ref{prop:chVr}. Of course fractional derivatives
had already occurred in \cite{Car1} and \cite{Mul1}, as well as the
earlier proof of the spherical maximal theorem for $d\ge3$. It should
be pointed out that for $p\in(1, 2)$ it is essential that we can use
$r$-variational estimates for $r\in(1, 2)$.

\subsection{Applications} The results from the previous paragraph have an ergodic theoretical interpretation. Namely,
let $(X, \mathcal B, \mu)$ be a $\sigma$-finite measure space with
families of commuting and measure-preserving transformations
$(T_1^t: t\in\RR)$, $(T_2^t: t\in\RR), \ldots, (T_d^t: t\in\RR)$, which map
$X$ to itself. For every symmetric convex body $G\subset\RR^d$ for every
$x\in X$ and $f\in L^1(X, \mu)$ we define the ergodic
Hardy--Littlewood averaging operator by setting
\begin{align}
  \label{eq:22}
  A_t^Gf(x)=\frac{1}{|G_t|}\int_{G_t}f\big(T_1^{y_1}\circ T_2^{y_2}\circ\ldots\circ T_d^{y_d}x\big){\rm d}y_1{\rm d}y_2\ldots{\rm d}y_d.
\end{align}
For this operator we also have dimension free $r$-variational estimates.
\begin{theorem}
\label{thm:5}
 Let $p\in(3/2, 4)$ and $r\in(2, \infty).$  Then there exists a
  constant $C_{p, r}'>0$ independent of the dimension $d\in\NN$ and such that for every symmetric convex body $G\subset\RR^d$ 
 we have
  \begin{align}
\label{eq:43}
    \big\|V_r\big(A_{t}^Gf: t>0\big)\big\|_{L^p(X, \mu)}\le C_{p, r}'\|f\|_{L^p(X, \mu)}
  \end{align}
  for all  $f\in L^p(X, \mu)$. Moreover, if we  consider only long variations, then \eqref{eq:43} remains true for all 
  $p\in(1, \infty)$ and $r\in(2, \infty)$ and we have
 \begin{align}
\label{eq:60}
    \big\|V_r\big(A_{2^n}^Gf: n\in\ZZ\big)\big\|_{L^p(X, \mu)}\le C_{p, r}'\|f\|_{L^p(X, \mu)}.
  \end{align} 
\end{theorem}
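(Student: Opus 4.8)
The plan is to deduce Theorem~\ref{thm:5} from its Euclidean counterparts, Theorem~\ref{thm:6} and Theorem~\ref{thm:3}, by a Calder\'on transference argument; the point to stress is that this produces $C_{p,r}'$ equal to the dimension-free constants $C_{p,r}$ of those theorems, so no new dependence on the dimension $d$ (or on the body $G$) is introduced. Writing $T^y=T_1^{y_1}\circ T_2^{y_2}\circ\ldots\circ T_d^{y_d}$ for $y=(y_1,\ldots,y_d)\in\RR^d$, commutativity of the flows makes $(y,x)\mapsto T^yx$ a measure-preserving $\RR^d$-action on $X$ (jointly measurable, as is implicit in \eqref{eq:22}), and \eqref{eq:22} reads $A_t^Gf(x)=|G_t|^{-1}\int_{G_t}f(T^yx)\,{\rm d}y$. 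First I would note that for $\mu$-a.e.\ $x$ the function $F_x(y):=f(T^yx)$ belongs to $L^p_{\rm loc}(\RR^d)$ — by Tonelli and invariance, $\int_X\int_{[-N,N]^d}|f(T^yx)|^p\,{\rm d}y\,{\rm d}\mu(x)=(2N)^d\|f\|_{L^p(X,\mu)}^p<\infty$ — and that $t\mapsto A_t^Gf(x)$ is continuous for $\mu$-a.e.\ $x$. Hence $V_r(A_t^Gf(x):t\in Z)$ is a well-defined measurable function of $x$ for $Z=(0,\infty)$ and for $Z=\{2^n:n\in\ZZ\}$, and in each case it equals the supremum of $V_r(A_t^Gf(x):t\in Z')$ over finite subsets $Z'\subset Z$; by monotone convergence it therefore suffices to bound $\|V_r(A_t^Gf:t\in Z')\|_{L^p(X,\mu)}$ uniformly over finite sets $Z'$.

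The core step is the transference itself. Fix a finite $Z'$, put $b=\max Z'<\infty$, and choose $R=R(G)$ with $G\subset[-R,R]^d$, so that $G_t=tG\subset[-Rb,Rb]^d$ for all $t\in Z'$. For $N\in\NN$ set $\widetilde F_x:=F_x\cdot\ind{[-(N+Rb),\,N+Rb]^d}$. If $|y|_\infty\le N$, $t\in Z'$ and $z\in G_t$, then $y-z\in[-(N+Rb),N+Rb]^d$, and using $T^{z+y}=T^zT^y$, the substitution $z\mapsto-z$, and the symmetry $G_t=-G_t$ we get
\[
A_t^Gf(T^yx)=\frac{1}{|G_t|}\int_{G_t}F_x(y+z)\,{\rm d}z=\frac{1}{|G_t|}\int_{G_t}\widetilde F_x(y-z)\,{\rm d}z=M_t^G\widetilde F_x(y),
\]
so that $V_r\big(A_t^Gf(T^yx):t\in Z'\big)=V_r\big(M_t^G\widetilde F_x(y):t\in Z'\big)$ for $|y|_\infty\le N$. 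Raising this to the $p$-th power, integrating in $y$ over $[-N,N]^d$, enlarging the domain of integration to $\RR^d$, and applying Theorem~\ref{thm:6} (for \eqref{eq:43}) or Theorem~\ref{thm:3} (for \eqref{eq:60}) to $\widetilde F_x\in L^p(\RR^d)$ gives, for $\mu$-a.e.\ $x$,
\[
\int_{[-N,N]^d}V_r\big(A_t^Gf(T^yx):t\in Z'\big)^p\,{\rm d}y\le C_{p,r}^p\int_{[-(N+Rb),\,N+Rb]^d}|f(T^yx)|^p\,{\rm d}y.
\]
Integrating in $x$ against ${\rm d}\mu$, using Tonelli, and invoking measure-invariance of each $T^y$ (so that, for each fixed $y$, $\int_X|f(T^yx)|^p\,{\rm d}\mu(x)=\|f\|_{L^p(X)}^p$ and $\int_XV_r(A_t^Gf(T^yx):t\in Z')^p\,{\rm d}\mu(x)=\|V_r(A_t^Gf:t\in Z')\|_{L^p(X)}^p$) yields
\[
(2N)^d\,\big\|V_r\big(A_t^Gf:t\in Z'\big)\big\|_{L^p(X)}^p\le C_{p,r}^p\,\big(2(N+Rb)\big)^d\,\|f\|_{L^p(X)}^p.
\]
Dividing by $(2N)^d$ and letting $N\to\infty$, the factor $(1+Rb/N)^d$ tends to $1$, so $\|V_r(A_t^Gf:t\in Z')\|_{L^p(X)}\le C_{p,r}\|f\|_{L^p(X)}$; taking the supremum over finite $Z'\subset(0,\infty)$ gives \eqref{eq:43}, and over finite $Z'\subset\{2^n:n\in\ZZ\}$ gives \eqref{eq:60}.

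I do not expect a genuine obstacle here: the whole argument is a transference of the already-established dimension-free Euclidean inequalities, so their constants carry over verbatim, with no new dependence on $d$ or on $G$. The two points demanding attention are the standing measurability hypotheses on the action of the flows — needed so that $F_x$ is a bona fide element of $L^p_{\rm loc}(\RR^d)$ for $\mu$-a.e.\ $x$ and so that the iterated integrals above may be freely exchanged — and the reduction, via continuity of $t\mapsto A_t^Gf(x)$ and monotone convergence, to finite (hence bounded) index sets $Z'$, which is precisely what legitimizes the passage from $F_x$ to the locally truncated $\widetilde F_x$.
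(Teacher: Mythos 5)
Your proof is correct and follows essentially the same Calder\'on transference strategy as the paper's Proposition~\ref{prop:3}: truncate the transferred function, identify $A_t^G f(T^{y}x)$ with $M_t^G$ applied to the truncation, apply the Euclidean dimension-free bound, integrate over $x$ using measure invariance, and pass to a limit so that the ratio of the two truncation volumes tends to $1$. The one cosmetic difference is that the paper works with scaled copies $G_R$ and $G_{R(1+\varepsilon/d)}$ of the body itself, restricts to $t<R\varepsilon/d$ so the local computation is valid, and exploits $(1+\varepsilon/d)^d\le e^\varepsilon$ before letting $R\to\infty$ and then $\varepsilon\to0^+$; you instead use cubes $[-N,N]^d\subset[-(N{+}Rb),N{+}Rb]^d$, first reduce to a finite (hence bounded) index set $Z'$ via monotone convergence, and let $N\to\infty$ so that $(1+Rb/N)^d\to1$ for fixed $d$, $R$, $b$. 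Both routes carry the dimension-free constant $C_{p,r}$ over unchanged.
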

Theorem \eqref{thm:5} is an ergodic counterpart of Theorem \ref{thm:6}
and Theorem \ref{thm:3}. If $G=B_q$ for some $q\in[1, \infty]$ then we
obtain sharp ranges of exponents with respect to the parameters $p$ and
$r$ for the operator \eqref{eq:22}. Namely, we can prove an analogue of
Theorem \ref{thm:7}.
\begin{theorem}
\label{thm:2}
  Suppose that $G=B_q$ is one of the balls defined in \eqref{eq:66} for
  some $q\in[1, \infty]$. Let $p\in(1, \infty)$ and $r\in(2, \infty)$. Then there
  exists a constant $C_{p, q, r}'>0$ independent of the dimension $d\in\NN$  and such that
  \begin{align}
\label{eq:63}
    \big\|V_r\big(A_t^Gf: t>0\big)\big\|_{L^p(X, \mu)}\le C_{p,q, r}'\|f\|_{L^p(X, \mu)}
  \end{align}
holds for all  $f\in L^p(X, \mu)$.
  \end{theorem}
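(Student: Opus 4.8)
The plan is to deduce Theorem~\ref{thm:2} from its Euclidean counterpart Theorem~\ref{thm:7} via the Calder\'on transference principle; the only real point is to carry out the transference so that no dimension-dependent constant is introduced.

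First I would record the transference identity. Writing $T^y=T_1^{y_1}\circ T_2^{y_2}\circ\cdots\circ T_d^{y_d}$ for $y=(y_1,\dots,y_d)\in\RR^d$, the commuting measure-preserving flows $(T_i^t)_{t\in\RR}$ assemble into a measure-preserving action of $\RR^d$ on $(X,\mathcal B,\mu)$. For $f\in L^p(X,\mu)$ and $\mu$-a.e.\ $x\in X$ set $F_x(y)=f(T^yx)$, which is locally integrable on $\RR^d$. Using the symmetry $G_t=-G_t$ one verifies the pointwise identity $A_t^{G}f(T^zx)=M_t^{G}F_x(z)$ for $z\in\RR^d$ and $t>0$, and consequently $V_r\big(A_t^{G}f(T^zx):t\in Z\big)=V_r\big(M_t^{G}F_x(z):t\in Z\big)$ for every $Z\subseteq(0,\infty)$.

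Next I would localize in space and time simultaneously. Fix $N\in\NN$ and put $\rho=N/d$. Since $G=B_q$ with $q\in[1,\infty]$ obeys $(B_q)_t\subseteq[-t,t]^d$, for $z\in[-N,N]^d$ and $0<t\le\rho$ the average $M_t^{B_q}F_x(z)$ depends only on the restriction of $F_x$ to the cube $Q_N:=[-(N+\rho),N+\rho]^d$; hence $M_t^{B_q}F_x(z)=M_t^{B_q}(F_x\ind{Q_N})(z)$ there. Applying Theorem~\ref{thm:7} to $F_x\ind{Q_N}$, restricting the integral in the conclusion to $z\in[-N,N]^d$ and the time range to $(0,\rho]$, integrating over $x\in X$, and using Tonelli together with the $\mu$-invariance of $T^z$ and of $T^y$, I would obtain
\[
\big|[-N,N]^d\big|\,\big\|V_r\big(A_t^{B_q}f:0<t\le N/d\big)\big\|_{L^p(X,\mu)}^p\le C_{p,q,r}^p\,|Q_N|\,\|f\|_{L^p(X,\mu)}^p .
\]
Because $|Q_N|/\big|[-N,N]^d\big|=(1+1/d)^d\le e$, this reads
\[
\big\|V_r\big(A_t^{B_q}f:0<t\le N/d\big)\big\|_{L^p(X,\mu)}\le e^{1/p}\,C_{p,q,r}\,\|f\|_{L^p(X,\mu)},
\]
a bound independent of $d$. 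Since $t\mapsto A_t^{B_q}f(x)$ is continuous, $V_r\big(A_t^{B_q}f(x):0<t\le N/d\big)$ increases to $V_r\big(A_t^{B_q}f(x):t>0\big)$ as $N\to\infty$, so the monotone convergence theorem yields \eqref{eq:63} with $C_{p,q,r}'=e^{1/p}C_{p,q,r}$.

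The main obstacle is exactly the step just highlighted: the naive transference, which enlarges the base cube by a fixed factor, contributes a constant of size $2^{d/p}$ and is worthless in the dimension-free regime. Keeping the enlargement factor equal to $1+1/d$ by truncating the time range to $(0,N/d]$, and then exploiting $(1+1/d)^d\le e$, is what makes the argument go through; the monotone convergence step afterwards restores the full range $t>0$ at no cost. The remaining points are routine: joint measurability of $(x,y)\mapsto f(T^yx)$ comes from the standing measurability of the action, measurability of $x\mapsto V_r(A_t^{B_q}f(x):\cdots)$ follows by taking the supremum defining $V_r$ over a countable dense set of times (allowed by continuity in $t$), and if convenient one first proves the estimate for $f$ in a dense subclass of $L^p(X,\mu)$ and then passes to the limit using the subadditivity of $V_r$. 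Finally, the identical transference scheme carries Theorem~\ref{thm:6} and Theorem~\ref{thm:3} over to Theorem~\ref{thm:5}.
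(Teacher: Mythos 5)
Your proof is correct and follows essentially the same Calder\'on transference route as the paper's Proposition \ref{prop:3}: the key device in both is to restrict the time parameter to $(0,\, cN/d\,]$ so that the spatial enlargement factor becomes $(1+c/d)^d$, which is bounded uniformly in $d$, and then to let the spatial scale go to infinity by monotone convergence. The only cosmetic differences are that the paper works with dilates $G_R$, $G_{R(1+\e/d)}$ of the body itself (making the principle valid for any symmetric convex body at once), whereas you enclose $(B_q)_t$ in cubes, and that the paper keeps a free parameter $\e$ and sends $\e\to 0^+$ to recover exactly the Euclidean constant $C_{p,q,r}$ instead of your $e^{1/p}C_{p,q,r}$; neither difference affects the dimension-free conclusion.
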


  In Section \ref{sec:tp} we prove a transference principle (see Proposition \ref{prop:3}), which allows us to deduce 
inequalities  \eqref{eq:43}, \eqref{eq:60} and \eqref{eq:63} from the corresponding estimates  
in  \eqref{eq:65}, \eqref{eq:64}, and \eqref{eq:67}, respectively.

The remarkable feature of Theorem \ref{thm:5} and Theorem
  \ref{thm:2} is that the implied bounds in \eqref{eq:43},
  \eqref{eq:60}, and \eqref{eq:63} are independent of the number of
  underlying commuting and measure-preserving transformations $T_1^{y_1},\ldots,T_d^{y_d}$.
  On the other hand, due to the properties of $r$-variational seminorm we immediatley know that
  the limits
  \[
\lim_{t\to0}A_t^Gf(x),\qquad \text{and} \qquad \lim_{t\to\infty}A_t^Gf(x)
  \]
  exist almost everywhere on $X$ for every $f\in L^p(X, \mu)$ and
  $p\in(1, \infty)$. It is worth  emphasizing that although the
  pointwise convergence for $M_t^Gf$ as $t\to0$ (respectively
  $t\to\infty$) can be easily deduced from the maximal bounds, this is
  much harder for $A_t^Gf$. For $M_t^Gf$ there are many natural
  dense subspaces which could be used to establish pointwise
  convergence. However, for $A_t^Gf$, which is\ defined on an abstract measure
  space there is no obvious way how to even find such a candidate for a
  dense class. Fortunately, $r$-variational estimates allow us to
  obtain the desired conclusion directly. 

  At this stage a  similar question  concerning the discrete analogue of \eqref{eq:22} arises. One  can ask about $r$-variational estimates independent of the dimension for the following  operator 
  \begin{align*}
      \mathcal A_t^Gf(x)=\frac{1}{|G_t\cap \ZZ^d|}\sum_{y\in G_t\cap\ZZ^d}f\big(T_1^{y_1}\circ T_2^{y_2}\circ\ldots\circ T_d^{y_d}x\big).
  \end{align*}
  This immediately lead us to the averaging operators  on $\ZZ^d$, i.e.
  \begin{align*}
      \mathcal M_t^Gf(x)=\frac{1}{|G_t\cap \ZZ^d|}\sum_{y\in G_t\cap\ZZ^d}f(x-y).
  \end{align*}
  However, nothing is known in the discrete setup, even the dimension free
  estimates for the maximal functions
  $\mathcal M_{\star}^Gf(x)=\sup_{t>0}|\mathcal M_t^Gf(x)|$
  are unknown. There is no easy way to derive these
  estimates from the corresponding dimension free estimates of the continuous
  analogue of $\mathcal M_{\star}^Gf$. In the ongoing project \cite{BMSW2} we initiated investigations in this direction and 
  we are able to provide dimension free bounds for the discrete maximal functions $\mathcal M_{\star}^Gf$ on $\ell^p(\ZZ^d)$ in a certain
  range of parameters $p$. We only handled  the case of the Euclidean
  balls $G=B_2$ and the cubes $G=B_{\infty}$. In the case of the discrete
  cubes we can also provide $r$-variational estimates (in some range of $p$ and $r$) and the
  results from this paper find applications there.
\subsection{Notation and basic reductions}
Let $\mathcal S(\RR^d)$ denote the set of all Schwartz functions on
$\RR^d$. To prove our results, by a simple density argument, it
suffices to establish inequalities \eqref{eq:65}, \eqref{eq:64} and
\eqref{eq:67} for all $f\in\mathcal S(\RR^d)$. From now on, in the proofs
presented in the paper $f$ will be always a Schwartz function.

Let $U$ be an invertible linear transformation of $\RR^d$ and let
$U_{p}$ be the isometry of $L^p$ given by
\[
  U_{p}f(x)=|\det U|^{-1/p}f(U^{-1}x) \quad \text{for any}\quad  p\ge1.
\]
Then we have
$$	U_{p}\big(V_r\big(  M_t^G f: t>0\big)\big)
=V_r\big( U_{p}(M_t^G f): t>0\big)= V_r\big( M_t^{U(G)}(U_{p}f):
t>0\big),$$
since
\[
  U_{p} \circ M_t^{G}=M_t^{U(G)} \circ U_{p}.
\]
Therefore $G$ in \eqref{eq:62} can be freely replaced with any other
symmetric convex body $U(G)$ and $L^p$ bounds remain unchanged. This
is an important remark which allows us to assume that \eqref{eq:86}
always holds.  From now on we will assume that $U(G)=G$. More
precisely we will assume that ${\rm Vol_d}\,G=|G|=1$ and that $G$ is
in the isotropic position, which means that there is an isotropic
constant $L=L(G)>0$ such that for every unit vector
$\xi\in\mathbb S^{d-1}$ we have
         \begin{align}
           \label{eq:68}
          \int_G (\xi \cdot x)^2{\rm d}x= L(G)^2.
         \end{align}

As in \cite{B1} the Fourier methods will be extensively exploited here 
to establish $L^2$ bounds in the aforementioned theorems. Let us define the Fourier transform  $\calF$ of a function $f \in
\mathcal S(\RR^d)$ by setting 
$$
\calF f(\xi)=\calF_{\RR^d} f(\xi) = \int_{\RR^d} f(x) e^{-2\pi i \sprod{\xi}{x}} {\: \rm d}x
$$
for any $\xi\in\RR^d$ and let $\mathcal F^{-1}=\mathcal F^{-1}_{\RR^d}$  denote the inverse
Fourier transform on $\RR^d$.

Since $|G|=1$ we see that the kernel of \eqref{eq:62} satisfies
\[
  |G_t|^{-1}\ind{G_t}(x)=t^{-d}K_G(t^{-1} x)
\]
for all $t>0$, where $K_G(x)=\ind{G}(x)$ and 
\[
  \calF (|G_t|^{-1}\ind{G_t})(\xi)=m^G(t\xi),
\]
where
\[
  m^G(\xi)=\calF(K_G)(\xi)=\calF(\ind{G})(\xi).
\]
The isotropic position of $G$ allows us to provide dimension-free
estimates for the multiplier $m$.

\begin{proposition}[{\cite[eq. (10),(11),(12)]{B1}}]
		\label{prop:1}
		Given  a symmetric convex body  $G\subset \RR^d$ with volume one,
                which is in the isotropic
                position, there exists a
                constant $C>0$ such that for every $\xi\in\RR^d$ we
                have
                \begin{align}
                  \label{eq:69}
                  |m^G(\xi)|\leq C(L |\xi|)^{-1},\qquad  |m^G(\xi)-1|\leq CL
                |\xi|,\qquad   |\langle\xi,\nabla m^G(\xi)\rangle|\le C.
                \end{align}
		The constant $L=L(G)$ is defined  in \eqref{eq:68}, while $C$ is a
                universal constant which does not depend on $d$.

	\end{proposition}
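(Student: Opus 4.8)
The plan is to establish the three estimates in \eqref{eq:69} directly from the Fourier-analytic description of $m^G(\xi)=\calF(\ind{G})(\xi)=\int_G e^{-2\pi i\langle\xi,x\rangle}{\rm d}x$, slicing the body $G$ along the direction $\xi/|\xi|$ and reducing everything to a one-dimensional integral involving the marginal $\varphi_\xi$ from \eqref{eq:86} (with $U=\mathrm{id}$ here, since $G$ is already isotropic). Writing $\xi=\rho\omega$ with $\rho=|\xi|$ and $\omega\in\mathbb S^{d-1}$, Fubini gives
\begin{align*}
  m^G(\xi)=\int_{\RR}\varphi_\omega(u)\,e^{-2\pi i\rho u}\,{\rm d}u,
\end{align*}
so $m^G(\xi)$ is the one-dimensional Fourier transform (at frequency $\rho$) of the function $u\mapsto\varphi_\omega(u)$. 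The key structural input is that $\varphi_\omega$ is a nonnegative, even, log-concave function on $\RR$ (evenness from the symmetry of $G$, log-concavity from the Brunn--Minkowski inequality applied to the convex body $G$), with total mass $\int\varphi_\omega=|G|=1$, and with variance $\int u^2\varphi_\omega(u)\,{\rm d}u=\int_G\langle\omega,x\rangle^2{\rm d}x=L^2$ by the isotropy condition \eqref{eq:68}.

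First I would record the elementary consequences of these properties of $\varphi_\omega$. Because $\varphi_\omega$ is even and log-concave with variance $L^2$, standard one-dimensional convex-geometry estimates give two-sided bounds $\varphi_\omega(0)\simeq L^{-1}$ and $\varphi_\omega$ is essentially supported, with exponential tails, on an interval of length $\simeq L$; in particular $\varphi_\omega(0)\le cL^{-1}$ for a universal constant. With this in hand, the three bounds follow:
\begin{enumerate}
  \item For $|m^G(\xi)-1|\le CL|\xi|$: since $\int\varphi_\omega=1$ we have $m^G(\xi)-1=\int_\RR\varphi_\omega(u)(e^{-2\pi i\rho u}-1)\,{\rm d}u$, and $|e^{-2\pi i\rho u}-1|\le 2\pi\rho|u|$; then $\int|u|\varphi_\omega(u)\,{\rm d}u\le(\int u^2\varphi_\omega)^{1/2}=L$ by Cauchy--Schwarz, giving the bound with $C=2\pi$.
  \item For $|\langle\xi,\nabla m^G(\xi)\rangle|\le C$: differentiating under the integral sign, $\partial_{\xi_j}m^G(\xi)=-2\pi i\int_G x_j e^{-2\pi i\langle\xi,x\rangle}{\rm d}x$, so $\langle\xi,\nabla m^G(\xi)\rangle=-2\pi i\int_G\langle\xi,x\rangle e^{-2\pi i\langle\xi,x\rangle}{\rm d}x=-2\pi i\rho\int_\RR u\,\varphi_\omega(u)e^{-2\pi i\rho u}{\rm d}u$. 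Integrating by parts in $u$ (using that $u\varphi_\omega(u)\to0$ at $\pm\infty$ by the exponential tails), the boundary terms vanish and one is left with $\int_\RR\frac{d}{du}\big(u\varphi_\omega(u)\big)e^{-2\pi i\rho u}{\rm d}u$; since $\varphi_\omega$ is even and log-concave, $u\mapsto u\varphi_\omega(u)$ is, on each half-line, monotone up then down, so $\int_\RR\big|\frac{d}{du}(u\varphi_\omega(u))\big|{\rm d}u$ is controlled by a universal multiple of $\sup_u u\varphi_\omega(u)\lesssim 1$ (the latter because $\varphi_\omega(u)\lesssim L^{-1}e^{-c|u|/L}$). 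This yields the uniform bound.
  \item For $|m^G(\xi)|\le C(L|\xi|)^{-1}$: this is the decay estimate. For $L|\xi|\le1$ it is trivial from $|m^G(\xi)|\le 1$. For $L\rho\ge1$, integrate by parts once in the representation $m^G(\xi)=\int_\RR\varphi_\omega(u)e^{-2\pi i\rho u}{\rm d}u$: the boundary terms vanish and $m^G(\xi)=\frac{1}{2\pi i\rho}\int_\RR\varphi_\omega'(u)e^{-2\pi i\rho u}{\rm d}u$, so $|m^G(\xi)|\le\frac{1}{2\pi\rho}\int_\RR|\varphi_\omega'(u)|{\rm d}u$. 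Again by evenness and log-concavity, $\varphi_\omega$ is increasing then decreasing, so $\int|\varphi_\omega'|=2\varphi_\omega(0)\le cL^{-1}$, giving $|m^G(\xi)|\le c'(L\rho)^{-1}$.
\end{enumerate}

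I would organize the write-up so that the one-dimensional lemma about even log-concave densities (the bounds $\varphi_\omega(0)\simeq L^{-1}$, the monotonicity structure, the exponential tail, and the total variation bounds $\int|\varphi_\omega'|\lesssim L^{-1}$ and $\int|(u\varphi_\omega)'|\lesssim 1$) is isolated first as an elementary fact, after which the three multiplier estimates drop out in a few lines each, as above. The main obstacle, and the only place requiring genuine care rather than routine computation, is the decay estimate (iii), specifically justifying the integration by parts and the total-variation control of $\varphi_\omega$ in arbitrary dimension; this is exactly where the convexity of $G$ enters essentially (through log-concavity of the marginal via Brunn--Minkowski) and where the isotropic normalization fixing the scale $L$ is used. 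The bound (ii) on $\langle\xi,\nabla m^G\rangle$ is slightly delicate only in that one must check the density is differentiable a.e.\ and the formal integration by parts is legitimate — but since $G$ is convex, $\varphi_\omega$ is locally Lipschitz on the interior of its support, so this causes no real trouble. Everything else is Cauchy--Schwarz and the triangle inequality.
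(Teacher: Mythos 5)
Your proof is correct. The paper itself does not prove Proposition~\ref{prop:1}; it simply cites equations (10)--(12) of Bourgain's original paper \cite{B1}, and what you have written is essentially a self-contained reconstruction of Bourgain's argument there: slice $G$ perpendicular to $\xi$, observe that the marginal $\varphi_\omega$ is even with $\int\varphi_\omega=1$, is unimodal and has exponential tails by Brunn--Minkowski (concavity of $\varphi_\omega^{1/(d-1)}$, hence log-concavity), satisfies $\varphi_\omega(0)\lesssim L^{-1}$ and $\int u^2\varphi_\omega=L^2$ by isotropy, and then read off the three estimates from the one-dimensional Fourier integral by Cauchy--Schwarz (for the low-frequency bound) and one integration by parts (for the other two). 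The only place to be slightly careful is that $\varphi_\omega$ need not be Lipschitz up to the endpoints of its support (for the cube with $\omega$ a coordinate direction it jumps there), so $\int|\varphi_\omega'|$ and $\int|(u\varphi_\omega)'|$ should be read as total variations and the integration by parts as a Stieltjes identity; with that reading the identities $\mathrm{TV}(\varphi_\omega)=2\varphi_\omega(0)$ and $\mathrm{TV}(u\varphi_\omega)=4\sup_u u\varphi_\omega(u)$ are exact and your bounds go through unchanged.
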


Throughout the whole paper $d\in\NN$ denotes the dimension. Unless otherwise stated $C>0$
will stand for an absolute constant whose value may vary from occurrence
to occurrence and  it will never depend on the dimension.  We will
use the convention that $A \lesssim_{\delta} B$
($A \gtrsim_{\delta} B$) to say that there is an absolute constant
$C_{\delta}>0$ (which possibly depends on $\delta>0$) such that
$A\le C_{\delta}B$ ($A\ge C_{\delta}B$). We abbreviate $A\lesssim B$
when the implicit constant is independent of  $\delta$. We will write
$A \simeq_{\delta} B$ when $A \lesssim_{\delta} B$ and
$A\gtrsim_{\delta} B$ hold simultaneously. 

For simplicity of the notation we will often abbreviate $M_t=M_t^G$ and $m=m^G.$

\section*{Acknowledgements}
The authors are grateful to the referees for careful reading of the manuscript and useful remarks
that led to the improvement of the presentation.

\section{Useful tools}
\label{sec:2}

In this section we gather some general tools which will be used in the
proofs of our main results. 

\subsection{Properties of $r$-variations}
We begin with some simple properties of $r$-variation seminorms.
For $r\in[1, \infty)$ the $r$-variation seminorm $V_r$ of
a complex-valued function $(0, \infty)\ni t\mapsto\mathfrak a_t$  is defined by
\[
V_r(\mathfrak a_t: t\in Z)=\sup_{J\in\NN}V_r^J(\mathfrak a_t: t\in Z),
\]
where $Z\subseteq(0, \infty)$ and
\[
V_r^J(\mathfrak a_t: t\in Z)=\sup_{\atop{0 < t_0<\ldots <t_J}{t_j\in Z}}
\bigg(\sum_{j=0}^J|\mathfrak a_{t_{j+1}}-\mathfrak a_{t_j}|^r\bigg)^{1/r}
\]
and the supremum is taken over all finite increasing sequences in $Z$ of length $J+1$.

The function $r \mapsto V_r(\mathfrak a_t: t\in Z)$ is non-increasing
and for any $Z_1\subseteq Z_2$
satisfies
\[
V_r(\mathfrak a_t: t\in Z_1)\le V_r(\mathfrak a_t: t\in Z_2).
  \]
  Moreover, for every $t_0\in Z$ we have
  \begin{align}
    \label{eq:70}
    \sup_{t\in Z}|\mathfrak a_t|\le|\mathfrak a_{t_0}|+ 2V_r(\mathfrak a_t: t\in Z).
  \end{align}
  If $Z$ is a countable set, then
  \begin{align}
    \label{eq:10}
    V_r(\mathfrak a_t: t\in Z)\le 2\Big(\sum_{t \in Z} |\mathfrak a_t|^r\Big)^{1/r}.
  \end{align}
Finally, for every $r\in[1, \infty)$ there exists $C_r>0$ such that	
\begin{equation}
\label{eq:21}
V_r(\mathfrak a_t: t>0)\leq C_rV_r(\mathfrak a_{2^n}: n\in \ZZ)+C_r\Big(\sum_{n\in\ZZ}
V_r\big(( \mathfrak a_t-\mathfrak a_{2^n}): t\in[2^n, 2^{n+1})\big)^r\Big)^{1/r}.
\end{equation}
The first  quantity on the right side in \eqref{eq:21} is called the
long variation seminorm, whereas the second is called  the short
variation seminorm. 
This is a very useful inequality which, in view of Theorem
\ref{thm:3}, will  permit us to reduce the proofs
of Theorem \ref{thm:6} and Theorem \ref{thm:7}  to the estimates of short
variations associated with $M_t^G$. In order to deal with the short
variations efficiently we will prove
an elementary inequality \eqref{eq:20} which allows us to dominate each dyadic block in the short
variations by suitable
square functions, which are simpler objects to handle.   
\begin{lemma}
	\label{lem:6}
	Given $r \in [1, \infty)$, $n\in\ZZ,$ and a continuous function $\mathfrak
        a:[2^n,
        2^{n+1}]\to\CC$, we have
        \begin{align}
          \label{eq:20}
          \begin{split}
        	V_r\big(\mathfrak a_t: t\in[2^n, 2^{n+1})\big)
		&\leq
		2^{1-1/r}
		\sum_{m\ge-n}
		\Big(
		\sum_{k = 0}^{2^{m+n}-1}
		\big|\mathfrak a_{2^n+{(k+1)}/{2^m}} - \mathfrak a_{2^n+{k}/{2^m}}\big|^r
		\Big)^{1/r}\\
                &=2^{1-1/r}
		\sum_{l\ge0}
		\Big(
		\sum_{k = 0}^{2^{l}-1}
		\big|\mathfrak a_{2^n+{2^{n-l}(k+1)}} - \mathfrak a_{2^n+{2^{n-l}k}}\big|^r
		\Big)^{1/r}.
          \end{split}          
        \end{align}
\end{lemma}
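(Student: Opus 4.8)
The plan is to realize an arbitrary increasing sequence $2^n \le t_0 < t_1 < \dots < t_{J+1} < 2^{n+1}$ inside the union of the nested dyadic grids $\mathcal D_m = \{2^n + k/2^m : 0 \le k \le 2^{m+n}\}$ (using $m \ge -n$ so that the spacing $2^{-m}$ is at most $2^n$, the length of the interval). The elementary fact I want to use is that every point of $[2^n, 2^{n+1})$ is a limit of points from $\bigcup_{m\ge -n}\mathcal D_m$, so by continuity of $t \mapsto \mathfrak a_t$ it suffices to bound the $r$-variation over finite sequences with all $t_j$ lying in some single fine grid $\mathcal D_M$; then one passes to the supremum over $M$ and uses that the right-hand side is monotone in $M$ (adding more scales only increases the sum). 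So fix $M\ge -n$ and a sequence in $\mathcal D_M$.

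First I would telescope each increment $\mathfrak a_{t_{j+1}} - \mathfrak a_{t_j}$ along the chain of refinements: writing $t_j$, $t_{j+1}$ as consecutive-or-not points of $\mathcal D_M$, decompose the jump from $t_j$ to $t_{j+1}$ through the coarsest grid points it crosses, i.e. along a path in the binary tree whose edges are the "parent--child" pairs $(2^n + k/2^m,\ 2^n + (k\pm1)/2^m)$ for various $m$ between $-n$ and $M$. Concretely, for each $m$ let $D_m(k) = \mathfrak a_{2^n + (k+1)/2^m} - \mathfrak a_{2^n + k/2^m}$ be the $m$-th level increments; the claim is that $\mathfrak a_{t_{j+1}} - \mathfrak a_{t_j}$ can be written as a signed sum of the $D_m(k)$'s in which, for each fixed level $m$, each index $k$ is used at most twice across the entire sequence $j = 0,\dots,J$ (roughly: once near the left endpoint $t_0$, once near the right endpoint $t_{J+1}$, since the intermediate telescoping cancels). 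This "each dyadic increment is used at most twice" bookkeeping is the combinatorial heart of the argument and is where I expect the real work to be — it is the standard but slightly fiddly estimate underlying short-variation bounds (cf.\ the reductions in \cite{JSW}, \cite{MTS1}).

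Granting that decomposition, I would finish by the triangle inequality in $\ell^r$:
\[
\Big(\sum_{j=0}^J |\mathfrak a_{t_{j+1}} - \mathfrak a_{t_j}|^r\Big)^{1/r}
\le \sum_{m=-n}^{M} \Big(\sum_{j} \Big|\sum_{k} (\pm) D_m(k)\Big|^r\Big)^{1/r},
\]
and for each fixed $m$ the inner sum is controlled by $2^{1-1/r}\big(\sum_{k=0}^{2^{m+n}-1} |D_m(k)|^r\big)^{1/r}$: since each $D_m(k)$ appears in at most two of the $j$-terms with coefficient $\pm1$, one has $\sum_j |\sum_k (\pm)D_m(k)|^r \le 2^{r-1}\cdot 2 \sum_k |D_m(k)|^r$ after distributing and using $|a+b|^r \le 2^{r-1}(|a|^r+|b|^r)$ — actually the cleaner route is to observe that the map sending the multiset of used increments into $\ell^r$ has norm at most $2^{1-1/r}$ because of the multiplicity-two bound, giving exactly the constant $2^{1-1/r}$. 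Summing over $m$ from $-n$ to $M$ and letting $M \to \infty$ yields the first displayed bound in \eqref{eq:20}; the second is just the reindexing $l = m+n$, $k/2^m = 2^{n-l}k$, which I would state without further comment.

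The main obstacle, then, is purely the discrete combinatorics of Step 2: writing down cleanly which dyadic increments $D_m(k)$ enter the expansion of $\mathfrak a_{t_{j+1}} - \mathfrak a_{t_j}$ and verifying the at-most-twice multiplicity uniformly in the chosen sequence. Everything else — the density/continuity reduction and the final $\ell^r$ triangle inequality — is routine.
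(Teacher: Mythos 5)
Your overall strategy is the same as the paper's: decompose each increment along dyadic scales, apply Minkowski's inequality in $\ell^r$ to pull out the sum over scales, and control each scale using the fact that only a bounded number of dyadic pieces per scale can appear. But the key combinatorial step — the one you yourself flag as ``where I expect the real work to be'' — is deferred rather than proved, and the sketch you give for it is off the mark. You frame the multiplicity bound as ``each $D_m(k)$ is used at most twice across the entire sequence \ldots since the intermediate telescoping cancels.'' That is not the right mechanism. Telescoping cancellation applies to the linear sum $\sum_j(\mathfrak a_{t_{j+1}}-\mathfrak a_{t_j})$, not to $\sum_j|\mathfrak a_{t_{j+1}}-\mathfrak a_{t_j}|^r$, and invoking it here points in the wrong direction. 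The correct statement, which the paper isolates as its equation \eqref{eq:18}, is purely geometric: any half-open subinterval $[s,t)\subset[2^n,2^{n+1})$ partitions into pairwise disjoint dyadic intervals from $\bigcup_{m\ge -n}\mathcal I_m$, with \emph{at most two} of them at any given scale $m$. This is a per-interval statement about $[t_j,t_{j+1})$, applied separately for each $j$. The cross-$j$ bookkeeping is then automatic: the intervals $[t_j,t_{j+1})$ are pairwise disjoint, so a fixed dyadic interval can occur in the decomposition of at most one $j$, i.e.\ the global multiplicity is $\le 1$, not $\le 2$. Your estimate $\sum_j|\sum_k(\pm)D_m(k)|^r\le 2^{r-1}\cdot 2\sum_k|D_m(k)|^r$ has an extraneous factor $2$ (from using global multiplicity $2$ in place of $1$), which is why your explicit constant comes out as $2$ rather than $2^{1-1/r}$; you notice this and retreat to ``the cleaner route \ldots gives exactly $2^{1-1/r}$'' without exhibiting it. The correct chain is: for each $j,m$ there are at most two nonzero $k$-terms (Hölder gives a factor $2^{r-1}$), and summing over $j$ uses disjointness (global multiplicity $1$, no extra factor), yielding $2^{1-1/r}$.

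Two smaller remarks. First, your preliminary reduction to sequences lying on a single dyadic grid $\mathcal D_M$ is sound but unnecessary: the paper's partition lemma \eqref{eq:18} works directly for arbitrary real endpoints $s,t\in[2^n,2^{n+1})$, so one never needs to approximate the $t_j$ and pass to a limit. Second, the ``path in the binary tree'' picture you invoke is not quite the decomposition being used; the paper's construction greedily peels off the largest dyadic interval inside $[t_j,t_{j+1})$, then works outward from it, which is what yields both the partition property and the at-most-two-per-scale bound. So the plan is in the right family, but the actual lemma on which everything rests is neither stated correctly nor proved, and the constant computation as written does not close.
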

\begin{proof}
 First of all we observe  that any interval
  $[s, t)$ for some real numbers $2^n\le s<t< 2^{n+1}$ can be written as a disjoint
  sum (possibly infinite sum) of dyadic intervals, i.e.
  \begin{align}
    \label{eq:18}
[s, t)=\bigcup_{l\in\ZZ}[w_l, w_{l+1})    
  \end{align}
  with the following properties:
  \begin{itemize}
  \item Each $[w_l, w_{l+1})\in \mathcal I_m$ for some $m\ge-n$, where
\begin{align*}
\mathcal I_m=\Big\{\Big[2^n+\frac{k}{2^m}, 2^n+\frac{k+1}{2^m}\Big): 0 \leq k \leq
2^{m+n}-1\Big\} .
\end{align*}
\item There are at most two dyadic intervals on the right-hand side of \eqref{eq:18} with the
  same length.
  \end{itemize}

To prove \eqref{eq:18} let us
consider  dyadic intervals of maximal length contained in
$A=[s, t)$. There are  at most two such intervals. Let $I_0$ be the
one which lies closer to the left endpoint $s$. Then $[s, t)\setminus I_0$ is a sum of
at most two intervals $A_1, B_1$. Without loss of generality we may
assume that $s\in A_1$. Now let $I_1$ be a dyadic interval
contained in $A_1$ with maximal length such that $\dist(I_1,
I_0)=0$. Then by the maximality of $I_0$ we see that
$|I_1|\le |I_0|/2$. Now we define $A_2=A_1\setminus I_1$. If
$A_2\not=\emptyset$ then we proceed as in the previous step. Namely we
take a dyadic interval $I_2$ contained in $A_2$ with maximal length
such that $\dist(I_2, I_1)=0$. By the maximality of $I_1$ we see that
$|I_2|\le |I_1|/2$. Now we define $A_3=A_2\setminus I_2$. If
$A_3\not=\emptyset$ then we proceed likewise above. Then inductively
we obtain a sequence of disjoint dyadic intervals $(I_j: j\in\NN)$ such that
$|I_1|>|I_2|>\ldots$ and $A_1=\bigcup_{j\in\NN}I_j$. If $B_1$ is empty
we are done. If not then we can repeat the argument as for $A_1$ and
obtain a sequence of disjoint dyadic intervals $(J_j: j\in\NN)$ contained in
$B_1$ such that $|J_1|>|J_2|>\ldots$ and
$B_1=\bigcup_{j\in\NN}J_j$. Thus we have proven
that
\[
[s, t)=\bigcup_{j\in\NN}I_j\cup I_0\cup \bigcup_{j\in\NN}J_j.
  \]
From the construction described above it is clear that
there are at most two dyadic intervals in the sum which have the same
length.

Having proven \eqref{eq:18} we can show \eqref{eq:20}. Namely, let
$2^n\le t_0 < t_1 < \ldots < t_J < 2^{n+1}$ be any increasing
sequence. By \eqref{eq:18} for any integer $0\le j<J$ we write
	$$
	[t_j, t_{j+1}) = \bigcup_{l\in\ZZ} [w_l^j, w_{l+1}^j),
	$$
	where each  $[w_l^j, w^j_{l+1})$ is a dyadic interval which
        belongs to $\mathcal I_m$ for some $m\ge -n$. Thus
        \begin{align*}
          |\mathfrak a_{t_{j+1}} -\mathfrak a_{t_j}|\le\sum_{l\in\ZZ}
          |\mathfrak a_{w_{l+1}^j} -\mathfrak a_{w_l^j}
          |
          =
          \sum_{m\ge-n}
          \sum_{l\in\ZZ:\: [w_l^j, w_{l+1}^j) \in \calI_m}
          |\mathfrak a_{w_{l+1}^j} -\mathfrak a_{w_l^j}|	  
        \end{align*}
        and $|\{l\in\ZZ:\ [w_l^j, w_{l+1}^j) \in \calI_m\}|\le 2$ for
        any $m\ge-n$. 
        Hence, we obtain
	\begin{align*}
		\Big(
		\sum_{j = 0}^{J-1}
		|\mathfrak a_{t_{j+1}} -\mathfrak a_{t_j}|^r
		\Big)^{1/r}
		&\leq
		\Big(
		\sum_{j = 0}^{J-1}
		\Big(
		\sum_{m\ge-n}
		\sum_{l\in\ZZ:\: [w_l^j, w_{l+1}^j) \in \calI_m}
		|\mathfrak a_{w^j_l} -\mathfrak a_{w^j_{l+1}}|
		\Big)^r
		\Big)^{1/r}\\
		&\leq
		\sum_{m\ge-n}
		\Big(
		\sum_{j = 0}^{J-1}
		\Big(
		\sum_{l\in\ZZ:\: [w_l^j, w_{l+1}^j) \in \calI_m}
		|\mathfrak a_{w^j_l} -\mathfrak a_{w^j_{l+1}}|
		\Big)^r
		\Big)^{1/r} && \text{by triangle inequality}\\
                &\le 2^{1-1/r}
		\sum_{m\ge-n}
		\Big(
		\sum_{j = 0}^{J-1}
		\sum_{l\in\ZZ:\: [w_l^j, w_{l+1}^j) \in \calI_m}
		|\mathfrak a_{w^j_l} -\mathfrak a_{w^j_{l+1}}|^r
		\Big)^{1/r}&& \text{by H\"older's inequality}.
              \end{align*}
              The last sum is dominated by the right-hand side of
              \eqref{eq:20}, since $[t_j, t_{j+1})\cap[t_{j'},
              t_{j'+1})=\emptyset$ for $j\not=j'$ and the proof is
              completed. 
\end{proof}

We finish this section with an approximate characterization of the
$r$-variation seminorm, which is interesting in its own right. In the
proposition stated below we will be dealing with functions $F$ defined
on $\RR$ that are compactly supported and belong to $L^r(\RR)$ for
some $r\in[1, \infty]$. For such a function $F$ we will write a fractional
derivative $D^{\alpha}F$  defined   as a tempered distribution
by the formula
\begin{align*}
  \calF_{\RR}(D^{\alpha}F)(\xi)&=(2\pi |\xi|)^{\alpha}\calF_{\RR}F(\xi),
\end{align*}
for every $\xi\in\RR$ and $\alpha\in[0, 1]$.
Here  $\calF_{\RR}$ stands for the Fourier transform on $\RR$ given by
  $$\calF_{\RR}f(\xi)=\int_{\RR} f(x) e^{-2\pi i {\xi}{x}} {\rm d}x.$$
  \begin{proposition}
	\label{prop:chVr}
	Let $F$ be a complex-valued function with a compact support in $\RR$.
	\begin{enumerate}[(i)]
        \item Suppose that $F$ and $D^{\alpha} F$ are in $L^r(\RR)$
        for some $r\in[1, \infty]$ and $\alpha>1/r.$ Then there is
        $C_r>0$ such that
		\begin{equation}
		\label{eq:chVr1}
                V_r(F(t): t\in\RR)\le C_r\big(\|F\|_{L^r(\RR)}+\|D^{\alpha }F\|_{L^r(\RR)}\big).
		\end{equation}  
			\item  Conversely, assume that $F\in L^r(\RR)$ and $V_r(F(t): t\in\RR)<\infty.$ Then for every $\beta<1/r$ there is $C_{\beta, r}>0$ such that
			\begin{equation}
			\label{eq:chVr2}
		\|D^{\beta }F\|_{L^r(\RR)}\le C_{\beta,r}\big(\|F\|_{L^r(\RR)}+V_r(F(t): t\in\RR)\big).
			\end{equation} 
			 
		\end{enumerate}
\end{proposition}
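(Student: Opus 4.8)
The plan is to prove both parts by a one-dimensional Littlewood--Paley decomposition in $t$. Write $F=P_{\le 0}F+\sum_{j\ge 1}\Delta_jF$, where $\Delta_j$ is the Fourier multiplier with symbol $\psi(2^{-j}\xi)$ for a fixed $\psi\in C_c^\infty(\RR)$ supported in $\{2^{-1}\le\abs\xi\le 2\}$, with convolution kernel $\phi_j(t)=2^j\phi(2^jt)$ (here $\phi\in\mathcal S(\RR)$ is the inverse Fourier transform of $\psi$, so $\int_{\RR}\phi=\psi(0)=0$), and $P_{\le 0}$ is a smooth projection onto $\{\abs\xi\le 2\}$. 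In both directions one estimates each frequency block separately and sums a geometric series in $j$, and the hypothesis $\alpha>1/r$ in (i) (respectively $\beta<1/r$ in (ii)) is precisely what forces that series to converge. We abbreviate $V_r(F)=V_r(F(t):t\in\RR)$. The case $r=\infty$ of (i) is trivial, since $V_\infty(F)\le 2\|F\|_{L^\infty(\RR)}$; in (ii) we may assume $1\le r<\infty$ and $0<\beta<1/r$, because $D^\beta$ is defined only for $\beta\in[0,1]$ and $\beta=0$ is trivial. So from now on $1<r<\infty$ in (i) and $1\le r<\infty$ in (ii), and we work with the continuous representative of $F$ (which exists under the hypotheses of (i)).

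Part (i) rests on two ingredients. The first is a band-limited variation estimate: if $\calF_{\RR}G$ is supported in $\{\abs\xi\le 2^{j+1}\}$ then $V_r(G)\le C_r\,2^{j/r}\|G\|_{L^r(\RR)}$. By the dilation invariance of $V_r$ it suffices to treat $j=0$. On each unit interval $[n,n+1)$ the proof of Lemma \ref{lem:6} is affine invariant and yields the analogue of \eqref{eq:20}; bounding each increment of length $2^{-l}$ by $2^{-l}\|G'\|_{L^\infty[n,n+1]}$ makes the $l$-th block contribute $2^{-l(1-1/r)}\|G'\|_{L^\infty[n,n+1]}$, and summing in $l$ (here $r>1$ is used) gives $V_r(G:[n,n+1))\lesssim_r\|G'\|_{L^\infty[n,n+1]}\lesssim\|G\|_{L^\infty[n,n+1]}$ by Bernstein's inequality. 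One then reassembles the unit intervals into $\RR$ by the elementary splitting underlying \eqref{eq:21} (which also produces a term $(\sum_n\abs{G(n)}^r)^{1/r}$), and both $(\sum_n\|G\|_{L^\infty[n,n+1]}^r)^{1/r}$ and $(\sum_n\abs{G(n)}^r)^{1/r}$ are $\lesssim_r\|G\|_{L^r(\RR)}$ for band-limited $G$ by the Nikolskii and Plancherel--P\'olya inequalities. The second ingredient is Bernstein's inequality in the form $\|\Delta_jF\|_{L^r}\lesssim 2^{-j\alpha}\|D^\alpha F\|_{L^r}$, valid because $\Delta_j$ is $2^{-j\alpha}$ times a smooth multiplier localized at $\{\abs\xi\sim 2^j\}$ composed with $D^\alpha$. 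Since then $\|\Delta_jF\|_{L^\infty}\lesssim 2^{j/r}\|\Delta_jF\|_{L^r}\lesssim 2^{-j(\alpha-1/r)}\|D^\alpha F\|_{L^r}$ is summable in $j$, the decomposition converges uniformly to $F$, and by lower semicontinuity and subadditivity of $V_r$ one gets
\begin{equation*}
V_r(F)\le V_r(P_{\le 0}F)+\sum_{j\ge 1}V_r(\Delta_jF)\lesssim_r\|F\|_{L^r}+\sum_{j\ge 1}2^{-j(\alpha-1/r)}\|D^\alpha F\|_{L^r}\lesssim_{r,\alpha}\|F\|_{L^r}+\|D^\alpha F\|_{L^r},
\end{equation*}
which is \eqref{eq:chVr1}.

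Part (ii) is dual. Since $D^\beta$ composed with a slightly fattened projection at scale $2^j$ has $L^r\to L^r$ operator norm $O(2^{j\beta})$, and $D^\beta P_{\le 0}$ is bounded on $L^r$,
\begin{equation*}
\|D^\beta F\|_{L^r}\le\|D^\beta P_{\le 0}F\|_{L^r}+\sum_{j\ge 1}\|D^\beta\Delta_jF\|_{L^r}\lesssim\|F\|_{L^r}+\sum_{j\ge 1}2^{j\beta}\|\Delta_jF\|_{L^r}.
\end{equation*}
The key estimate is $\|\Delta_jF\|_{L^r(\RR)}\lesssim_r 2^{-j/r}V_r(F)$. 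To obtain it, note first the elementary bound $\|F(\cdot+h)-F\|_{L^r(\RR)}\le\abs h^{1/r}V_r(F)$ for every $h\in\RR$: by Fubini,
\begin{equation*}
\int_{\RR}\abs{F(t+h)-F(t)}^r\,{\rm d}t=\int_0^{\abs h}\Big(\sum_{k\in\ZZ}\abs{F(u+(k+1)h)-F(u+kh)}^r\Big)\,{\rm d}u,
\end{equation*}
and for each fixed $u$ the sequence $(u+kh)_{k\in\ZZ}$ is increasing, so the inner sum is $\le V_r(F)^r$. Then, using $\int_{\RR}\phi_j=0$, write $\Delta_jF=\int_{\RR}\phi_j(s)\big(F(\cdot-s)-F\big)\,{\rm d}s$ and apply Minkowski's integral inequality to get $\|\Delta_jF\|_{L^r}\le\int_{\RR}\abs{\phi_j(s)}\,\abs s^{1/r}\,{\rm d}s\cdot V_r(F)=C_\phi\,2^{-j/r}V_r(F)$. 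Summing, $\sum_{j\ge 1}2^{j\beta}\|\Delta_jF\|_{L^r}\lesssim\sum_{j\ge 1}2^{-j(1/r-\beta)}V_r(F)\lesssim_{\beta,r}V_r(F)$ since $\beta<1/r$, and together with the first term this is \eqref{eq:chVr2}.

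I expect the main obstacle to lie in part (i), namely the band-limited variation estimate: Lemma \ref{lem:6} disposes of each unit interval cleanly, but one must reassemble them into all of $\RR$ with no loss in the constant, which forces the use of the Nikolskii and Plancherel--P\'olya inequalities for band-limited functions (and the bookkeeping term $(\sum_n\abs{G(n)}^r)^{1/r}$, as in \eqref{eq:21}); one must also be mildly careful that, since $V_r(F)$ genuinely depends on pointwise values of $F$, the Littlewood--Paley series must converge to $F$ pointwise, which here follows from the summability of $\sum_j\|\Delta_jF\|_{L^\infty}$ under the hypotheses of (i). Part (ii), by contrast, is soft: its only real inputs are the Fubini identity above and the cancellation $\int_{\RR}\phi_j=0$.
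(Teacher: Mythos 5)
Your proof is correct, and for part (i) it takes a genuinely different route from the paper. The paper writes $F=G_\alpha*f$ via the Bessel potential, with $\|f\|_{L^r}\lesssim\|F\|_{L^r}+\|D^\alpha F\|_{L^r}$, introduces the analytic family $F_s(t)=e^{(s-1+1/r)^2}\int G_\alpha(u)|u|^{1-s-1/r}f(t-u)\,\mathrm{d}u$, proves a $V_1\lesssim\|f\|_{L^1}$ bound on $\Real(s)=0$ and a $V_\infty\lesssim\|f\|_{L^\infty}$ bound on $\Real(s)=1$, and then invokes Stein's interpolation theorem at $s=1-1/r$. Your Littlewood--Paley/Bernstein argument replaces the analytic family and the complex interpolation with the band-limited variation estimate $V_r(G)\lesssim 2^{j/r}\|G\|_{L^r}$ for $\supp\calF G\subset\{|\xi|\le 2^{j+1}\}$ together with $\|\Delta_jF\|_{L^r}\lesssim 2^{-j\alpha}\|D^\alpha F\|_{L^r}$; the price is the reassembly step over all of $\RR$, which (as you flag) forces the Nikolskii and Plancherel--P\'olya inequalities and is where the real work in your version sits, whereas the paper never localizes in $t$ and lets the Bessel kernel and interpolation absorb that work. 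Both arguments need, and both deliver, the continuity of (the right representative of) $F$ under the hypotheses of (i) --- you get it from uniform convergence of the Littlewood--Paley series, the paper from the decay of $G_\alpha$; your one-line disposal of $r=\infty$ also implicitly uses this. For part (ii) the two proofs are essentially the same: both rest on the Fubini modulus-of-continuity bound $\|F(\cdot+h)-F\|_{L^r}\le|h|^{1/r}V_r(F)$, and your dyadic decomposition via the mean-zero kernels $\phi_j$ is just the discrete version of the paper's direct use of the difference-quotient formula $D^\beta F(t)=c_\beta\lim_{\eps\to0}\int_{|h|\ge\eps}|h|^{-1-\beta}(F(t+h)-F(t))\,\mathrm{d}h$ split at $|h|=1$ (your $\sum_{j\ge1}2^{-j(1/r-\beta)}$ is the paper's $\int_0^1|h|^{-1-\beta+1/r}\,\mathrm{d}h$, and your $P_{\le0}$ term is the paper's $|h|>1$ term). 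Net effect: the paper's proof of (i) is shorter but leans on heavier machinery (Bessel potentials, Stein interpolation); yours is longer in the band-limited lemma but more elementary and self-contained, reusing the dyadic-splitting ideas already present in Lemma \ref{lem:6} and \eqref{eq:21}.
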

\begin{proof}
We begin with demonstrating part (i).  Let $J_{\alpha}$ be the Bessel potential operator
defined for any $h\in L^r(\RR)$ by
\[
J_{\alpha}(h)(x)=G_{\alpha}*h(x),
\]
where
\[
\mathcal F_{\RR}G_{\alpha}(\xi)=\big(1+4\pi|\xi|^2\big)^{-\alpha/2}.
\]
It is known from \cite[Chapter 5, Section 3.2]{StSing} that there are
finite measures $\nu_{\alpha}$ and $\la_{\alpha}$ such that
\begin{align*}
  \big(1+4\pi^2 |\xi|^2\big)^{\alpha/2}=\hat{\nu}_{\alpha}(\xi)+(2\pi |\xi|)^{\alpha}\hat{\la}_{\alpha}(\xi).
\end{align*}
Thus one can represent $F$ in terms of $J_{\alpha}$. Namely,
$F=J_{\alpha}(f),$ where
$$f=F*_{\RR}\nu_{\alpha}+(D^{\alpha} F)*_{\RR}\la_{\alpha},$$
and by our assumptions $f\in L^r(\RR)$. Moreover, for every $t\in \RR$
we have
$$F(t)=\int_{\RR} G_{\alpha}(u)f(t-u){\: \rm d}u.$$

For the Bessel kernel $G_{\alpha}$ one has the following estimates
(see \cite[Chapter 5, Section 3.1]{StSing}, here $n=1$)
\begin{align}
  \label{eq:35}
  G_\alpha(u)=O\big(|u|^{-1+\alpha}\big)\qquad \textrm{and}\qquad \textrm{$G_{\alpha}(u)$ is rapidly decreasing at infinity},
\end{align}
and
\begin{align}
  \label{eq:36}
  \frac{{\rm d}}{{\rm d}u}G_\alpha(u)=O\big(|u|^{-2+\alpha}\big)\qquad \textrm{and}\qquad \textrm{$\frac{{\rm d}}{{\rm d}u}G_\alpha(u)$ is rapidly decreasing at infinity}.
\end{align}
Note that this representation of $F$, and the properties of
$G_{\alpha}$ stated immediately above, show that $F$ may be taken to
be continuous on $\RR$ and not merely defined almost-everywhere, and
hence $V_r(F(t): t\in\RR)$ is well-defined.
Now for any $s=\sigma+i\tau$ consider $F_s$ defined by
$$F_s(t)=e^{(s-1+1/r)^{2}}\int_{\RR} G_{\alpha}(u)|u|^{1-s-1/r}f(t-u){\rm d}u.$$
 When $s=1-1/r,$ then $F_s(t)=F(t).$ Next, we show that
\begin{equation}
\label{eq:chVraux1}
V_1(F_s(t): t\in\RR)\le C_0 \|f\|_{L^1(\RR)},\qquad \text{if} \quad \Real(s)=0.
\end{equation}
Indeed, when $\Real (s)=0,$ then the derivative of $F_s$ (more precisely, here we mean the weak derivative) can be estimated by
$$\bigg|\frac{{\rm d} }{{\rm d} t}F_s(t)\bigg|\le ce^{-\tau^2}\int_{\RR}\bigg|\frac{{\rm d}}{{\rm d}u}\big(G_\alpha(u)|u|^{1-s-1/r}\big)\bigg||f(t-u)|{\rm d}u.$$
Thus taking into account the properties of $G_{\alpha}$ stated in \eqref{eq:35} and \eqref{eq:36} and the fact that $\alpha>1/r$ (which imply that $F_s\in L^{\infty}(\RR)$ and both  $F_s$ and $\frac{{\rm d} }{{\rm d} t}F_s$ are in $L^1(\RR)$), we obtain 
$$V_1(F_s(t): t\in\RR)\le\int_{\RR}\bigg|\frac{{\rm d} }{{\rm d} t}F_s(t)\bigg|{\rm d}t \le C_0 \|f\|_{L^1(\RR)},$$
where
$$C_0:=c'(1+|\tau|)e^{-\tau^2}\bigg(\int_{|u|\le 1}|u|^{-1+\alpha-1/r}{\rm d}u+\int_{|u|>1}|u|^{-2}{\rm d}u\bigg)<\infty.$$
On the other hand, we show
\begin{equation}
\label{eq:chVraux2}
V_{\infty}(F_s(t): t\in\RR)\le 2C_1 \|f\|_{L^{\infty}(\RR)},\qquad \text{if} \quad \Real(s)=1.
\end{equation}
Indeed,
when $\Real(s)=1$, then
$$\sup_{t\in \RR} |F_s(t)|\le e^{1/r^2}\int_{\RR} |G_{\alpha}(u)| |u|^{-1/r}{\rm d}u\, \|f\|_{L^{\infty}(\RR)},$$
since
$$C_1:=e^{1/r^2}\int_{\RR}|G_{\alpha}(u)||u|^{-1/r}{\rm d}u \le c \bigg(\int_{|u|\le 1}|u|^{-1+\alpha-1/r}{\rm d}u+\int_{|u|>1}|u|^{-2}{\rm d}u\bigg)<\infty.$$

Now the mappings $f\mapsto F_s$ can be rephrased as an analytic family
of operators as follows. Choose any sequence $t_0<t_1<\cdots <t_N$ and
define $T_s(f)$ to be the sequence
\[
\big(F_s(t_k)-F_s(t_{k-1}): k\in\ZZ_N\big),
\]
where $\ZZ_N=\{1, 2,\ldots, N\}$.
Observe now that \eqref{eq:chVraux1} and
\eqref{eq:chVraux2} imply that
$$\|T_s(f)\|_{\ell^1(\ZZ_N)}\le C_0 \|f\|_{L^1(\RR)},\qquad \text{if}\quad \Real(s)=0 $$
and
$$ \|T_s(f)\|_{\ell^{\infty}(\ZZ_N)}\le 2C_1 \|f\|_{L^{\infty}(\RR)},\qquad \text{if}\quad \Real(s)=1.$$
Now the complex interpolation theorem, see \cite[Chapter 5, Section 4]{SteWei}, shows that for $s=1-1/r,$
$$\|T_s(f)\|_{\ell^r(\ZZ_N)}\le C\|f\|_{L^r(\RR)}.$$
As a result, since $F_s=F$ for $s=1-1/r$, we obtain
$$\Big(\sum_{k=1}^N |F(t_k)-F(t_{k-1})|^r\Big)^{1/r}\le C \|f\|_{L^r(\RR)}.$$
Since the constant $C$ depends only on the constants $C_0,$ $C_1,$ and
$r,$ therefore $C$ is independent of the choice of $(t_k: 0\le k\le N)$
and $N$. This gives
$$V_r(F(t): t\in\RR)\le C\|f\|_{L^r(\RR)}$$
and combined with
$$\|f\|_{L^r(\RR)}
\le c(\|F\|_{L^r(\RR)}+\|D^{\alpha} F\|_{L^r(\RR)})$$
proves part (i) and \eqref{eq:chVr1}.

We now focus on part (ii). We can assume that $r<\infty$ for otherwise
there is nothing to prove. Let $h\ge 0$ and  $A=V_r(F(t):t\in\RR)<\infty,$ then
\begin{equation}
\label{eq:chVraux3} \sum_{k\in \ZZ} |F(v+(k+1)h)-F(v+kh)|^r\le A^r,
\end{equation}
Now integrate this inequality for $v$ in the interval $[0,h].$
Since 
$$\int_0^h \big|F(v+(k+1)h)-F(v+kh)\big|^r{\rm d}v=\int_{kh}^{(k+1)h}|F(v+h)-F(v)|^r{\rm d}v,$$
inserting this in \eqref{eq:chVraux3} gives the modulus of continuity inequality
$$
\|F(\,\cdot+h)-F(\cdot)\|_{L^r(\RR)}\le A h^{1/r},\qquad \text{for} \quad h\ge 0,$$
from which the result for negative $h$ also follows yielding
\begin{equation}
\label{eq:chVraux4}\|F(\, \cdot+h)-F(\cdot)\|_{L^r(\RR)}\le V_r(F(t):t\in\RR)\,|h|^{1/r},\qquad \text{for} \quad h\in \RR.
\end{equation}
We now invoke the fact that for any $0<\beta<1$
$$D^{\beta}F(t)=c_{\beta}\lim_{\e\to 0}\int_{|h|\ge \e}\, \frac{F(t+h)-F(t)}{|h|^{1+\beta}}{\rm d}h,$$
for a suitable constant, where the limit is taken in the sense of distributions (see \cite[Chapter 5, 6.10]{StSing} and the references given there). Writing
$$D^{\beta}F(t)=c_{\beta}\lim_{\e\to 0}\int_{1\ge |h|\ge \e}\, \frac{F(t+h)-F(t)}{|h|^{1+\beta}}{\rm d}h+c_{\beta}\int_{|h|>1}\, \frac{F(t+h)-F(t)}{|h|^{1+\beta}}{\rm d}h,$$
we see that both terms belong to $L^r(\RR)$.  
To estimate the first term we apply Minkowski's integral inequality and \eqref{eq:chVraux4} and obtain
\begin{align*}
  \bigg(\int_{\RR}\bigg|\int_{1\ge |h|\ge \e}\, \frac{F(t+h)-F(t)}{|h|^{1+\beta}}{\rm d}h\bigg|^r{\rm d}t\bigg)^{1/r}&\le  V_r(F(t): t\in\RR)\int_{1\ge |h|\ge \e}|h|^{-1-\beta+1/r}{\rm d}h\\
  &\le C_{\beta} V_r(F(t): t\in\RR),
\end{align*}
since $\beta <1/r.$
The second term is controlled by
\begin{equation*}
\bigg(\int_{\RR}\bigg|\int_{|h|\ge 1}\, \frac{F(t+h)-F(t)}{|h|^{1+\beta}}{\rm d}h\bigg|^r{\rm d}t\bigg)^{1/r}\le  2\|F\|_{L^r(\RR)}\int_{|h|\ge 1}|h|^{-1-\beta}{\rm d}h \le C'_{\beta} \|F\|_{L^r(\RR)},
\end{equation*}
because $\beta>0$ and $\|F(t+h)\|_{L^r(\RR)}=\|F\|_{L^r(\RR)}.$ This proves part (ii) and \eqref{eq:chVr2}.
\end{proof}

\subsection{An almost orthogonality principle}
Now we prove an $r$-variational counterpart of an almost orthogonality principle 
 from \cite{Car1}. Proposition \ref{prop:2} will be a
key ingredient in the proofs of Theorem \ref{thm:6} and Theorem \ref{thm:7} for $p<2$. 
\begin{proposition}
  \label{prop:2}
  Suppose that $(X, \mathcal B,\mu)$ is a $\sigma$-finite measure
  space and let $(T_{t}: t\in Z)$ be a family of positive linear
  operators\footnote{We say that a linear operator $T$ is positive, if
    $Tf\ge0$ for every function $f\ge0$.} defined on
  $\bigcup_{1\le p\le \infty}L^p(X, \mathcal B,\mu)$ for some index set
  $Z\subseteq (0, \infty)$. Moreover, for a given $p_0\in[1, 2)$ the following conditions are
  satisfied:
\begin{itemize}
\item There is a family of linear operators $(S_n: n\in\ZZ)$ with the
property that $f=\sum_{n\in\ZZ}S_nf,$ for any
$f\in L^2(X, \mathcal B,\mu)$. Moreover,  for every $p\in(1, 2]$
there exists $C_{1, p}>0$ for which for every
$f\in L^p(X, \mathcal B,\mu)$ we have
  \begin{align}
    \label{eq:1}
       \bigg\|\Big(\sum_{n\in \ZZ}|S_n
    f|^2\Big)^{1/2}\bigg\|_{L^p}\leq C_{1, p}\|f\|_{L^p}.
  \end{align}  
  \item For every $p\in(p_0, 2]$ there exists
     $C_{2, p}>0$ such that for every $f\in L^p(X, \mathcal B,\mu)$ we have 
    \begin{align}
      \label{eq:11}
      \big\|\sup_{t\in Z}|T_tf|\big\|_{L^p}\le C_{2, p}\|f\|_{L^p}.
    \end{align}
    \item For every $p\in (p_0, 2]$ there exists $C_{3, p}>0$ such that for every
    $f\in L^p(X, \mathcal B,\mu)$ we have
      \begin{align}
        \label{eq:33}
       \sup_{n \in\ZZ}\big\|V_p\big(T_tf: t\in Z_n\big)\big\|_{L^p}\le C_{3, p}\|f\|_{L^p},  
      \end{align}
      where $Z_n=[2^n, 2^{n+1})\cap Z$.
    \item There exists a sequence $(a_j: j\in
      \ZZ)$ of positive numbers such that
      $\sum_{j\in\ZZ}a_j^{\rho}=A_{\rho}<\infty$ for every $\rho>0$ and there exists 
      $C_4>0$ such that  for every
      $j\in\ZZ$ and for every $f\in L^2(X, \mathcal B,\mu)$ we have
      \begin{align}
        \label{eq:92}
        \bigg\|\Big(\sum_{n\in \ZZ}V_2\big(T_tS_{j+n}
    f:t\in Z_n\big)^2\Big)^{1/2}\bigg\|_{L^2}\leq C_{4}a_j\|f\|_{L^2}.
      \end{align}
\end{itemize}
Then for every $p\in(p_0, 2]$, there exists $C_{p}>0$ such that for
every $f\in L^p(X, \mathcal B,\mu)$ we have
  \begin{align}
   \label{eq:72}
    \bigg\|\Big(\sum_{n\in \ZZ}V_2\big(T_t
    f:t\in Z_n\big)^2\Big)^{1/2}\bigg\|_{L^p}\le C_{p}\|f\|_{L^p}.
  \end{align}
\end{proposition}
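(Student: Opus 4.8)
The plan is to split \eqref{eq:72} into a fast-decaying $L^2$ estimate and a uniform $L^p$ estimate for auxiliary square functions indexed by $j\in\ZZ$, and then to interpolate and sum over $j$. Since $f=\sum_{n\in\ZZ}S_nf$ in $L^2$, and, by \eqref{eq:1}, also in $L^p$ for $p\in(1,2]$, I would write, for $t\in Z_n$, $T_tf=\sum_{j\in\ZZ}T_tS_{n+j}f$; then, since $V_2(\,\cdot\,:t\in Z_n)$ is a seminorm, Minkowski's inequality (first in $\ell^2_n$, then in $L^p$) gives $\big\|\big(\sum_nV_2(T_tf:t\in Z_n)^2\big)^{1/2}\big\|_{L^p}\le\sum_{j\in\ZZ}\|\Theta_jf\|_{L^p}$, where $\Theta_jf:=\big(\sum_nV_2(T_tS_{n+j}f:t\in Z_n)^2\big)^{1/2}$. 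Hence it is enough to prove $\sum_{j\in\ZZ}\|\Theta_jf\|_{L^p}\lesssim_p\|f\|_{L^p}$ for every $p\in(p_0,2]$, which I would deduce from two bounds on $\Theta_j$.

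The first bound, on $L^2$, is precisely the hypothesis \eqref{eq:92}: $\|\Theta_jf\|_{L^2}\le C_4a_j\|f\|_{L^2}$. The second is a uniform-in-$j$ bound $\|\Theta_jf\|_{L^p}\lesssim_p\|f\|_{L^p}$ for every $p\in(p_0,2]$. Here I would argue as follows. Since $p\le2$ one has $V_2\le V_p$ pointwise, so $\Theta_jf\le\big(\sum_nV_p(T_tS_{n+j}f:t\in Z_n)^2\big)^{1/2}$, and the task reduces to the $\ell^2$-valued variational inequality $\big\|\big(\sum_nV_p(T_tg_n:t\in Z_n)^2\big)^{1/2}\big\|_{L^p}\lesssim_p\big\|\big(\sum_n|g_n|^2\big)^{1/2}\big\|_{L^p}$ for $p\in(p_0,2]$, applied with $g_n=S_{n+j}f$: once this is available, $\sum_n|S_{n+j}f|^2=\sum_m|S_mf|^2$ and the Littlewood--Paley bound \eqref{eq:1} immediately give $\|\Theta_jf\|_{L^p}\lesssim_p\|f\|_{L^p}$, uniformly in $j$. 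The $\ell^2$-valued variational inequality is where the positivity of the $T_t$ is essential: for a single positive operator a vector-valued extension is automatic by pointwise domination, and one leverages this together with the scalar variational bound \eqref{eq:33} and the scalar maximal bound \eqref{eq:11} (the latter, via the pointwise comparison $V_2\le V_\infty^{(2-p)/2}V_p^{p/2}$, controls the coarse oscillation part of each dyadic block) to obtain the vector-valued estimate.

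With the two bounds in hand, I fix $p\in(p_0,2)$ and pick $p_1\in(p_0,p)$. As $\Theta_j$ is subadditive, interpolating (Marcinkiewicz) between the uniform $L^{p_1}$ bound and $\|\Theta_jf\|_{L^2}\le C_4a_j\|f\|_{L^2}$ yields $\|\Theta_jf\|_{L^p}\lesssim_p a_j^{\theta}\|f\|_{L^p}$ with $\theta=\theta(p_0,p)\in(0,1)$ given by $\tfrac1p=\tfrac{1-\theta}{p_1}+\tfrac{\theta}{2}$. Summing over $j$ and using $\sum_{j\in\ZZ}a_j^{\theta}=A_\theta<\infty$ gives $\sum_j\|\Theta_jf\|_{L^p}\lesssim_p\|f\|_{L^p}$, i.e.\ \eqref{eq:72}; for the endpoint $p=2$ one uses \eqref{eq:92} directly, together with $A_1<\infty$. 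A routine density argument reduces everything to $f\in L^2\cap L^p$.

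The step I expect to be the main obstacle is the uniform $L^p$ bound for $\Theta_j$, and more precisely the $\ell^2$-valued upgrade of the scalar maximal and variational estimates \eqref{eq:11}, \eqref{eq:33}: unlike for a single positive operator, a maximal or variational operator built from a whole family $(T_t)$ does not automatically admit a vector-valued extension, so one has to combine the positivity of the $T_t$ with the dyadic-block structure of the $Z_n$ in an essential way. The rest --- the Littlewood--Paley decomposition, Minkowski's inequality, and the interpolation and summation in $j$ --- is routine.
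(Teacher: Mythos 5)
Your high-level strategy matches the paper's: split by the Littlewood--Paley decomposition, set $\Theta_j f = \big(\sum_n V_2(T_t S_{n+j}f:t\in Z_n)^2\big)^{1/2}$, use \eqref{eq:92} for the decaying $L^2$ bound, prove a uniform-in-$j$ bound at some $L^{p_1}$ with $p_1\in(p_0,p)$, interpolate, and sum in $j$. The endgame (Marcinkiewicz/Riesz--Thorin for the sublinear $\Theta_j$, then $\sum_j a_j^\theta<\infty$) is fine.

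The genuine gap is exactly the step you flag as the ``main obstacle,'' and the route you gesture at does not close it. The pointwise comparison $V_2\le V_\infty^{(2-p)/2}V_p^{p/2}$ combined with H\"older would pair the $V_\infty$ factor with the maximal bound \eqref{eq:11} in $L^p(\ell^\infty)$, but it pairs the $V_p$ factor with $\ell^p$-in-$n$ rather than $\ell^2$-in-$n$: one ends up needing $\big\|(\sum_n|S_{n+j}f|^q)^{1/q}\big\|_{L^q}\lesssim\|f\|_{L^q}$ for some $q<2$, whereas the Littlewood--Paley hypothesis \eqref{eq:1} only controls the $\ell^2$ square function. For $q<2$ the $\ell^q$ square function dominates the $\ell^2$ one, so this estimate is not available from \eqref{eq:1}, and the chain of inequalities does not produce the desired $\ell^2$-valued conclusion. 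In short, the pointwise comparison interpolates the variational \emph{index} but does not, by itself, interpolate the \emph{outer} $\ell^s$ structure in $n$, which is precisely what is needed.

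The paper closes this gap with a specific device you don't mention: it \emph{linearizes} the $V_2$ seminorm by selecting, for each $n$, finitely many measurable stopping times $t_0^n(x)<\dots<t_L^n(x)$ in $Z_n$, replacing $V_2$ by $\big(\sum_l|(T_{t_{l+1}^n}-T_{t_l^n})g_n|^2\big)^{1/2}$. For a fixed choice of stopping times this is a linear operator of $(g_n)_n$, so one can apply genuine vector-valued (Riesz--Thorin) interpolation of the constants $A_N(p,r,s)$: the hypothesis \eqref{eq:33} gives $A_N(q,q,q)\lesssim1$, the maximal bound \eqref{eq:11} together with positivity gives $A_N(p,\infty,\infty)\lesssim1$, and interpolating these two yields the $L^u(\ell^2(\ell^2))$ bound $A_N(u,2,2)\lesssim1$ for a suitable $u\in(q,p)$ --- which is the uniform-in-$j$ bound you need, with constants independent of the number $L$ of stopping times and hence of the truncation. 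Taking suprema over stopping times and then over $L,N$ recovers the bound for $\Theta_j$. This linearization is the step that turns the ``interpolation of indices'' heuristic behind your pointwise comparison into an actual $\ell^2(\ell^2)$-valued estimate, and it is the part of the proof your proposal is missing. If you want to carry out your plan, replace the pointwise-comparison paragraph by this linearization-plus-interpolation argument; the rest of your outline then goes through.
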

\begin{proof}
Let us fix $p\in(p_0, 2)$. We will show that there is $C_p>0$
  such that for every $L, N\in\NN$ we have 
  \begin{align}
    \label{eq:73}
    \bigg\|\Big(\sum_{|n|\le N}V_2^L\big(T_t
    f:t\in Z_n\big)^2\Big)^{1/2}\bigg\|_{L^p}\le C_{p}\|f\|_{L^p}.
  \end{align}
  Letting $N\to\infty$ and $L\to\infty$ and invoking the monotone
  convergence theorem we obtain \eqref{eq:72}.
  
  To prove \eqref{eq:73}
  we reduce the problem to study a certain family of linear operators.
  For this purpose, for every $n\in\ZZ$, let $\mathfrak T_L^n$ be a
  family of all sequences $\mathfrak t^n_L=(t_l^n: 0\le l\le L)$ such
  that each component $t_l^n: \RR^d\to Z_n$ is a measurable function
  and $t_0^n(x)<t_1^n(x)<\ldots<t_L^n(x)$ for any $x\in\RR^d$.
We will prove that there is  $C_p>0$
  such that for every $L, N\in\NN$ we have 
  \begin{align}
\label{eq:23}
    \sup_{\mathfrak t_L^{-N}\in\mathfrak T_L^{-N}}\ldots\sup_{\mathfrak t_L^N\in\mathfrak T_L^N}\bigg\|\Big(\sum_{|n|\le N}\sum_{l=0}^{L-1}\big|(T_{t_{l+1}^n}-T_{t_{l}^n})
    f\big|^2\Big)^{1/2}\bigg\|_{L^p}\le C_{p}\|f\|_{L^p}.
  \end{align}
  Suppose for the moment that \eqref{eq:23} is proven. We will show how
  it implies \eqref{eq:73}. For every integer $|n|\le N$, let
  $\mathfrak t^n_{L, f}=(t_{l, f}^n: 0\le l\le L)$ be a sequence of
  measurable functions with the properties as above and additionally
  satisfying 
  \[
  V_2^L\big(T_t f(x):t\in Z_n\big)^2=\sum_{l=0}^{L-1}\big|(T_{t_{l+1,
      f}^n(x)}-T_{t_{l, f}^n(x)}) f(x)\big|^2.
  \]
  Then, assuming \eqref{eq:23}, we obtain
  \begin{align*}
    \bigg\|\Big(\sum_{|n|\le N}V_2^L\big(T_t
    f:t\in Z_n\big)^2\Big)^{1/2}\bigg\|_{L^p}&=\bigg\|\Big(\sum_{|n|\le N}\sum_{l=0}^{L-1}\big|(T_{t_{l+1,
                                               f}^n}-T_{t_{l, f}^n}) f\big|^2\Big)^{1/2}\bigg\|_{L^p}\\
    &\le\sup_{\mathfrak t_L^{-N}\in\mathfrak T_L^{-N}}\ldots\sup_{\mathfrak t_L^N\in\mathfrak T_L^N}\bigg\|\Big(\sum_{|n|\le N}\sum_{l=0}^{L-1}\big|(T_{t_{l+1}^n}-T_{t_{l}^n})
      f\big|^2\Big)^{1/2}\bigg\|_{L^p}\\
    &\le C_{p}\|f\|_{L^p},
  \end{align*}
  and \eqref{eq:73} follows. Thus it suffices to prove \eqref{eq:23}.
  
  Due to \eqref{eq:33} we see that \eqref{eq:23} holds with the
  constant $(2N+1)C_{3, p}$. Our task now will be to show that
  \eqref{eq:23} holds with the constant which is independent of
  $L, N\in\NN$.  For this purpose fix
  $\mathfrak t_L^{-N}\in\mathfrak T_L^{-N}, \ldots, \mathfrak
  t_L^{N}\in\mathfrak T_L^{N}$, and for $p\in(p_0, 2)$,
  $r\in[p, \infty]$ and $s\in[1, \infty]$, let $A_N(p, r, s)$ denote
  the best constant in the following inequality
\begin{align}
\label{eq:74}
  \bigg\lVert\bigg(\sum_{|n|\le N}\Big(\sum_{l=0}^{L-1}\big|(T_{t_{l+1}^n}-T_{t_{l}^n})
      g_n\big|^r\Big)^{s/r}
  \bigg)^{1/s}
  \bigg\rVert_{L^p}\le A_N(p, r, s)\bigg\|\Big(\sum_{|n|\le N}|g_n|^s\Big)^{1/s}\bigg\|_{L^p}.
\end{align}
Let $q$ be a real number such that  $p_0<q<p<2$ and define $\theta\in(0, 1)$ by setting
    \[
\frac{1}{2}=\frac{1-\theta}{q}+\frac{\theta}{\infty}.
      \]
      This implies that $\theta=1-q/2$ and consequently
      determines $u\in(q, p)$ such that
      \[
\frac{1}{u}=\frac{1-\theta}{q}+\frac{\theta}{p}.
\]
Therefore, interpolation
between $L^{q}\big(\ell^{q}(\ell^{q})\big)$ and
$L^{p}\big(\ell^{\infty}(\ell^{\infty})\big)$
ensures that
\[
A_N(u, 2, 2)\le A_N(q, q, q)^{1-\theta}A_N(p, \infty, \infty)^{\theta}.
\]
Invoking \eqref{eq:33} we have $A_N(q, q, q)\le C_{3, q}$, since
\begin{align*}
\bigg\lVert\Big(\sum_{|n|\le N}\sum_{l=0}^{L-1}\big|(T_{t_{l+1}^n}-T_{t_{l}^n})
      g_n\big|^q
  \Big)^{1/q}
  \bigg\rVert_{L^q}&\le 
  \bigg\lVert\Big(\sum_{|n|\le N}
  V_{q}\big(  T_t g_n: t\in  Z_n\big)^{q}\Big)^{1/{q}}
                     \bigg\rVert_{L^{q}}\\
  &\le C_{3, q}\bigg\|\Big(\sum_{|n|\le N}|g_n|^{q}\Big)^{1/{q}}\bigg\|_{L^{q}}.
\end{align*}
Let $g=\sup_{|n|\le N}|g_n|$ and observe that $A_N(p, \infty,
\infty)\le 2C_{2, p}$, since by \eqref{eq:11}, we obtain
\begin{align*}
\big\|\sup_{|n|\le N}\sup_{0\le l\le L-1}\big|(T_{t_{l+1}^n}-T_{t_{l}^n})
      g_n\big|\big\|_{L^p}\le2\big\|\sup_{t\in Z}T_tg
                   \big\|_{L^p}
  \le 2C_{2, p}\|g\|_{L^p}.
\end{align*}
Thus
\begin{align}
  \label{eq:75}
  A_N(u, 2, 2)\le A_N(q, q, q)^{1-\theta}A_N(p, \infty,
  \infty)^{\theta}\le
  C_{3, q}^{1-\theta}(2C_{2, p})^{\theta}.
\end{align}
By \eqref{eq:74}, \eqref{eq:1} and \eqref{eq:75} we get
\begin{align}
  \label{eq:76}
  \begin{split}
\bigg\lVert\Big(\sum_{|n|\le N}\sum_{l=0}^{L-1}\big|(T_{t_{l+1}^n}-T_{t_{l}^n})
      S_{j+n}f\big|^2
  \Big)^{1/2}
  \bigg\rVert_{L^u}
  &\le A_N(u, 2, 2)\bigg\|\Big(\sum_{|n|\le N}|S_{j+n}f
                                                      |^2\Big)^{1/2}\bigg\|_{L^u}\\
  &\le C_{1, u}C_{3, q}^{1-\theta}(2C_{2, p})^{\theta}\|f\|_{L^u}.
  \end{split}
\end{align}
By \eqref{eq:92} we get
\begin{align}
  \label{eq:25}
  \begin{split}
  \bigg\lVert\Big(\sum_{|n|\le N}\sum_{l=0}^{L-1}\big|(T_{t_{l+1}^n}-T_{t_{l}^n})
      S_{j+n}f\big|^2
  \Big)^{1/2}
  \bigg\rVert_{L^2}&\le
  \bigg\|\Big(\sum_{|n|\le N}V_2\big(T_t
  S_{j+n}f:t\in Z_n\big)^2\Big)^{1/2}\bigg\|_{L^2}\\
  &\le C_4a_j\|f\|_{L^2}.
  \end{split}
\end{align}
    Now let us introduce $\rho\in(0, 1)$ obeying
    \[
\frac{1}{p}=\frac{1-\rho}{u}+\frac{\rho}{2}.
      \]
Interpolating \eqref{eq:76} with \eqref{eq:25} we have
\begin{align}
  \label{eq:77}
  \bigg\lVert\Big(\sum_{|n|\le N}\sum_{l=0}^{L-1}\big|(T_{t_{l+1}^n}-T_{t_{l}^n})
      S_{j+n}f\big|^2
  \Big)^{1/2}
  \bigg\rVert_{L^p}\le \Big(C_{1, u}C_{3, q}^{1-\theta}(2C_{2, p})^{\theta}\Big)^{1-\rho}C_4^{\rho}a_j^{\rho}\|f\|_{L^p}.
\end{align}
Summing \eqref{eq:77} over $j\in\ZZ$ we obtain 
\begin{align*}
    \bigg\lVert\Big(\sum_{|n|\le N}\sum_{l=0}^{L-1}\big|(T_{t_{l+1}^n}-T_{t_{l}^n})
      f\big|^2
  \Big)^{1/2}
  \bigg\rVert_{L^p}\le \Big(C_{1, u}C_{3, q}^{1-\theta}(2C_{2, p})^{\theta}\Big)^{1-\rho}C_4^{\rho}A_{\rho}\|f\|_{L^p}.
\end{align*}
which proves \eqref{eq:23} and
 completes the proof of Proposition \ref{prop:2}.
\end{proof}

\subsection{The Poisson semigroup}
As in \cite{B1} we shall exploit dimension-free bounds for the
Poisson semigroup $P_t$ which,  for every $\xi \in \RR^d$, satisfies
  $$\calF (P_t f)(\xi)=p_t(\xi)\calF (f)(\xi),$$
  where
  \[
p_t(\xi)=e^{-2\pi t|\xi|}.
\]
For every $x\in\RR^d$ let us introduce the maximal function 
 $$P_{\star}f(x)=\sup_{t>0}P_t|f|(x)$$
 and the square function
$$g(f)(x)=\left(\int_{0}^{\infty}t\Big|\frac{\rm d}{{\rm d}t} P_t
  f(x)\Big|^2{\rm d}t\right)^{1/2}$$ associated with the Poisson
semigroup. We know from \cite{Ste1} that for every $p\in(1, \infty)$
there exists a constant  $C_p>0$ independent of the dimension such that
for every $f\in L^p(\RR^d)$ we have
\begin{align}
  \label{eq:78}
\|P_{\star}f\|_{L^p}\leq C_p\|f\|_{L^p}  
\end{align}
and
\begin{align}
  \label{eq:79}
  \|g(f)\|_{L^p}\leq C_p\|f\|_{L^p}.
\end{align}
We will also need some variant of the Littlewood--Paley
inequality. Namely, we define for every $n\in\ZZ$ the projections
  $S_n=P_{L 2^n}-P_{L 2^{n-1}}$ corresponding to the  Poisson
  semigroup, where $L=L(G)$ is the constant defined in \eqref{eq:68}.
 With this definition, we clearly have, for every $f\in L^2(\RR^d)$, that
 \begin{align}
   \label{eq:80}
f=\sum_{n\in \ZZ} S_n f.   
 \end{align}
 Moreover, for each $n\in\ZZ$ and $x\in\ZZ^d$ we see
 $$S_nf(x)=\int_{L2^{n-1}}^{L2^n}\frac{\rm d}{{\rm d}t} P_t f(x){\rm
   d}t.$$
  Hence, by the Cauchy--Schwarz inequality we obtain 
\begin{align*}
|S_nf(x)|^2\leq \bigg(\int_{L2^{n-1}}^{L2^n}\Big|\frac{\rm d}{{\rm d}t} P_t
  f(x)\Big|{\rm d}t\bigg)^2\le L2^{n-1}\int_{L2^{n-1}}^{L2^n}\Big|\frac{\rm d}{{\rm d}t} P_t
  f(x)\Big|^2{\rm d}t \le \int_{L2^{n-1}}^{L2^n}t\Big|\frac{\rm d}{{\rm d}t} P_t
  f(x)\Big|^2{\rm d}t.
\end{align*}
Now summing over $n\in\ZZ$ and appealing to \eqref{eq:79} we obtain that
the following dimension-free Littlewood--Paley inequality
\begin{align}
  \label{eq:81}
   \bigg\|\Big(\sum_{n\in \ZZ}|S_n f|^2\Big)^{1/2}\bigg\|_{L^p}\leq \|g(f)\|_{L^p}\le C_p\|f\|_{L^p},
 \end{align}
for every $f\in L^p(\RR^d)$.

 Finally, since the Poisson semigroup $P_t$ is Markovian,
 \cite[Theorem 3.3]{JR1} states that for every $p\in(1, \infty)$ and
 every $r\in(2, \infty)$ there is a constant $C_{p, r}>0$ independent
 of the dimension such that for every $f\in L^p(\RR^d)$ we have
\begin{equation}
\label{eq:2}
\big\|	V_r\big( P_{t} f: t>0\big)\big\|_{L^p}\leq C_{p, r}\|f\|_{L^p}.
\end{equation}
We include the proof of \eqref{eq:2} for completeness. It suffices to prove that
there is a constant $C_{p, r}>0$ such that for every $M\in\NN$ we have
\begin{equation}
\label{eq:28}
\big\|	V_r\big( P_{t} f: t\in \mathcal D_M\big)\big\|_{L^p}\leq C_{p, r}\|f\|_{L^p},
\end{equation}
where $\mathcal D_M=\{n/2^M: n\ge0\}$. Then letting $M\to\infty$ in \eqref{eq:28}, by the
monotone convergence theorem, we obtain  \eqref{eq:2}.
To prove \eqref{eq:28} we recall 
Rota's theorem from \cite{Ste1}.

\begin{theorem}[Rota's theorem]
Assume that $(X, \mathcal B, \mu)$ is a $\sigma$-finite measure space
and let $Q$ be a linear operator defined on
$\bigcup_{1\le p\le \infty}L^p(X, \mathcal B, \mu)$ satisfying the
following conditions:
\begin{itemize}
\item $\|Q\|_{L^p\to L^p}\le 1$ for every $p\in[1, \infty]$,
\item $Q=Q^*$ in $L^2(X, \mathcal B, \mu)$,
\item $Qf\ge0$ for every $f\ge0$,
\item $Q1=1$.
\end{itemize}
Then there exists a measure space $(\Omega, \mathcal F, \nu)$, a decreasing collection of $\sigma$-algebras
$
\ldots\subset  \mathcal F_{n+1}\subset \mathcal F_{n}\subset \ldots \subset \mathcal F_{1}\subset \mathcal F_{0}= \mathcal F
$
and another $\sigma$-algebra $\hat {\mathcal F}\subset \mathcal F$ with the following properties:
\begin{itemize}
\item[(i)] There exists a measure space isomorphism
$i: (X, \mathcal B, \mu)\to (\Omega, \hat{\mathcal F}, \nu)$,
which induces an isomorphism
$
\mathfrak i: L^p(X, \mathcal B, \mu)\to L^p(\Omega,\hat{\mathcal F},  \nu)
$
given by the formula
$\mathfrak i(f)(\omega)=f(i^{-1}\omega)$ for every $\omega\in\Omega$.
\item[(ii)] For every $f\in L^p(X,  \mathcal B, \mu)$ and for every $x\in X$ we have
\begin{align}
  \label{eq:27}
Q^{2n}(f)(x)=  Q^{2n}(\mathfrak i^{-1}F)(x)=\hat{\mathbb E}\big[\mathbb E[F|\mathcal F_n]\big|\hat{\mathcal F}\big](i(x)),
\end{align}
where $F=\mathfrak i (f)\in  L^p(\Omega,\hat{\mathcal F},  \nu)$.
\end{itemize}
Thus the operator $Q$ is associated with a family of reverse martingales $(\mathbb E[F|\mathcal F_n]: n\ge0)$.
\end{theorem}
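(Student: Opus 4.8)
The plan is to prove Rota's theorem by realizing $Q$ as the one-step transition operator of a stationary reversible Markov chain and then reading off the dilation identity \eqref{eq:27} from the Markov property together with reversibility; this is the classical argument of \cite{Ste1}, which I only sketch here.

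First I would pass to a standard Borel model of $(X, \mathcal B, \mu)$ and disintegrate $Q$ into a genuine Markov transition kernel $Q(x, {\rm d}y)$ with $Q(x, X) = 1$ and $Qf(x) = \int_X f(y)\, Q(x, {\rm d}y)$; here one uses positivity, the $L^1$-contractivity and $Q1 = 1$. The self-adjointness $Q = Q^*$ on $L^2$ then translates into the reversibility relation $Q(x, {\rm d}y)\, {\rm d}\mu(x) = Q(y, {\rm d}x)\, {\rm d}\mu(y)$ on $X \times X$; note also that $Q1 = 1$ together with $Q = Q^*$ forces $\mu$ to be $Q$-invariant. For the applications in this paper $Q = P_t$ for small $t$, so the kernel is just the symmetric Poisson convolution kernel and this step is immediate.

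Next, by the Kolmogorov extension theorem I would build a measure space $(\Omega, \mathcal F, \nu)$ carrying a stationary Markov chain $(X_k)_{k \ge 0}$ with one-step kernel $Q$ and each $X_k$ distributed according to $\mu$ (the $\sigma$-finite stationary case presents no difficulty). Set $\mathcal F_n = \sigma(X_n, X_{n+1}, \ldots)$, so that $\mathcal F_0 = \mathcal F \supset \mathcal F_1 \supset \cdots$, let $\hat{\mathcal F} = \sigma(X_0)$, and take $i$ to be the coordinate map $X_0$, which identifies $(\Omega, \hat{\mathcal F}, \nu)$ with $(X, \mathcal B, \mu)$ and yields $\mathfrak i(f) = f \circ X_0$. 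To verify \eqref{eq:27} with $F = \mathfrak i(f)$: the forward Markov property gives $\mathbb E[F\,|\,\mathcal F_n] = \mathbb E[f(X_0)\,|\,X_n]$, and reversibility says the reversed chain $(X_n, X_{n-1}, \ldots, X_0)$ is again Markov with kernel $Q$, whence $\mathbb E[f(X_0)\,|\,X_n] = (Q^n f)(X_n)$; running the forward chain once more from $X_0 = x$ gives $\hat{\mathbb E}\big[(Q^n f)(X_n)\,|\,X_0 = x\big] = (Q^n Q^n f)(x) = Q^{2n} f(x)$, which is exactly \eqref{eq:27}. Since $(\mathcal F_n)$ is decreasing, $\big(\mathbb E[F\,|\,\mathcal F_n] : n \ge 0\big)$ is a reverse martingale, which is the final assertion.

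The main obstacle is the first step: producing an honest transition kernel and extracting reversibility from $L^2$-symmetry on a general $\sigma$-finite measure space (and handling the passage to a standard Borel model). Everything after that --- the chain's existence, the identification of $\hat{\mathcal F}$ with $\mathcal B$, and the verification of \eqref{eq:27} --- is routine manipulation of conditional expectations and the Markov property.
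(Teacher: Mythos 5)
The paper does not prove Rota's theorem; it simply quotes the statement and cites \cite{Ste1} for the proof, then applies it. Your sketch correctly reconstructs the standard Markov-chain dilation argument that is the content of the cited proof: disintegrating $Q$ into a transition kernel, using $L^2$-self-adjointness together with $Q1=1$ to obtain reversibility of the kernel and $Q$-invariance of $\mu$, building the stationary chain $(X_k)_{k\ge 0}$ via Kolmogorov extension, taking $\mathcal F_n=\sigma(X_n, X_{n+1},\ldots)$ and $\hat{\mathcal F}=\sigma(X_0)$, and then computing $\hat{\mathbb E}\big[\mathbb E[f(X_0)\,|\,\mathcal F_n]\,\big|\,\hat{\mathcal F}\big]=\mathbb E\big[(Q^nf)(X_n)\,|\,X_0\big]=Q^{2n}f(X_0)$ by the Markov property and reversibility. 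This is essentially the same route as the source the paper relies on, so there is nothing to compare beyond noting that you have correctly identified and verified the key identity; the one technical step you flag (passing to a standard Borel model to extract an honest kernel) is indeed where the real work lies in general, though it is harmless for the application here, where $X=\RR^d$ with Lebesgue measure and $Q=P_{1/2^{M+1}}$ is given by an explicit symmetric kernel.
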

To prove \eqref{eq:28} we apply Rota's theorem with $Q=P_{1/2^{M+1}}$
and with $X=\RR^d$ and the $\sigma$-algebra $\mathcal B$ of all
Lebesgue measurable sets on $\RR^d$ and the Lebesgue measure $\mu=|\cdot|$
on $\RR^d$.
Indeed,
\begin{align*}
  \big\|	V_r\big( P_{t} f: t\in \mathcal D_M\big)\big\|_{L^p}
&  =\big\|	V_r\big( P_{n/2^M} f: n\ge0\big)\big\|_{L^p}
  =\big\|V_r\big( Q^{2n} f: n\ge0\big)\big\|_{L^p}\\
 & =\big\|V_r\big( \hat{\mathbb E}\big[\mathbb E[F|\mathcal F_n]\big|\hat{\mathcal F}\big]: n\ge0\big)\big\|_{L^p(\Omega,\hat{\mathcal F},\nu)}\\
&  \le\big\|\hat{\mathbb E}\big[V_r\big( \mathbb E[F|\mathcal F_n]: n\ge0\big)\big|\hat{\mathcal F}\big]\big\|_{L^p(\Omega,\hat{\mathcal F},\nu)}\\
& \le\big\|V_r\big( \mathbb E[F|\mathcal F_n]: n\ge0\big)\big\|_{L^p(\Omega,{\mathcal F},\nu)}\\
&  \leq C_{p, r}\|F\|_{L^p(\Omega,\mathcal F, \nu)}=C_{p, r}\|f\|_{L^p(X,\mu)},  
\end{align*}
where the last inequality follows from L\'epingle's inequality for martingales, see \cite{JSW} and the references given there.
As there is no 'dimension' in the proof of L\'epingle's inequality (it is purely
probabilistic) the implied constant $C_{p, r}$ in \eqref{eq:2} is
independent of the dimension.

\section{Estimates for long variations: proof of Theorem \ref{thm:3}}
\label{sec:3}
In this section we prove Theorem \ref{thm:3}. Fix $r\in(2, \infty)$
and a non-empty convex symmetric body $G$ in $\RR^d$. Recall that $M_t=M_t^G$ and $m=m^G$, where
$m^G(\xi)=\mathcal F(\ind{G})(\xi)$. The main ingredients in the proof
will be the $r$-variational estimate \eqref{eq:2} for the Poisson
semigroup $P_t$, the estimates for the multiplier $m$ associated with
$M_t$ from Proposition \ref{prop:1}, and the lacunary maximal result
from \cite{B2} or \cite{Car1}, which states that for every
$p\in(1, \infty]$ there exists $C_{p, \infty}>0$ such that every
$d\in\NN$ and for every convex body $G\subset \RR^d$ the following
inequality holds
\begin{align}
  \label{eq:82}
  \big\|\sup_{n\in\ZZ}|M_{2^n}f|\big\|_{L^p}\le C_{p, \infty}\|f\|_{L^p}
\end{align}
for all $f\in L^p(\RR^d)$.

\begin{proof}[Proof of Theorem \ref{thm:3}]

For every $f\in L^p(\RR^d)$  we obtain 
\begin{align}
\label{eq:83}
  \big\lVert
	V_r\big(  M_{ 2^n} f: n\in\ZZ\big)
	\big\rVert_{L^p}\le
  \big\lVert
	V_r\big( P_{L 2^n} f: n\in\ZZ\big)
	\big\rVert_{L^p}
  +\bigg\|\Big(\sum_{n\in\ZZ}\big| M_{2^n} f- 
 P_{L 2^n}f\big|^2\Big)^{1/2}\bigg\|_{L^p}.
\end{align}
The first term in \eqref{eq:83} is bounded on $L^p$ by
\eqref{eq:2}. Therefore, it remains to obtain $L^p$ bounds for the
square function in \eqref{eq:83}. For this purpose we will use
\eqref{eq:80}. Indeed, observe that
\begin{align}
  \label{eq:87}
  \begin{split}
\bigg\|\Big(\sum_{n\in\ZZ}\big| M_{2^n} f&- 
  P_{L2^n}f\big|^2\Big)^{1/2}\bigg\|_{L^p}
 =\bigg\|\Big(\sum_{n\in\ZZ}\big|\sum_{j\in \ZZ}  M_{2^n}S_{j+n}f - 
 P_{L2^n}S_{j+n}f\big|^2\Big)^{1/2}\bigg\|_{L^p}\\
 &\le \sum_{j\in \ZZ}\bigg\|\Big(\sum_{n\in\ZZ}\big|  M_{2^n} S_{j+n}f - 
 P_{L2^n}S_{j+n}f\big|^2\Big)^{1/2}\bigg\|_{L^p}\lesssim
 \sum_{j\in\ZZ}2^{-\delta_p|j|}\|f\|_{L^p}\lesssim \|f\|_{L^p}.    
  \end{split}
\end{align}
In order to justify the last but one inequality in \eqref{eq:87} we
shall prove, for each $j\in\ZZ$, that
\begin{equation}
\label{eq:13} 
\bigg\|\Big(\sum_{n\in\ZZ}\big|  M_{2^n} S_{j+n}f \big|^2\Big)^{1/2}\bigg\|_{L^p}+\bigg\|\Big(\sum_{n\in\ZZ}\big|
P_{L2^n}S_{j+n}f\big|^2\Big)^{1/2}\bigg\|_{L^p}\lesssim\|f\|_{L^p}.
\end{equation}
and
\begin{equation}
\label{eq:12} 
\bigg\|\Big(\sum_{n\in\ZZ}\big|  M_{2^n} S_{j+n}f - 
P_{L2^n}S_{j+n}f\big|^2\Big)^{1/2}\bigg\|_{L^2}\lesssim2^{-|j|/2}\|f\|_{L^2}.
\end{equation}
To prove \eqref{eq:13} we first show  the following dimension-free vector-valued bounds
\begin{align}
  \label{eq:29}
  \bigg\|\Big(\sum_{n\in\ZZ}\big|M_{2^n} g_n\big|^2\Big)^{1/2}\bigg\|_{L^p}\lesssim
  \bigg\|\Big(\sum_{n\in\ZZ}|g_n|^2\Big)^{1/2}\bigg\|_{L^p}
\end{align}
and
\begin{align}
\label{eq:30}
  \bigg\|\Big(\sum_{n\in\ZZ}\big|P_{L2^n} g_n\big|^2\Big)^{1/2}\bigg\|_{L^p}\lesssim
  \bigg\|\Big(\sum_{n\in\ZZ}|g_n|^2\Big)^{1/2}\bigg\|_{L^p}
\end{align}
for all $p\in(1, \infty)$. Then in view of \eqref{eq:29}, \eqref{eq:30} and \eqref{eq:81} we conclude
\begin{align*}
\bigg\|\Big(\sum_{n\in\ZZ}\big|  M_{2^n} S_{j+n}f \big|^2\Big)^{1/2}\bigg\|_{L^p}+\bigg\|\Big(\sum_{n\in\ZZ}\big|
P_{L2^n}S_{j+n}f\big|^2\Big)^{1/2}\bigg\|_{L^p}\lesssim\bigg\|\Big(\sum_{n\in\ZZ}\big|
S_{j+n}f\big|^2\Big)^{1/2}\bigg\|_{L^p}\lesssim\|f\|_{L^p},
\end{align*}
which proves \eqref{eq:13}.

The proof of \eqref{eq:29} and
\eqref{eq:30} follows respectively from \eqref{eq:82} and
\eqref{eq:78} and a vector-valued interpolation.  We only estimate
\eqref{eq:29}, the estimate in \eqref{eq:30} will be obtained
similarly. Indeed, for $p\in(1, \infty)$ and
$s\in[1, \infty]$, let $A(p, s)$ be the best constant in the following
inequality
\begin{align*}
  \bigg\|\Big(\sum_{n\in\ZZ}\big|M_{2^n} g_n\big|^s\Big)^{1/s}\bigg\|_{L^p}\le A(p, s)
  \bigg\|\Big(\sum_{n\in\ZZ}|g_n|^s\Big)^{1/s}\bigg\|_{L^p}.
\end{align*}
Then interpolation, duality ($A(p, s)=A(p', s')$), and \eqref{eq:82}  yield \eqref{eq:29}, since
\[
A(p, 2)\le A(p, 1)^{1/2}A(p, \infty)^{1/2}=A(p', \infty)^{1/2}A(p, \infty)^{1/2}\le C_{p', \infty}^{1/2}C_{p, \infty}^{1/2}.
\]

To prove \eqref{eq:12} let us introduce
$k(\xi)=m(\xi)-p_L(\xi)=m(\xi)-e^{-2\pi L|\xi|}$
    which is the multiplier associated with the operator $M_1-P_L$.
    Observe that by Proposition \ref{prop:1} and the properties of $p_L(\xi)$
    there exists a constant $C>0$ independent of the dimension such
    that
  \begin{align}
    \label{eq:88}
    |k(\xi)|\le |m(\xi)-1|+|p_L(\xi)-1|\le CL|\xi|, \qquad |k(\xi)|\le
    C(L|\xi|)^{-1}, \qquad |\langle\xi, \nabla k(\xi)\rangle|\le C,
  \end{align}
since $\langle\xi, \nabla p_L(\xi)\rangle=-2\pi L|\xi|e^{-2\pi
  L|\xi|}$. Therefore, by \eqref{eq:88} and Plancherel's theorem we
get
\begin{multline}
  \label{eq:71}
\bigg\|\Big(\sum_{n\in\ZZ}\big|  M_{2^n} S_{j+n}f - 
P_{L2^n}S_{j+n}f\big|^2\Big)^{1/2}\bigg\|_{L^2}\\
  =\bigg(\int_{\RR^d}\sum_{n\in
  \ZZ}\big|k(2^n\xi)\big(e^{-2\pi2^{n+j} L|\xi|}
  -e^{-2\pi2^{n+j-1} L|\xi|}\big)\big|^2
  |\mathcal Ff(\xi)|^2{\rm d}\xi\bigg)^{1/2}\\
  \lesssim\bigg(\int_{\RR^d}\sum_{n\in
  \ZZ}\min\big\{2^nL|\xi|, (2^nL|\xi|)^{-1}\big\}^2\big|\big(e^{-2\pi2^{n+j} L|\xi|}
  -e^{-2\pi2^{n+j-1} L|\xi|}\big)\big|^2
  |\mathcal Ff(\xi)|^2{\rm d}\xi\bigg)^{1/2}\\
  \lesssim2^{-|j|/2}\bigg(\int_{\RR^d}\sum_{n\in
  \ZZ}\min\big\{2^nL|\xi|, (2^nL|\xi|)^{-1}\big\}
  |\mathcal Ff(\xi)|^2{\rm d}\xi\bigg)^{1/2}
  \lesssim 2^{-|j|/2}\|f\|_{L^2},
\end{multline}
where we have used
\begin{align}
  \label{eq:89}
  \min\big\{2^nL|\xi|, (2^nL|\xi|)^{-1}\big\}\big|\big(e^{-2\pi2^{n+j} L|\xi|}
  -e^{-2\pi2^{n+j-1} L|\xi|}\big)\big|\lesssim 2^{-|j|}
\end{align}
and
\begin{align}
  \label{eq:90}
  \sum_{n\in
  \ZZ}\min\big\{2^nL|\xi|, (2^nL|\xi|)^{-1}\big\}\lesssim 1.
\end{align}
The proof of Theorem \ref{thm:3} is completed.
\end{proof}

\section{Estimates for short variations: proofs of Theorem
  \ref{thm:6} and Theorem \ref{thm:7}}
This section is devoted to prove Theorem
\ref{thm:6}, and Theorem \ref{thm:7}.
All these theorems will follow, in  view
  of \eqref{eq:21} and Theorem \ref{thm:3} which has been proven in
  the previous section, if we show that  for appropriate
parameters  $p$
the short variation seminorm is bounded, i.e.  for every function
$f\in L^p(\RR^d)$ the following inequality
\begin{align}
  \label{eq:91}
     \bigg\|\Big(\sum_{n\in\ZZ}V_2\big(M_{t}f: t\in[2^n,
   2^{n+1})\big)^2\Big)^{1/2}\bigg\|_{L^p}\le C_{p}\|f\|_{L^p}
\end{align}
holds
 with a constant $C_{p}>0$ which
does not depend on the dimension, where $M_t=M_t^G$ as in the previous section.

One of the  key tools in all of the proofs will be inequality \eqref{eq:20}
from Lemma \ref{lem:6}. This inequality and the Littlewood--Paley
decomposition \eqref{eq:80} will result in \eqref{eq:91} for $p\ge2$. The proof of
\eqref{eq:91} for $p\le2$ will additionally require an almost orthogonality
principle from Proposition \ref{prop:2}. Due to \eqref{eq:81} and the
maximal results from \cite{B2}, \cite{Car1}, \cite{Mul1} and \cite{B3}
we will have to only verify conditions \eqref{eq:33} and \eqref{eq:92}
from Proposition \ref{prop:2}.  The bound \eqref{eq:92} will
follow from inequality \eqref{eq:20}, the Littlewood--Paley
decomposition \eqref{eq:80}, and properties of the multiplier $m^G$
from Proposition \ref{prop:1}.  The bounds for \eqref{eq:33} will
require a more sophisticated argument, which is subsumed in Lemma
\ref{lem:7}. 

Now we need to recall some facts from \cite{Car1}, \cite{Mul1}, and \cite{B3}.
For $\alpha\in(0, 1)$ let $\calD^{\alpha}$ be the fractional derivative
\begin{equation}
\label{eq:104}
\calD^{\alpha}F(t)=\calD^{\alpha}_tF(t)=\calD^{\alpha}_uF(u)\big|_{u=t}=\calF_{\RR}\big((2i\pi \xi )^{\alpha}\calF_{\RR}^{-1}(F)(\xi)\big)(t) 
\end{equation}
for every $t\in \RR$. This formula gives a well defined tempered
distribution on $\RR.$ Note the resemblance of the fractional
derivative $\mathcal D^{\alpha}$ with its variant $D^{\alpha}$ used in
Proposition \ref{prop:chVr}. In fact, the version of that proposition,
with $\mathcal D^{\alpha}$ in place of $D^{\alpha}$, holds as well. Since
we do not explicitly need it, we will forgo a proof of this fact.

Simple computations show that for $t>0$ we have
\begin{align*}
\calD_t^{\alpha}m(t\xi)=\int_{\RR^d}(2\pi i
x\cdot\xi)^{\alpha}K(x)e^{-2\pi i tx\cdot\xi}{\rm d}x,
\end{align*}
where $m=m^G=\mathcal F(K_G)$, and $K(x)=K_G(x)=\ind{G}(x)$.
Moreover, \cite[Lemma 6.6]{DGM1} guarantees that
\begin{align}
  \label{eq:40}
\calD_t^{\alpha}m(t\xi)
=-\frac1{\Gamma(1-\alpha)}\int_t^{\infty}(u-t)^{-\alpha}\frac{{\rm d}}{{\rm d}u}m(u\xi){\rm d}u.
\end{align}
If $\mathcal P_u^{\alpha}$ is the operator associated with the multiplier
\[
\mathfrak p_u^{\alpha}(\xi)=u^{\alpha+1}\calD_v^{\alpha}\bigg(\frac{m(v\xi)}{v}\bigg)\bigg|_{v=u}
\]
for $\xi\in\RR^d$, then one can see that
\begin{align}
\label{eq:99}
M_tf(x)=\mathcal F^{-1}(m(t\xi)\mathcal F
f)(x)=\frac{1}{\Gamma(\alpha)}\int_t^{\infty}\frac{t}{u}
\bigg(1-\frac{t}{u}\bigg)^{\alpha-1}\mathcal P_{u}^{\alpha}f(x)\frac{{\rm d}u}{u}.
\end{align}
Carbery \cite{Car1} showed that for general symmetric convex bodies one has
\begin{align}
  \label{eq:34}
  \|\mathcal P_1^{\alpha} f\|_{L^p}\lesssim \|f\|_{L^p}+\|T_{(\xi\cdot\nabla)^{\alpha}m} f\|_{L^p},
\end{align}
where $T_{(\xi\cdot\nabla)^{\alpha}m} f$ is  the multiplier operator associated with the symbol
\[
(\xi\cdot\nabla)^{\alpha}m(\xi)=\calD_t^{\alpha}m(t\xi)|_{t=1}.
\]
The estimate from \eqref{eq:34} immediately implies that
\begin{align}
  \label{eq:38}
  \sup_{u>0}\|\mathcal P_u^{\alpha}\|_{L^p\to L^p}\lesssim_p 1+\|T_{(\xi\cdot\nabla)^{\alpha}m}\|_{L^p\to L^p},
\end{align}
since the
multipliers $\mathfrak p_u^{\alpha}$ are dilations of $\mathfrak p_1^{\alpha}$, i.e. $\mathfrak p_u^{\alpha}(\lambda\xi)=\mathfrak p_{\lambda u}^{\alpha}(\xi)$ for any $\lambda>0$. 
Using identity \eqref{eq:99} it follows from \cite{Car1}  that for a general symmetric convex body $G$, every
$p\in(3/2, \infty]$, and for every $f\in L^p(\RR^d)$ we have
\begin{align}
  \label{eq:32}
    \big\|\sup_{t>0}|M_{t}f|\big\|_{L^p}\lesssim_p\|f\|_{L^p},
\end{align}
with the implicit constant independent of the dimension and the
underlying body. The proof of \eqref{eq:32} consists of two steps.  In
the first step it was proven that the $L^p$ bundedness in
\eqref{eq:32} can be deduced from \eqref{eq:38} provided that
$\alpha>1/p$. Then using complex interpolation Carbery showed that there is $C_{\alpha, p}>0$ such that
\begin{align}
  \label{eq:54}
\|T_{(\xi\cdot\nabla)^{\alpha}m}\|_{L^p\to L^p}\lesssim_p C_{\alpha, p}  
\end{align}
for $\alpha=2-2/p$. Combining these two facts we obtain $1/p<2-2/p$
which is equivalent to $p>3/2$ and gives the desired range
in \eqref{eq:32}.

When $G=B_q$ is as in \eqref{eq:66} with $q\in[1,\infty)$, in
\cite{Mul1} it has been proven that for every $\alpha\in(1/2, 1)$ and
every $p\in(1, \infty)$ there is a constant $C_{\alpha, p, q}>0$
independent of $d$ such that
\begin{align}
\label{eq:98'}
\|T_{(\xi\cdot\nabla)^{\alpha}m} \|_{L^p\to L^p}\le C_{\alpha, p, q}.
\end{align}
The same estimate for
$G=B_{\infty}$ is justified in \cite{B3}. In fact, in both \cite{Mul1}
and \cite{B3} the estimate \eqref{eq:98'} boils down to controlling
the operator $T_{|\xi|m(\xi)}$ associated with the multiplier
$|\xi|m(\xi).$ In view of \eqref{eq:34} and \eqref{eq:98'},  for the
bodies studied in Theorem \ref{thm:7} we thus have for all
$p\in(1, \infty)$ and all $f\in L^p(\RR^d)$ that
\begin{align}
\label{eq:98}
\sup_{u>0}\big\|\mathcal P_u^{\alpha}f\big\|_{L^p}\lesssim_{\alpha, p}\|f\|_{L^p}.
\end{align}

Let $\eta$ be a smooth function on $\RR$ such that $0\le \eta(t)\le1$ and
\begin{align}
  \label{eq:3}
  \begin{split}
  \eta(t)=
  \begin{cases}
  1, & \text{ if } t\in [1,2],\\
  0, & \text{ if } t\not\in(1/2, 3).
  \end{cases}
  \end{split}
\end{align}

\begin{lemma}
\label{lem:7}
Let $\eta$ be a smooth function as in \eqref{eq:3}.
Then for  any $p\in(1, 2)$ and any $\alpha\in(1/p, 1)$ there is
$C_{\alpha, p}>0$ such that  for every
Schwartz function $f\in\mathcal S(\RR^d)$ we have
\begin{align}
  \label{eq:24}
  \big\|V_p\big((\eta(t)M_tf):t\in\RR\big)\big\|_{L^p(\RR^d)}\le C_{\alpha, p} \big(\|f\|_{L^p}+\big\|\eta(t)\mathcal F^{-1}\big(\mathcal D_t^{\alpha}m(t\xi)\mathcal Ff(\xi)\big)\big\|_{L^p(\RR\times\RR^d)}\big).
\end{align}
Moreover, we have
\begin{align}
\label{eq:4}
  \big\|V_p\big((\eta(t)M_tf):t\in\RR\big)\big\|_{L^p(\RR^d)}
  \le C_{\alpha, p} \big(\|f\|_{L^p}+
  \sup_{t>0}\big\|\mathcal F^{-1}\big({((t\xi) \cdot \nabla)^{\alpha}m(t\xi)}\mathcal Ff(\xi)\big)\big\|_{L^p}\big).
\end{align}
\end{lemma}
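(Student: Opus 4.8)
The plan is to establish \eqref{eq:24} first and then deduce \eqref{eq:4} from it. The starting point for \eqref{eq:24} is the characterization of $r$-variation by fractional derivatives, namely part (i) of Proposition \ref{prop:chVr}, applied pointwise in $x\in\RR^d$ to the function $t\mapsto \eta(t)M_tf(x)$, which has compact support in $t$ (contained in $(1/2,3)$ by \eqref{eq:3}). For each fixed $x$ this gives
\[
V_p\big((\eta(t)M_tf(x)):t\in\RR\big)\le C_p\big(\|\eta(t)M_tf(x)\|_{L^p_t(\RR)}+\|D^{\alpha}_t(\eta(t)M_tf(x))\|_{L^p_t(\RR)}\big),
\]
valid since $\alpha>1/p$. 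Raising to the $p$-th power and integrating in $x$, then using Fubini, reduces the problem to bounding $\|\eta(t)M_tf\|_{L^p(\RR\times\RR^d)}$ and $\|D^{\alpha}_t(\eta(t)M_tf)\|_{L^p(\RR\times\RR^d)}$. The first term is controlled by $\|f\|_{L^p}$ trivially (the $t$-integral runs over a bounded set and $M_t$ is an $L^p$ contraction for each $t$). For the second term I would invoke the Leibniz-type rule for $D^{\alpha}$: $D^{\alpha}_t(\eta(t)M_tf)$ splits into a main piece $\eta(t)D^{\alpha}_t(M_tf)=\eta(t)\mathcal F^{-1}(\mathcal D_t^{\alpha}m(t\xi)\mathcal Ff(\xi))$ — up to the harmless distinction between $D^{\alpha}$ and $\mathcal D^{\alpha}$, whose symbols differ only by a unimodular factor — plus lower-order error terms where at most $\alpha$ derivatives land on the smooth cutoff $\eta$ and the rest on $M_tf$; those error terms involve fractional integrals of $M_tf$ against kernels coming from derivatives of $\eta$, and are bounded by $C_{\alpha,p}\|f\|_{L^p}$ using the rapid decay / integrability estimates on the relevant kernels (this is analogous to the Bessel-kernel manipulations already carried out in the proof of Proposition \ref{prop:chVr}). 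Collecting these bounds yields \eqref{eq:24}.

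\textbf{Deriving \eqref{eq:4} from \eqref{eq:24}.} It suffices to dominate the mixed-norm quantity appearing on the right of \eqref{eq:24} by the supremum quantity in \eqref{eq:4}. Writing out the symbol, for $t$ in the support of $\eta$ we have by scaling $\mathcal D_t^{\alpha}m(t\xi)=t^{-\alpha}\big((t\xi)\cdot\nabla\big)^{\alpha}m(t\xi)$ in the notation of \eqref{eq:104}, since $\mathcal D_t^{\alpha}m(t\xi)\big|_{t=1}=((\xi\cdot\nabla)^{\alpha}m)(\xi)$ and the left-hand side is homogeneous of degree $\alpha$ in the joint variable under $t\mapsto\lambda t$, $\xi\mapsto\xi/\lambda$. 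Hence for $t\in(1/2,3)$,
\[
\big\|\mathcal F^{-1}\big(\mathcal D_t^{\alpha}m(t\xi)\mathcal Ff(\xi)\big)\big\|_{L^p(\RR^d)}\lesssim_{\alpha}\big\|\mathcal F^{-1}\big(((t\xi)\cdot\nabla)^{\alpha}m(t\xi)\mathcal Ff(\xi)\big)\big\|_{L^p(\RR^d)}\le\sup_{t>0}\big\|\mathcal F^{-1}\big(((t\xi)\cdot\nabla)^{\alpha}m(t\xi)\mathcal Ff(\xi)\big)\big\|_{L^p}.
\]
Integrating the $p$-th power over the bounded $t$-set $\{1/2<t<3\}$ and taking $p$-th roots gives
\[
\big\|\eta(t)\mathcal F^{-1}\big(\mathcal D_t^{\alpha}m(t\xi)\mathcal Ff(\xi)\big)\big\|_{L^p(\RR\times\RR^d)}\lesssim_{\alpha,p}\sup_{t>0}\big\|\mathcal F^{-1}\big(((t\xi)\cdot\nabla)^{\alpha}m(t\xi)\mathcal Ff(\xi)\big)\big\|_{L^p},
\]
and substituting this into \eqref{eq:24} yields \eqref{eq:4}.

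\textbf{Main obstacle.} The routine parts are the application of Proposition \ref{prop:chVr}(i) and the scaling identity for the symbol; the delicate point is the Leibniz decomposition of $D^{\alpha}_t(\eta(t)M_tf)$ and, in particular, showing that every error term in which some fractional amount of the derivative falls on $\eta$ contributes only $C_{\alpha,p}\|f\|_{L^p}$ with a constant independent of the dimension $d$. One has to be careful that the implicit constant comes purely from one-dimensional manipulations in the $t$-variable (derivatives of $\eta$, the Bessel kernel $G_{\alpha}$ and its derivative, and their integrability exponents governed by $\alpha>1/p$) and never from a $d$-dependent bound on $M_t$ itself — only the trivial $L^p$-contractivity of $M_t$ is used. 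Making this splitting precise, e.g. by representing $D^{\alpha}$ through the Bessel potential $J_{\alpha}$ exactly as in the proof of Proposition \ref{prop:chVr} and differentiating the product under the convolution, is where the real work lies; everything else is bookkeeping.
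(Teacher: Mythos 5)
Your plan coincides with the paper's in all four stages: apply Proposition~\ref{prop:chVr}(i) pointwise in $x$ with Fubini; bound the zeroth-order term $\|\eta M_t f\|_{L^p(\RR\times\RR^d)}$ by the trivial $L^p$-contractivity of $M_t$; prove the commutator estimate $\|\mathcal D^{\alpha}_t(\eta M_t f)\|_{L^p(\RR\times\RR^d)}\lesssim\|f\|_{L^p}+\|\eta\,\mathcal D^{\alpha}_t M_tf\|_{L^p(\RR\times\RR^d)}$; and derive~\eqref{eq:4} from~\eqref{eq:24} via the scaling identity $\mathcal D^{\alpha}_s m(s\xi)|_{s=t}=t^{-\alpha}\big((t\xi)\cdot\nabla\big)^{\alpha}m(t\xi)$ together with the boundedness of $\supp\eta$. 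The one place you gesture and the paper is concrete is the commutator step: you propose a Leibniz rule for $D^{\alpha}$ via Bessel potentials, whereas the paper uses the Marchaud-type representation (from \cite[Lemma 6.6]{DGM1}) $\mathcal D_t^{\alpha}h(t)=-\frac{\alpha}{\Gamma(1-\alpha)}\lim_{\varepsilon\to0}\int_{\varepsilon}^{\infty}u^{-\alpha-1}\big(h(t+u)-h(t)\big)\,{\rm d}u$, under which the commutator is the single explicit integral $E(t,x)=\int_0^{\infty}K(t,u)M_{t+u}f(x)\,{\rm d}u$ with $K(t,u)=-\frac{\alpha}{\Gamma(1-\alpha)}\frac{\eta(t+u)-\eta(t)}{u^{\alpha+1}}$, obeying $|K(t,u)|\lesssim\min\{u^{-\alpha},u^{-\alpha-1}\}$; Minkowski's integral inequality and $\sup_t\|M_tf\|_{L^p}\le\|f\|_{L^p}$ then give $\|E\|_{L^p(\RR\times\RR^d)}\lesssim\|f\|_{L^p}$ with constants that are purely one-dimensional. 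This is cleaner than a generic fractional Leibniz theorem because no cross terms with fractional derivatives landing on both factors ever appear. One small imprecision in your write-up: the distinction between $D^{\alpha}$ and $\mathcal D^{\alpha}$ is not a single unimodular constant --- the factor $e^{\pm i\pi\alpha/2}$ depends on $\mathrm{sgn}(\xi)$ --- so passing between them uses the $L^p(\RR)$ boundedness of the one-dimensional Riesz projections $\Pi_{\pm}$, which the paper spells out before the commutator step.
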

\begin{proof}
In the proof we abbreviate $F(t,x)=\eta(t)M_{t}f(x).$ Note that Proposition
\ref{prop:chVr} and Fubini's theorem give, for $\alpha>1/p$, that
\begin{equation}
\label{eq:5}
\big\|V_p\big(  F(t,\cdot) : t\in \RR\big)\big\|_{L^p}\lesssim \|F\|_{L^p(\RR\times \RR^d)}+\|D^{\alpha}_tF\|_{L^p(\RR\times \RR^d)}.
\end{equation}
For every $\alpha\in(0, 1)$ and $s\in\RR$ we have
\[
(2\pi i s)^{\alpha}=|2\pi s|^{\alpha}e^{i\frac{\pi\alpha}{2}{\rm
    sgn}(s)},\qquad\textrm{or, equivalently,}\qquad|2\pi s|^{\alpha}=(2\pi i
s)^{\alpha}e^{-i\frac{\pi\alpha}{2}{\rm sgn}(s)}.
\]
Taking into account this identity and  the $L^p(\RR)$ boundedness of the projections
\[
\Pi_{\pm}(h)=\calF_{\RR}\big(\ind{(0,\infty)}(\pm\xi)\calF^{-1}_{\RR}h(\xi)\big)
\]
we note that
\[
\|D_t^{\alpha}h\|_{L^p(\RR)}=\big\|e^{i\frac{\pi\alpha}{2}}\Pi_{-}\calD_t^{\alpha}h+e^{-i\frac{\pi\alpha}{2}}\Pi_{+}\calD_t^{\alpha}h\big\|_{L^p(\RR)}
\lesssim\|\calD_t^{\alpha}h\|_{L^p(\RR)}.
\]
Thus, by \eqref{eq:5} and Fubini's theorem we obtain
 \begin{equation*}
 \big\|V_p\big(  F(t,\cdot) : t\in \RR\big)\big\|_{L^p}\lesssim \|F\|_{L^p(\RR\times \RR^d)}+\|\calD^{\alpha}_tF\|_{L^p(\RR\times \RR^d)}.
 \end{equation*}
 
 We claim that
 \begin{equation}
 \label{eq:103}
\|\calD^{\alpha}_tF\|_{L^p(\RR\times \RR^d)}\lesssim \|f\|_{L^p}+\|\eta\, \calD^{\alpha}_t M_tf\|_{L^p(\RR\times\RR^d)}.
\end{equation}
To prove \eqref{eq:103} we have to establish \eqref{eq:6}. Suppose that $h$ is a function in $\mathcal C^{2}(\RR)$ such that
\begin{align}
  \label{eq:7}
  \bigg|\bigg(\frac{{\rm d}}{{\rm d}t}\bigg)^jh(t)\bigg|\lesssim (1+|t|)^{-j-1},
\end{align}
for $j=0, 1, 2$, and
\begin{align}
  \label{eq:8}
    \sup_{s\in\RR}|(1+s^2)\calF_{\RR}^{-1}h(s)|\lesssim1.
\end{align}
Then by \cite[Lemma 6.6]{DGM1} when $\alpha\in(0, 1)$ we get
\begin{align*}
  \calD_t^{\alpha}h(t)&=-\frac1{\Gamma(1-\alpha)}\int_0^{\infty}u^{-\alpha}\frac{{\rm d}}{{\rm d}u} h(t+u){\rm d}u\\
 & =-\frac1{\Gamma(1-\alpha)}\int_0^{\infty}u^{-\alpha}\frac{{\rm d}}{{\rm d}u} \big(h(t+u)-h(t\big)){\rm d}u.
\end{align*}
So an integration by parts yields
\begin{align}
  \label{eq:6}
  \calD_t^{\alpha}h(t)=-\frac{\alpha}{\Gamma(1-\alpha)}\lim_{\varepsilon\to0}\int_{\varepsilon}^{\infty}u^{-\alpha-1} \big(h(t+u)-h(t)\big){\rm d}u.
\end{align}
We fix $x\in\RR^d$ and take alternatively $h(t)=\eta(t)M_tf(x)$, which
is a Schwartz function or $h(t)=M_tf(x)$ which is a function in
$\mathcal C^{\infty}(\RR)$, since we have assumed that
$f\in\mathcal S(\RR^d)$. To be able to apply formula \eqref{eq:6} with
these functions we have to only verify \eqref{eq:7} and \eqref{eq:8}. Indeed, for $h(t)=\eta(t)M_tf(x)$ or $h(t)=M_tf(x)$ we get
$$\bigg|\frac{{{\rm d}^{j}}}{{\rm d}t^j}h(t)\bigg|\le C(f,x,d)\, (1+|t|)^{-j-1}.$$
  We also have
$$\sup_{s\in\RR}|(1+s^2)\calF_{\RR}h(s)|\leq C(f,x,d).$$ 
Although the last two inequalities
have bounds which depend on the dimension, it does not affect our
result. These inequalities are only used to check that we are allowed
to apply \cite[Lemma 6.6]{DGM1} to establish \eqref{eq:6}, which
itself is independent of the dimension.

Using \eqref{eq:6} we see that
\begin{align}
  \label{eq:9}
  \calD^{\alpha}_tF(t, x)-\eta(t)\calD^{\alpha}_tM_tf(x)=E(t,x)=\int_0^{\infty}K(t, u)M_{t+u}f(x){\rm d}u,
\end{align}
with
\[
K(t, u)=-\frac{\alpha}{\Gamma(1-\alpha)}\frac{\eta(t+u)-\eta(t)}{u^{\alpha+1}}.
\]
Now for $K(t, u)$ we have the following two immediate estimates
\begin{align}
  \label{eq:14}
  |K(t, u)|\lesssim \min\big\{u^{-\alpha}, u^{-\alpha-1}\big\}.
\end{align}
Observe now that
\begin{align*}
\|E\|_{L^p(\RR\times\RR^d)}\le \|E\|_{L^p((-\infty, 3)\times\RR^d)}+\|E\|_{L^p((3, \infty)\times\RR^d)}+\|E\|_{L^p([-3,3]\times\RR^d)}.  
\end{align*}
We estimate each term separately. For the first one, if $t<-3$ then
$\eta(t)=0$ and $\eta(t+u)\not=0$ only when $1/2<t+u<3$. Thus by Minkowski's integral inequality we obtain
\begin{align*}
  \|E\|_{L^p((-\infty, 3)\times\RR^d)}&\le\bigg(\int_{-\infty}^{-3}\Big(\int_0^{\infty}|K(t, u)|\sup_{t>0}\|M_tf\|_{L^p}{\rm d}u\Big)^p{\rm d}t\bigg)^{1/p}\\
                                      &  \le\sup_{t>0}\|M_tf\|_{L^p}\bigg(\int_{-\infty}^{-3}\Big(\int_{1/2-t}^{3-t}u^{-\alpha-1}{\rm d}u\Big)^p{\rm d}t\bigg)^{1/p}\\
                                      &  \lesssim\|f\|_{L^p}\bigg(\int_{-\infty}^{-3}|t|^{-p(\alpha+1)}{\rm d}t\bigg)^{1/p}\lesssim\|f\|_{L^p}.
\end{align*}
For the second one we have $\|E\|_{L^p((3, \infty)\times\RR^d)}=0$, since $\eta(t+u)=\eta(t)=0$ for $t>3$.
Finally, by \eqref{eq:14} and Minkowski's integral inequality we have
\begin{align*}
  \|E\|_{L^p([-3,3]\times\RR^d)}\lesssim &\bigg(\int_{-3}^{3}\Big(\int_0^{\infty}\min\big\{u^{-\alpha}, u^{-\alpha-1}\big\}\sup_{t>0}\|M_tf\|_{L^p}{\rm d}u\Big)^p{\rm d}t\bigg)^{1/p}\\
  \lesssim &\sup_{t>0}\|M_tf\|_{L^p}\lesssim \|f\|_{L^p}.
\end{align*}
Therefore, we have proven that
\begin{align}
\label{eq:16}
  \|E\|_{L^p(\RR\times\RR^d)}\lesssim\|f\|_{L^p},
\end{align}
which in view of \eqref{eq:9} yields \eqref{eq:103}.

Now, by Fubini's theorem applied twice we have
\begin{align*}
  \calD^{\alpha}_tM_tf(x)&=-\frac1{\Gamma(1-\alpha)}\int_t^{\infty}(u-t)^{-\alpha}\frac{{\rm d}}{{\rm d}u}M_uf(x){\rm d}u\\
                         &=-\frac1{\Gamma(1-\alpha)}\int_t^{\infty}(u-t)^{-\alpha}\mathcal F^{-1}\bigg(\frac{{\rm d}}{{\rm d}u}m(u\xi)\mathcal Ff(\xi)\bigg)(x){\rm d}u\\
                         &=\mathcal F^{-1}\bigg(\bigg(-\frac1{\Gamma(1-\alpha)}\int_t^{\infty}(u-t)^{-\alpha}\frac{{\rm d}}{{\rm d}u}m(u\xi){\rm d}u\bigg) \mathcal Ff(\xi)\bigg)(x)\\
  &=\mathcal F^{-1}\big(\mathcal D_t^{\alpha}m(t\xi)\mathcal Ff(\xi)\big)(x).
\end{align*}
This combined with \eqref{eq:103} gives \eqref{eq:24}. Finally by formula \eqref{eq:40} we obtain for any $\xi\in \RR^d$ and $\lambda>0$ that
  \[
\mathcal D_s^{\alpha}m(s\lambda\xi)\big|_{s=t}=\lambda^{\alpha}\mathcal D_s^{\alpha}m(s\xi)\big|_{s=\lambda t},
\]
thus
 \[
\mathcal D_s^{\alpha}m(s\xi)\big|_{s=t}=t^{-\alpha}\mathcal D_s^{\alpha}m(s(t\xi))\big|_{s=1}=t^{-\alpha}\big((t\xi)\cdot\nabla\big)^{\alpha}m(t\xi).
\]
This identity combined with \eqref{eq:24} implies
\begin{align*}
\big\|\eta(t)\mathcal F^{-1}\big(\mathcal D_t^{\alpha}m(t\xi)\mathcal Ff(\xi)\big)\big\|_{L^p(\RR\times\RR^d)}&\le
  \sup_{t>0}\big\|\mathcal F^{-1}\big(\big((t\xi)\cdot\nabla\big)^{\alpha}m(t\xi)\mathcal Ff(\xi)\big)\big\|_{L^p},
\end{align*}
which proves \eqref{eq:4} and completes the proof of the lemma.
\end{proof}

\subsection{Proof of Theorem \ref{thm:6}}
Fix a non-empty convex symmetric body $G$ in $\RR^d$. In view of Theorem \ref{thm:3} we are left with 
proving \eqref{eq:91} for all $p\in(3/2, 4)$. For simplicity of the notation as in the previous
section we will write $M_t=M_t^G$ and $m=m^G$.
In the proof we will use  the maximal result
from \cite{B2} or \cite{Car1}, which states that for every
$p\in(3/2, \infty]$ there exists $C_{p, \infty}>0$ such that every
$d\in\NN$ and for every convex body $G\subset \RR^d$ the following
inequality holds
\begin{align}
  \label{eq:94}
  \big\|\sup_{t>0}|M_{t}f|\big\|_{L^p}\le C_{p, \infty}\|f\|_{L^p}
\end{align}
for all $f\in L^p(\RR^d)$. We will also appeal to the lacunary maximal inequality from \eqref{eq:82}.

The proof will be split according to whether $p\in(2,4)$ or
$p\in (3/2,2].$ In both cases we shall exploit the $L^2$
inequality
            \begin{align}
\label{eq:56}
\bigg\|\Big(\sum_{n\in\ZZ} \sum_{k = 0}^{2^{l}-1} \big|M_{2^n+{2^{n-l}(k+1)}}S_{j+n}f
  - M_{2^n+{2^{n-l}k}}S_{j+n}f\big|^2
  \Big)^{1/2}\bigg\|_{L^2}
  \lesssim 2^{-(1-\varepsilon)l/2}2^{-\varepsilon|j|/4}\|f\|_{L^2},
\end{align}
valid for all $\varepsilon\in(0, 1)$, $j\in \ZZ$ and $l\ge0$, with
the implicit constant which does not depend on $j$ and $l$.

\begin{proof}[Proof of the estimate \eqref{eq:56}] We will need some preparatory estimates. 
First of all we observe that for every $\varepsilon\in[0, 1)$ we have
 \begin{align}
   \label{eq:55}
   \begin{split}     
\sum_{k = 0}^{2^{l}-1}
\big|m\big((2^n+{2^{n-l}(k+1)})\xi\big)-m\big((2^n+{2^{n-l}k})\xi\big)\big|^{2-\varepsilon}\\
   \end{split}
 \end{align}
 \begin{align*}
  \lesssim \sum_{k = 0}^{2^{l}-1}\bigg(
 \int_{2^n+{2^{n-l}k}}^{2^n+{2^{n-l}(k+1)}}|\langle \xi, \nabla
 m(s\xi)\rangle|{\rm d}s \bigg)^{2-\varepsilon}
 \lesssim \sum_{k =
   0}^{2^{l}-1}\bigg(\log\bigg(\frac{2^n+2^{n-l}(k+1)}{2^n+2^{n-l}k}\bigg)\bigg)^{2-\varepsilon}
 \end{align*}
\begin{align*}
  \lesssim
\sum_{k =
	0}^{2^{l}-1}\bigg(\log\bigg(1+\frac{1}{2^l+k}\bigg)\bigg)^{2-\varepsilon}
\lesssim \frac{1}{2^{(1-\varepsilon)l}},
\end{align*}
where in the second inequality we have used \eqref{eq:69}.

Secondly, for every $0\le k\le 2^{l}$, we have
$(2^n+{2^{n-l}k})\simeq 2^n$, which guarantees
\begin{multline}
\label{eq:57}
  \big|m\big((2^n+{2^{n-l}(k+1)})\xi\big)-m\big((2^n+{2^{n-l}k})\xi\big)\big|^{\varepsilon}
|\mathcal
F S_{j+n}(\xi)|^2\\
\lesssim
\min\big\{|L2^n\xi|,|L2^n\xi|^{-1}\big\}^{\varepsilon}\big|e^{-2\pi
  L2^{j+n}|\xi|}-e^{-2\pi L2^{j+n-1}|\xi|}\big|^2\\
\lesssim 2^{-\varepsilon|j|/2}\min\big\{|L2^n\xi|,|L2^n\xi|^{-1}\big\}^{\varepsilon/2}.
\end{multline}
Combining \eqref{eq:55} with \eqref{eq:57} with $\varepsilon\in(0, 1)$ we immediately obtain 
\begin{multline}
  \label{eq:58}
\sum_{k = 0}^{2^{l}-1}
\big|m\big((2^n+{2^{n-l}(k+1)})\xi\big)-m\big((2^n+{2^{n-l}k})\xi\big)\big|^2
|\mathcal
F S_{j+n}(\xi)|^2\\
\lesssim 2^{-\varepsilon|j|/2}2^{-(1-\varepsilon)l}\min\big\{|L2^n\xi|,|L2^n\xi|^{-1}\big\}^{\varepsilon/2}.
\end{multline}
By the Plancherel theorem and \eqref{eq:58} we get
\begin{multline*}
\bigg\|\Big(\sum_{n\in\ZZ} \sum_{k = 0}^{2^{l}-1} \big|M_{2^n+{2^{n-l}(k+1)}}S_{j+n}f
  - M_{2^n+{2^{n-l}k}}S_{j+n}f\big|^2
  \Big)^{1/2}\bigg\|_{L^2}^2\\
  \lesssim2^{-\varepsilon|j|/2}2^{-(1-\varepsilon)l}
  \int_{\RR^d}\sum_{n\in\ZZ}\min\big\{|L2^n\xi|,|L2^n\xi|^{-1}\big\}^{\varepsilon}|\mathcal F
  f(\xi)|^2{\rm d}\xi\lesssim 2^{-\varepsilon|j|/2}2^{-(1-\varepsilon)l}\|f\|_{L^2}^2.
\end{multline*}
This completes the proof of \eqref{eq:56}.\end{proof}
           
We now pass to the case when $p>2$.            
\begin{proof}[Proof of inequality \eqref{eq:91} for $p\in (2,4)$ in the settings of Theorem \ref{thm:6}]

         Note that by formulas
           \eqref{eq:20} and \eqref{eq:80} we obtain
           \begin{multline}
           \label{eq:19}
           \bigg\|\Big(\sum_{n\in\ZZ}V_2\big(M_{t}f: t\in[2^n,
           2^{n+1})\big)^2\Big)^{1/2}\bigg\|_{L^p}\\
           \lesssim \sum_{j\in\ZZ}\sum_{l\ge0}
           \bigg\|\Big(\sum_{n\in\ZZ}
           \sum_{k = 0}^{2^{l}-1}
           \big|M_{2^n+{2^{n-l}(k+1)}}S_{j+n}f - M_{2^n+{2^{n-l}k}}S_{j+n}f\big|^2
           \Big)^{1/2}\bigg\|_{L^p}\lesssim \|f\|_{L^p}.
           \end{multline}
           To establish the last inequality in \eqref{eq:19} we
           have to show that for every $p\in[2, 4)$, there are
           $\delta_p, \varepsilon_p>0$ such that 
           \begin{align}
           \label{eq:93}
           \bigg\|\Big(\sum_{n\in\ZZ}
           \sum_{k = 0}^{2^{l}-1}
           \big|M_{2^n+{2^{n-l}(k+1)}}S_{j+n}f - M_{2^n+{2^{n-l}k}}S_{j+n}f\big|^2
           \Big)^{1/2}\bigg\|_{L^p}\lesssim 2^{-\delta_pl}2^{-\varepsilon_p|j|}\|f\|_{L^p}
           \end{align}
           holds for all $f\in L^p(\RR^d)$ uniformly in $j\in\ZZ$
           and $l\ge0$.
           
            To this end, we show that, for $p\in [2,\infty)$, we have
            \begin{align}
\label{eq:59}
\bigg\|\Big(\sum_{n\in\ZZ} \sum_{k = 0}^{2^{l}-1} \big|M_{2^n+{2^{n-l}(k+1)}}S_{j+n}f
  - M_{2^n+{2^{n-l}k}}S_{j+n}f\big|^2
  \Big)^{1/2}\bigg\|_{L^p}
  \lesssim 2^{l/2}\|f\|_{L^p}.  
            \end{align}
 Then interpolation of \eqref{eq:56} with \eqref{eq:59}  does the job and we
 obtain \eqref{eq:93} for all $p\in[2, 4).$           
            
           Thus we focus on proving \eqref{eq:59}. Since $p\ge 2$ we estimate
            \begin{multline*}
\bigg\|\Big(\sum_{n\in\ZZ} \sum_{k = 0}^{2^{l}-1} \big|M_{2^n+{2^{n-l}(k+1)}}S_{j+n}f
  - M_{2^n+{2^{n-l}k}}S_{j+n}f\big|^2
  \Big)^{1/2}\bigg\|_{L^p}^2\\
  \le 2^l\max_{0\le k\le 2^l-1}\bigg\|\sum_{n\in\ZZ}  \big|M_{2^n+{2^{n-l}(k+1)}}S_{j+n}f
  - M_{2^n+{2^{n-l}k}}S_{j+n}f\big|^2
  \bigg\|_{L^{p/2}}\\
  \lesssim 2^l\max_{0\le k\le 2^l}\bigg\|\Big(\sum_{n\in\ZZ}  \big|M_{2^n+{2^{n-l}k}}S_{j+n}f\big|^2
  \Big)^{1/2}\bigg\|_{L^{p}}^2\lesssim 2^l\|f\|_{L^{p}}^2,
            \end{multline*}
            where the last inequality follows from \eqref{eq:81} and 
            \begin{align}
              \label{eq:31}
              \sup_{l\ge0}\max_{0\le k\le 2^l}\bigg\|\Big(\sum_{n\in\ZZ}  \big|M_{2^n+{2^{n-l}k}}g_n\big|^2
  \Big)^{1/2}\bigg\|_{L^{p}}\lesssim\bigg\|\Big(\sum_{n\in\ZZ}  |g_n|^2
  \Big)^{1/2}\bigg\|_{L^{p}}
            \end{align}
            which holds for all $p\in(1, \infty)$ and the implicit
            constant independent of the dimension. To prove
            \eqref{eq:31} we follow the argument used to justify
            \eqref{eq:29}.  This is feasible, since for every
            $p\in(1, \infty]$ and for
            every $f\in L^p(\RR^d)$ we have the following lacunary  estimate
            \[
\sup_{l\ge0}\max_{0\le k\le 2^l}\big\|\sup_{n\in\ZZ}|M_{2^n+{2^{n-l}k}}f|\big\|_{L^p}\le C_{p,\infty}\|f\|_{L^p}
\]
with the same constant  $C_{p, \infty}$ as in \eqref{eq:82}.
The last inequality can be established by appealing to  \eqref{eq:82}  with a new convex body $(1+2^{-l}k)G$, since
\[
M_{2^n+{2^{n-l}k}}f=M_{2^n+{2^{n-l}k}}^Gf=M_{2^n}^{(1+2^{-l}k)G}f.
\]
Hence, \eqref{eq:59} follows and the proof of \eqref{eq:91} for $p\in(2,4)$ is completed.
\end{proof}

We now pass to the case when $p<2$.

\begin{proof}[Proof of inequality \eqref{eq:91} for $p\in (3/2,2)$ in the settings of Theorem \ref{thm:6}]
Here we apply the almost orthogonality principle from Proposition
\ref{prop:2} with $p_0=3/2$, $Z=(0, \infty)$ and with $X=\RR^d$ endowed with the $\sigma$-algebra
$\mathcal B$ of all Lebesgue measurable sets on $\RR^d$ and the
Lebesgue measure $\mu=|\cdot|$ on $\RR^d$. Moreover, $T_t=M_t,$
$S_n=P_{L2^n}-P_{L2^{n-1}}$ is the Poisson projection and
$a_j=2^{-\varepsilon|j|/4}$ for some
    $\varepsilon\in(0, 1)$. Note that by \eqref{eq:20} and
\eqref{eq:80} the inequality from \eqref{eq:56} implies
\begin{equation}
\label{eq:56'}
\bigg\|\Big(\sum_{n\in\ZZ}V_2\big(M_{t}S_{j+n}: t\in[2^n,
2^{n+1})\big)^2\Big)^{1/2}\bigg\|_{L^2}
\lesssim  2^{-\varepsilon|j|/4}\|f\|_{L^2}.
\end{equation}
The above inequality together with \eqref{eq:81} and \eqref{eq:94}
show that in order to apply Proposition \ref{prop:2} it remains to
verify the estimate
\begin{align}
\label{eq:38'}
\sup_{n\in\ZZ}\big\|V_p\big(  M_t f: t\in[2^n, 2^{n+1})\big)\big\|_{L^p}\lesssim\|f\|_{L^p},
\end{align}
for $p\in (3/2,2]$. First note that by rescaling it suffices to prove that
\[
\big\|V_p\big(  M_t f: t\in[1, 2)\big)\big\|_{L^p}\lesssim\|f\|_{L^p}.
\]
Lemma \ref{lem:7}  gives for  any $\alpha\in (1/p, 1)$ that
\begin{equation*}
  \begin{split}
  \big\|V_p\big(M_tf:t\in[1, 2)\big)\big\|_{L^p}&\le    \big\|V_p\big((\eta(t)M_tf):t\in\RR\big)\big\|_{L^p}\\
  &\lesssim\|f\|_{L^p}+\sup_{t>0}\big\|T_{((t\xi)\cdot\nabla)^{\alpha}m(t\xi)}f\big\|_{L^p},
  \end{split}
\end{equation*}
where $\eta$ is as in \eqref{eq:3}. Due to  \eqref{eq:54} we obtain
\begin{align*}
\sup_{t>0}\big\|T_{((t\xi)\cdot\nabla)^{\alpha}m(t\xi)}f\big\|_{L^p}\le  \sup_{t>0}\big\|T_{(\xi\cdot\nabla)^{\alpha}m}\big\|_{L^p\to L^p}\|f\|_{L^p}\lesssim \|f\|_{L^p}
\end{align*}
for $\alpha=2-2/p$ which combined with $\alpha>1/p$ gives $p>3/2$ and proves \eqref{eq:38'}.
Hence we are allowed to apply Proposition \ref{prop:2} and the inequality in \eqref{eq:91} is proven. 
\end{proof}
\subsection{Proof of Theorem \ref{thm:7}}
 Now $G$ is a ball induced by a small $\ell^q$ norm given in \eqref{eq:66}. Note that in this case we have
\begin{align}
  \label{eq:97}
  \big\|\sup_{t>0}|M_{t}f|\big\|_{L^p}\le C_{p, q, \infty}\|f\|_{L^p}
\end{align}
for all $f\in L^p(\RR^d)$ with the constant $C_{p, q, \infty}>0$
independent of the dimension. When $q\in [1,\infty)$ the bound \eqref{eq:97} is due to M\"uller \cite{Mul1}, while for $q=\infty$ it was proved by the first author \cite{B3}. 

Our aim will be
to show \eqref{eq:91} for all $p\in(1, \infty)$. This is enough by Theorem \ref{thm:3}. As before for simplicity
of notation we will write $M_t=M_t^G$ and
$m=m^G$.

The representation \eqref{eq:99} and the bound \eqref{eq:98} give a
useful estimate for the $L^p$ norm of the difference of $M_{t+h}f$ and
$M_t f$ for the convex bodies considered in Theorem \ref{thm:7}.
\begin{lemma}
	\label{lem:3}
	Fix $p\in(1, \infty)$ and $\alpha\in(1/2,
        1).$ Then, there exists a constant $C_{p,\alpha}>0$ such
        that for every $t,h>0,$ and for every $f\in L^p(\RR^d)$ we
        have
	\begin{align}
	\label{eq:61}
	\|M_{t+h}f - M_{t}f\|_{L^p}\lesssim C_{p,\alpha}\bigg(\frac{h}{t}\bigg)^{\alpha}
	\|f\|_{L^p}.
	\end{align}
	The same estimate remains true when the operator $M_t$ is replaced
	with its adjoint $M_t^*$.
\end{lemma}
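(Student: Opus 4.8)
\emph{(Proof plan for Lemma \ref{lem:3}.)}

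First I would dispose of the easy cases. Each $M_t=M_t^G$ is an average, hence a contraction on every $L^p(\RR^d)$, $1\le p\le\infty$, so $\|M_{t+h}f-M_tf\|_{L^p}\le 2\|f\|_{L^p}$, which already yields \eqref{eq:61} whenever $h/t\ge 1/2$. Thus it suffices to treat $0<h\le t/2$. Moreover, since $G=B_q$ is symmetric the kernel $|G_t|^{-1}\ind{G_t}$ is even, so $M_t$ is self-adjoint on $L^2(\RR^d)$ and $M_t^*=M_t$ as an operator on each $L^p$; hence the assertion for $M_t^*$ is literally the same as for $M_t$ and requires no separate argument.

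The core of the proof is the subordination identity \eqref{eq:99}, used with the exponent $\alpha\in(1/2,1)$ from the statement, together with the dimension-free bound $\sup_{u>0}\|\mathcal P_u^{\alpha}\|_{L^p\to L^p}\lesssim_{\alpha,p}1$ from \eqref{eq:98}. Writing both $M_{t+h}f$ and $M_tf$ via \eqref{eq:99} and splitting the $u$-integral for $M_tf$ at $u=t+h$, the operators $\mathcal P_u^{\alpha}f$ enter with the \emph{same} argument $u$ in both, so the difference collapses to
\begin{align*}
M_{t+h}f - M_tf
=\frac{1}{\Gamma(\alpha)}\int_{t+h}^{\infty}\Big[\tfrac{t+h}{u}\big(1-\tfrac{t+h}{u}\big)^{\alpha-1}-\tfrac{t}{u}\big(1-\tfrac{t}{u}\big)^{\alpha-1}\Big]\mathcal P_u^{\alpha}f\,\frac{{\rm d}u}{u}
-\frac{1}{\Gamma(\alpha)}\int_{t}^{t+h}\tfrac{t}{u}\big(1-\tfrac{t}{u}\big)^{\alpha-1}\mathcal P_u^{\alpha}f\,\frac{{\rm d}u}{u}.
\end{align*}
Minkowski's integral inequality and \eqref{eq:98} then reduce \eqref{eq:61} to the two scalar kernel estimates
\begin{align*}
\int_{t}^{t+h}\frac{t}{u}\Big(1-\frac{t}{u}\Big)^{\alpha-1}\frac{{\rm d}u}{u}\lesssim_{\alpha}\Big(\frac{h}{t}\Big)^{\alpha},
\qquad
\int_{t+h}^{\infty}\Big|\tfrac{t+h}{u}\big(1-\tfrac{t+h}{u}\big)^{\alpha-1}-\tfrac{t}{u}\big(1-\tfrac{t}{u}\big)^{\alpha-1}\Big|\frac{{\rm d}u}{u}\lesssim_{\alpha}\Big(\frac{h}{t}\Big)^{\alpha},
\end{align*}
both scale-invariant, so I may take $t=1$ and write $\delta=h\in(0,1/2]$.

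The first estimate is exact: the substitution $s=1/u$ turns its left side into $\int_{1/(1+\delta)}^{1}(1-s)^{\alpha-1}\,{\rm d}s=\alpha^{-1}\big(\delta/(1+\delta)\big)^{\alpha}\le\alpha^{-1}\delta^{\alpha}$. For the second, with $g_a(u)=a\,u^{-1-\alpha}(u-a)^{\alpha-1}$ I split the integral at $u=1+2\delta$. On $[1+\delta,1+2\delta]$ I use $|g_{1+\delta}-g_1|\le g_{1+\delta}+g_1$, $u^{-1-\alpha}\le 1$, and the elementary integrals $\int_{1+\delta}^{1+2\delta}(u-1-\delta)^{\alpha-1}{\rm d}u=\alpha^{-1}\delta^{\alpha}$, $\int_{1+\delta}^{1+2\delta}(u-1)^{\alpha-1}{\rm d}u\le\alpha^{-1}(2\delta)^{\alpha}$, giving $\lesssim_{\alpha}\delta^{\alpha}$. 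On $[1+2\delta,\infty)$ I apply the mean value theorem in the parameter $a$: since $\partial_a g_a(u)=u^{-1-\alpha}(u-a)^{\alpha-2}(u-\alpha a)$ with $0<u-\alpha a<u$ and $u-a\ge u-1-\delta$ for $a\in[1,1+\delta]$, one gets $|g_{1+\delta}(u)-g_1(u)|\le\delta\,u^{-\alpha}(u-1-\delta)^{\alpha-2}$, and splitting $\int_{1+2\delta}^{\infty}u^{-\alpha}(u-1-\delta)^{\alpha-2}{\rm d}u$ at $u=2$ (using $u^{-\alpha}\le 1$ near $1$ and $u-1-\delta\gtrsim u$ for large $u$) bounds it by $\lesssim_{\alpha}\delta^{\alpha-1}$, hence this piece is also $\lesssim_{\alpha}\delta^{\alpha}$.

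I expect the only genuine work to be this last elementary estimate, namely organizing the singular weight $(1-a/u)^{\alpha-1}$ near $u=a$ so that exactly the power $\delta^{\alpha}=(h/t)^{\alpha}$ comes out; the rest is a direct application of \eqref{eq:99}, \eqref{eq:98}, Minkowski's inequality, and scaling. All implied constants depend only on $\alpha$ and $p$ (the latter only through \eqref{eq:98}), hence not on the dimension $d$ nor on $q$, since \eqref{eq:98} is dimension-free for the bodies $B_q$.
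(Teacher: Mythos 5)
Your proof is correct and follows essentially the same route as the paper: both use the subordination identity \eqref{eq:99} with the operators $\mathcal P_u^{\alpha}$, the dimension-free bound \eqref{eq:98}, Minkowski's integral inequality, rescaling to $t=1$, and then an elementary estimate of the difference of kernels $A(1+h,u)-A(1,u)$ split over the same three regimes $[1,1+h]$, $[1+h,1+2h]$, $[1+2h,\infty)$. The only differences are cosmetic: you spell out the trivial reduction to $h\le t/2$ and the observation that $M_t^*=M_t$ since the kernel is even, and you organize the kernel estimate via the mean value theorem in the parameter $a$, whereas the paper simply records the resulting pointwise bounds on $X(u,h)$; the substance is identical.
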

\begin{proof}
It suffices to consider the case $h<t.$ After rescaling we may assume
that $t=1$ and $0<h<1.$ Now, by \eqref{eq:99} we have,
	$$ M_tf(x)=\int_0^{\infty}A(t,u)\mathcal P_u^{\alpha}f(x){\rm d}u,$$
	where we have set
        \begin{align*}
A(t,u)=
          \begin{cases}
          \frac1{\Gamma(\alpha)}\frac tu(1-\frac tu)^{\alpha-1} \frac1u,& \text{ if } u\ge t,\\
          \ \ \ \ \ \ \ \ \ \ 0, & \text{ if } u\le t.
          \end{cases}
        \end{align*}
                          Denote
                          $$X(u,h)=|A(1+h,u)-A(1,u)|.$$
Then,
	\begin{equation*}
	|M_{1+h}f(x)-M_1f(x)|\lesssim \int_1^{\infty}X(u, h)|\mathcal P_u^{\alpha}f(x)|{\rm d}u.
	\end{equation*}
	In view of Minkowski's integral inequality and \eqref{eq:98} we obtain
        \begin{align}
          \label{eq:110}
        \|M_{1+h}f-M_1f\|_{L^p}\leq  \sup_{u>1} \|\mathcal P_u^{\alpha}f\|_{L^p}\int_1^{\infty}X(u, h){\rm d}u\le \|f\|_{L^p}\int_1^{\infty}X(u, h){\rm d}u.  
        \end{align}
   Then simple calculations show that
        \begin{align*}
          X(u,h)\lesssim
          \begin{cases}
          |u-1|^{\alpha-1}, & \text{ if } 1\le u\le 1+h,\\
                    |u-h-1|^{\alpha-1}, & \text{ if } 1+h\le u\le 1+2h,\\
         |u-1|^{\alpha-2}h, & \text{ if } 1+2h\le u.
          \end{cases}
        \end{align*}
	Clearly
	$$\int_1^{\infty}X(u, h){\rm d}u\lesssim\int_1^{1+h}|u-1|^{\alpha-1}{\rm d}u+\int_{1+h}^{1+2h}|u-h-1|^{\alpha-1}{\rm d}u+
        h\int_{1+2h}^{\infty}|u-1|^{\alpha-2}{\rm d}u\lesssim h^{\alpha}.$$
	Hence, using \eqref{eq:110} we finish the proof of the lemma.
\end{proof}

The proof of \eqref{eq:91} is divided according to whether $p\in(2,\infty)$ or $p\in (1,2].$ We consider first the case when $p>2$.

\begin{proof}[Proof of \eqref{eq:91} for $p\in (2,\infty)$ in the settings of Theorem \ref{thm:7} ]
Here we would like to show that for every $\alpha\in(1/2,1)$ and
$p\in(2, \infty)$ there is
a constant $C_{\alpha, p}>0$ such that
\begin{align}
\label{eq:39}
\bigg\|\Big(\sum_{n\in\ZZ} \sum_{k = 0}^{2^{l}-1} \big|M_{2^n+{2^{n-l}(k+1)}}S_{j+n}f
- M_{2^n+{2^{n-l}k}}S_{j+n}f\big|^2
\Big)^{1/2}\bigg\|_{L^p}
\le C_{\alpha, p} 2^{\frac{l}2-\frac{p'}{2}\alpha l}\|f\|_{L^p}.    
\end{align}
Then, taking $\alpha$ sufficiently close to $1$ and interpolating with
\eqref{eq:56} we complete the proof. Observe
that
\begin{multline}
\label{eq:132}
\bigg\|\Big(\sum_{n\in\ZZ} \sum_{k = 0}^{2^{l}-1} \big|M_{2^n+{2^{n-l}(k+1)}}S_{j+n}f
- M_{2^n+{2^{n-l}k}}S_{j+n}f\big|^2
\Big)^{1/2}\bigg\|_{L^p}\\
\le
2^{l/2}\max_{0\le k\le 2^{l}-1}\bigg\|\Big(\sum_{n\in\ZZ}  \big|M_{2^n+{2^{n-l}(k+1)}}S_{j+n}f
- M_{2^n+{2^{n-l}k}}S_{j+n}f\big|^2\Big)^{1/2}
\bigg\|_{L^{p}}
\end{multline}
since $p>2$. Take any sequence $(g_n: n\in\ZZ)\in L^{p'}(\ell^2)$ such
that
\[
\bigg\|\Big(\sum_{n\in\ZZ} |g_n|^2
\Big)^{1/2}\bigg\|_{L^{p'}}\le 1.
\]
Then by the duality it will suffice to prove, for every $p'\in(1,2)$, that
\begin{align}
\label{eq:133}
\max_{0\le k\le 2^{l}-1}\bigg\|\Big(\sum_{n\in\ZZ}  \big|M_{2^n+{2^{n-l}(k+1)}}^*g_n
- M_{2^n+{2^{n-l}k}}^*g_n\big|^2\Big)^{1/2}
\bigg\|_{L^{p'}}\lesssim 2^{-\frac{p'}{2} \alpha l}     \bigg\|\Big(\sum_{n\in\ZZ} |g_n|^2
\Big)^{1/2}\bigg\|_{L^{p'}}.
\end{align}

Lemma \ref{lem:3} will be critical in the proof of  \eqref{eq:133}.
We shall prove that for every $q\in(1, 2)$ we have
\begin{align}
\label{eq:138}
\bigg\|\Big(\sum_{n\in\ZZ}  \big|M_{2^n+{2^{n-l}(k+1)}}^*g_n
- M_{2^n+{2^{n-l}k}}^*g_n\big|^{2}\Big)^{1/{2}}
\bigg\|_{L^{q}}\lesssim 2^{-\frac{q}{2}\alpha l}     \bigg\|\Big(\sum_{n\in\ZZ} |g_n|^{2}
\Big)^{1/{2}}\bigg\|_{L^{q}}.
\end{align}
Taking $q=p'$ in \eqref{eq:138} we obtain \eqref{eq:133}. 
To prove the estimate \eqref{eq:138} we will use a vector-valued
interpolation between $L^q(\ell^q)$ and $L^q(\ell^{\infty})$ for
$q\in(1, 2)$. Take $\theta\in (0, 1)$ satisfying 
\[
\frac{1}{2}=\frac{\theta}{q}+\frac{1-\theta}{\infty};
\]
then $\theta=q/2$. For $L^q(\ell^q)$ we have
\begin{align}
\label{eq:136}
\bigg\|\Big(\sum_{n\in\ZZ}  \big|M_{2^n+{2^{n-l}(k+1)}}^*g_n
- M_{2^n+{2^{n-l}k}}^*g_n\big|^{q}\Big)^{1/{q}}
\bigg\|_{L^{q}}\lesssim 2^{-\alpha l}     \bigg\|\Big(\sum_{n\in\ZZ} |g_n|^{q}
\Big)^{1/{q}}\bigg\|_{L^{q}}    
\end{align}
since, by \eqref{eq:61} it holds
\[
\big\|M_{2^n+{2^{n-l}(k+1)}}^*f
- M_{2^n+{2^{n-l}k}}^*f
\big\|_{L^{q}}\lesssim  2^{-\alpha l}\|f\|_{L^{q}}.
\]
The $L^q(\ell^{\infty})$ endpoint is estimated
using \eqref{eq:97} as
\begin{align}
\label{eq:137}
\begin{split}
\big\|\sup_{n\in\ZZ} \big| M_{2^n+{2^{n-l}(k+1)}}^*g_n
&  - M_{2^n+{2^{n-l}k}}^*g_n\big|
\big\|_{L^{q}}\\
&\lesssim \big\|\sup_{n\in\ZZ}  M_{2^n+{2^{n-l}(k+1)}}^*g\big\|_{L^{q}}
+\big\| \sup_{n\in\ZZ}M_{2^n+{2^{n-l}k}}^*g
\big\|_{L^{q}}      \lesssim \|g\|_{L^q}             
\end{split}
\end{align}
where $g(x)=\sup_{n\in\ZZ}|g_n(x)|$. Now, invoking 
interpolation we obtain \eqref{eq:138}.
\end{proof}

Now we pass to the case when $p<2$.

\begin{proof}[Proof of \eqref{eq:91} for {$p\in(1, 2]$} in the settings of Theorem \ref{thm:7} ]	
Now our aim will be to prove that for every $p\in(1, 2)$ there is a constant $C_p>0$
  independent of $d$ such that
  \begin{align}
    \label{eq:26}
     \bigg\|\Big(\sum_{n\in\ZZ}V_2\big(M_{t}f: t\in[2^n,
   2^{n+1})\big)^2\Big)^{1/2}\bigg\|_{L^p}\le C_p \|f\|_{L^p}
  \end{align}
  for all $f\in L^p(\RR^d)$. For this purpose we will again apply
 our almost orthogonality principle for $r$-variations  
  Proposition \ref{prop:2} with $p_0=1$,
  $Z=(0, \infty),$ and with $X=\RR^d$ endowed with the $\sigma$-algebra
  $\mathcal B$ of all Lebesgue measurable sets on $\RR^d$ and the
  Lebesgue measure $\mu=|\cdot|$ on $\RR^d$. Moreover, we take $T_t=M_t,$
  $S_n=P_{L2^n}-P_{L2^{n-1}},$ and
  $a_j=2^{-\varepsilon|j|/4}$ for some $\varepsilon\in(0, 1)$.
  Condition \eqref{eq:92} was already justified in \eqref{eq:56'}. It
  only remains to verify condition \eqref{eq:33}, which in our case
  says that for every $p\in(1, 2]$  there is a
  constant $C_{p}>0$ independent of $d$ such that
   \begin{align}
     \label{eq:37}
         \sup_{n\in\ZZ}\big\|V_p\big(  M_t f: t\in[2^n,
     2^{n+1})\big)\big\|_{L^p}\le C_{p}\|f\|_{L^p}
   \end{align}
   holds for all $f\in L^p(\RR^d)$. Once \eqref{eq:37} is proven then
   Proposition \ref{prop:2} applies and completes the proof of Theorem
   \ref{thm:7}. By rescaling it suffices to prove that
\[
\big\|V_p\big(  M_t f: t\in[1, 2)\big)\big\|_{L^p}\lesssim\|f\|_{L^p}.
\]
By Lemma \ref{lem:7}  we obtain for  any $\alpha\in (1/p, 1)$ that
\begin{equation*}
  \begin{split}
  \big\|V_p\big(M_tf:t\in[1, 2)\big)\big\|_{L^p}&\le    \big\|V_p\big((\eta(t)M_tf):t\in\RR\big)\big\|_{L^p}\\
  &\lesssim\|f\|_{L^p}+\sup_{t>0}\big\|T_{((t\xi)\cdot\nabla)^{\alpha}m(t\xi)}f\big\|_{L^p},
  \end{split}
\end{equation*}
where $\eta$ is as in \eqref{eq:3}. Due to   \eqref{eq:98'} for $\alpha\in(1/2, 1)$ we obtain
\begin{align*}
  \sup_{t>0}\big\|T_{((t\xi)\cdot\nabla)^{\alpha}m(t\xi)}f\big\|_{L^p}\le  \sup_{t>0}\big\|T_{(\xi\cdot\nabla)^{\alpha}m}\big\|_{L^p\to L^p}\|f\|_{L^p}\lesssim \|f\|_{L^p}.
\end{align*}
If $p\in(1, 2)$ is close to $1$ we can always take an $\alpha$ such that
$\alpha>1/p$ and the proof of \eqref{eq:37} is completed.
\end{proof}

\section{Transference principle}
\label{sec:tp}
In this section we prove the transference principle, which will allow
us to deduce estimates for $r$-variations on $L^p(X, \mu)$ for the
operator $A_t^G$ from the corresponding bounds for $M_t^G$ on
$L^p(\RR^d)$. Specifically, in view of Proposition \ref{prop:3}, the
inequalities from \eqref{eq:43}, \eqref{eq:60} and \eqref{eq:63} will
follow respectively from \eqref{eq:65}, \eqref{eq:64} and \eqref{eq:67}.

\begin{proposition}
\label{prop:3}
Suppose that for some $p\in(1, \infty)$ and $r\in(2, \infty]$ there is a constant $C_{p, r}>0$ such that for every $d\in \NN$ and for every symmetric convex body $G\subset\RR^d$  the following estimate
\begin{align}
  \label{eq:101}
  \big\|V_r\big(  M_t^G h: t\in Z)\big)\big\|_{L^p(\RR^d)}\le C_{p, r}\|h\|_{L^p(\RR^d)}
\end{align}
holds for all $h\in L^p(\RR^d)$, where $Z\subseteq (0, \infty)$.
Let $A_t^G$ be the ergodic counterpart of $M_t^G$ defined in
\eqref{eq:22} for a given $\sigma$-finite measure space
$(X, \mathcal B, \mu)$ with families of commuting and
measure-preserving transformations $(T_1^t: t\in\RR), \ldots, (T_d^t: t\in\RR)$, which map $X$ to itself.

Then for every  $f\in L^p(X, \mu)$ the inequality
\begin{align}
  \label{eq:105}
  \big\|V_r\big(  A_t^G f: t\in Z)\big)\big\|_{L^p(X, \mu)}\le C_{p,r}\,\|f\|_{L^p(X, \mu)}
\end{align}
holds  with the parameters $p$, $r,$ and the constant $C_{p,r}$ as in \eqref{eq:101}.

\end{proposition}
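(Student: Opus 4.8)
The plan is to run the classical Calderón transference argument, treating the $r$-variation seminorm the way one treats a sublinear operator and being careful with the boundary correction so that the dimension-free constant in \eqref{eq:101} survives. Write $T^y=T_1^{y_1}\circ\cdots\circ T_d^{y_d}$ for $y=(y_1,\dots,y_d)\in\RR^d$; since the families $(T_i^t)$ commute and preserve $\mu$ we have $T^{y+z}=T^y\circ T^z$ and each $T^y$ is $\mu$-preserving. First I would make two routine reductions. By a density argument it suffices to prove \eqref{eq:105} for $f$ bounded with support of finite measure; for such $f$ the map $t\mapsto A_t^Gf(x)$ is continuous for $\mu$-a.e.\ $x$ (because $t\mapsto|G_t|^{-1}\ind{G_t}$ is continuous in $L^1(\RR^d)$ on compact subintervals of $(0,\infty)$), so $V_r(A_t^Gf(x):t\in Z)$ is a measurable function of $x$ and equals $V_r$ over any countable dense subset of $Z$, hence the supremum of $V_r(A_t^Gf(x):t\in Z_0)$ over finite subsets $Z_0\subseteq Z$. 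By monotone convergence it is then enough to bound $\|V_r(A_t^Gf:t\in Z_0)\|_{L^p(X,\mu)}$ for an arbitrary finite $Z_0\subseteq Z$, uniformly in $Z_0$.

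Fix such a $Z_0$, put $R=\max Z_0<\infty$, and choose $\rho=\rho(G)>0$ with $G\subseteq[-\rho,\rho]^d$, so that $G_t=tG\subseteq[-t\rho,t\rho]^d$. For $N>R\rho$ and $x\in X$ define the lifted function $\Phi_x^N(y)=f(T^yx)\ind{[-N,N]^d}(y)$, which lies in $L^p(\RR^d)$ for $\mu$-a.e.\ $x$ (by Fubini and measure preservation, $\int_X\|\Phi_x^N\|_{L^p(\RR^d)}^p\,{\rm d}\mu=(2N)^d\|f\|_{L^p(X,\mu)}^p$). The heart of the proof is the pointwise transference identity: using the symmetry $G_t=-G_t$ and $T^{y-z}x=T^{-z}(T^yx)$, for every $t\in Z_0$ and every $y$ in the shrunken cube $[-(N-R\rho),\,N-R\rho]^d$ the set $y+G_t$ stays inside $[-N,N]^d$, whence
\[
M_t^G\Phi_x^N(y)=\frac{1}{|G_t|}\int_{G_t}\Phi_x^N(y-z)\,{\rm d}z=\frac{1}{|G_t|}\int_{G_t}f\big(T^z(T^yx)\big)\,{\rm d}z=A_t^Gf(T^yx).
\]
Consequently, for all such $y$ one has $V_r\big(A_t^Gf(T^yx):t\in Z_0\big)\le V_r\big(M_t^G\Phi_x^N(y):t>0\big)$.

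Next I would raise this to the power $p$, integrate in $y$ over the shrunken cube, enlarge the $y$-domain to all of $\RR^d$, and apply \eqref{eq:101} with $h=\Phi_x^N$ (with the same body $G$ and the same dimension $d$). Integrating the resulting inequality in $x$ over $(X,\mu)$ and using Fubini together with the measure-invariance of each $x\mapsto T^yx$, the left side becomes $|[-(N-R\rho),N-R\rho]^d|\cdot\|V_r(A_t^Gf:t\in Z_0)\|_{L^p(X,\mu)}^p$ and the right side becomes $C_{p,r}^p\,|[-N,N]^d|\cdot\|f\|_{L^p(X,\mu)}^p$, so that
\[
\big\|V_r\big(A_t^Gf:t\in Z_0\big)\big\|_{L^p(X,\mu)}\le C_{p,r}\Big(\frac{N}{N-R\rho}\Big)^{d/p}\|f\|_{L^p(X,\mu)}.
\]
Letting $N\to\infty$ with $d$, $G$, $Z_0$ fixed kills the dilation factor and yields the bound for $Z_0$ with constant $C_{p,r}$; taking the supremum over finite $Z_0\subseteq Z$ and undoing the density reduction gives \eqref{eq:105}. (If one prefers not to invoke symmetry of $G$, the same works with $\Phi_x^N(y)=f(T^{-y}x)\ind{[-N,N]^d}(y)$.)

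The argument is essentially routine; the two places that need a little care are (i) the measurability/continuity bookkeeping for the $r$-variation seminorm, dispatched by the density reduction exactly as in the discussion preceding \eqref{eq:2}, and (ii) the boundary correction — one must integrate in $y$ over the shrunken cube, not over $[-N,N]^d$. Point (ii) is the conceptual content of transference, and it is precisely what keeps \eqref{eq:105} dimension-free: the radius $\rho(G)$ (which may depend both on $d$ and on the eccentricity of $G$) enters only through the factor $(N/(N-R\rho))^{d/p}$, which is sent to $1$ for each fixed body, so it never appears in the final constant.
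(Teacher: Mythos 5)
Your proof is correct and is essentially the same Calder\'on transference argument as the paper: lift $f$ to a compactly supported function on $\RR^d$, observe that on a slightly shrunken domain $M_t^G$ of the lift reproduces $A_t^G f$ along orbits, apply the Euclidean bound \eqref{eq:101}, and then send the size of the cutoff to infinity so the boundary factor disappears. The only cosmetic difference is in the bookkeeping: you truncate to a cube $[-N,N]^d$ and observe $(N/(N-R\rho))^{d/p}\to 1$ as $N\to\infty$ for each fixed $d$, whereas the paper truncates to the dilated body $G_{R(1+\varepsilon/d)}$, uses $(1+\varepsilon/d)^d\le e^\varepsilon$ to get a dilation factor that is bounded uniformly in $d$ from the outset, then lets $R\to\infty$ (to capture all of $Z$) and finally $\varepsilon\to 0$; both devices yield the constant $C_{p,r}$ unchanged.
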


\begin{proof}
We fix $f\in L^p(X, \mu)$, $\varepsilon>0,$ and $R>0,$ and define for every $x\in X$ the auxiliary function
\[
\phi_x(y)=
\begin{cases}
f\big(T_1^{y_1}\circ\ldots\circ T_d^{y_d}x\big), & \text{ if } y\in G_{R(1+\varepsilon/d)},\\
0, & \text{ otherwise}.
\end{cases}
\]
Then, for every $z\in G_R$ and $t<R\varepsilon/d,$ we have
\begin{align}
  \label{eq:106}
  \begin{split}
  A_t^Gf\big(T_1^{z_1}\circ\ldots\circ T_d^{z_d}x\big)&=
                                                                       \frac{1}{|G_t|}\int_{G_t}f\big(T_1^{z_1-y_1}\circ\ldots\circ T_d^{z_d-y_d    }x\big){\rm d}y_1\ldots{\rm d}y_d\\
  &=\frac{1}{|G_t|}\int_{G_t}\phi_x(z-y){\rm d}y=M_t^{G}\phi_x(z).
  \end{split}
\end{align}
Indeed, since $G$ is a symmetric convex body we have that  $z-y\in G_{R(1+\varepsilon/d)}$, whenever $z\in G_R$ and $y\in G_{t}.$ Hence, by \eqref{eq:106} and \eqref{eq:101} we get
\begin{align}
  \label{eq:108}
  \begin{split}
  \int_{G_R}\big|V_r\big(A_t^Gf\big(T_1^{z_1}\circ&\ldots\circ T_d^{z_d}x\big): t\in Z\cap(0, R\varepsilon/d)\big)\big|^p{\rm d}z_1\ldots{\rm d}z_d\\
  &  \le   \int_{G_R}\big|V_r\big(M_t^G\phi_x(z): t\in
  Z\cap(0, R\varepsilon/d)\big)\big|^p{\rm d}z_1\ldots{\rm d}z_d\\
  &\le \big\|V_r\big(M_t^G\phi_x: t\in Z\big)\big\|_{L^p(\RR^d)}^p\\
  &\le C_{p, r}^p\|\phi_x\|_{L^p(\RR^d)}^p.
  \end{split}
\end{align}
Averaging \eqref{eq:108} over $x\in X$ we obtain
\begin{align}
  \label{eq:109}
  \begin{split}
  \int_{G_R}\big\|V_r\big(A_t^Gf\big(T_1^{z_1}\circ&\ldots\circ T_d^{z_d}x\big): t\in Z\cap(0, R\varepsilon/d)\big)\big\|_{L^p(X, \mu)}^p{\rm d}z_1\ldots{\rm d}z_d\\
  &\le  C_{p, r}^p\int_{G_{R(1+\varepsilon/d)}}\big\|f\big(T_1^{z_1}\circ\ldots\circ T_d^{z_d}x\big)\big\|_{L^p(X, \mu)}^p{\rm d}z_1\ldots{\rm d}z_d,
  \end{split}
\end{align}
by definition of $\phi_x$, which is supported on $G_{R(1+\varepsilon/d)}$. Inequality \eqref{eq:109} guarantees that
\begin{align*}
  |G_R|\cdot \big\|V_r\big(A_t^Gf: t\in Z\cap(0, R\varepsilon/d)\big)\big\|_{L^p(X, \mu)}^p
  \le C_{p, r}^p\cdot|G_{R(1+\varepsilon/d)}|\cdot \|f\|_{L^p(X, \mu)}^p,
\end{align*}
since all $T_1^{z_1},\ldots,T_d^{z_d}$ preserve the measure $\mu$ on $X$. Dividing both sides by $|G_{R}|$ we obtain
that
\begin{equation*}
\begin{split}
  \big\|V_r\big(A_t^Gf: t\in Z\cap(0, R\varepsilon/d)\big)\big\|_{L^p(X, \mu)}^p
  \le  C_{p, r}^p(1+\varepsilon/d)^d\|f\|_{L^p(X, \mu)}^p\le C_{p, r}^p\,e^{\varepsilon}\|f\|_{L^p(X, \mu)}^p.
  \end{split}
\end{equation*}
This is the place where the parameter $\varepsilon>0$ is helpful.
Namely, taking $R\to\infty$ and invoking the monotone convergence theorem we conclude that
$$\big\|V_r\big(  A_t^G f: t\in Z)\big)\big\|_{L^p(X, \mu)}\le C_{p,r}\, e^{\varepsilon/p}\|f\|_{L^p(X, \mu)},$$
with arbitrary $\varepsilon>0.$
Therefore, letting $\varepsilon\to 0^+$ we obtain \eqref{eq:105} and complete the proof of the proposition.
\end{proof}

\appendix
\section{Dimension-free bounds for the ball averages}
We give a different proof of the dimension-free estimate for the
averages over the Euclidean balls in the full range of
$p\in(1, \infty)$ and $r\in(2, \infty).$ The proof is in spirit of the
original proof for the maximal function from \cite{SteinMax}. Here by
$M_t$ we always mean $M_t^G$ with $G$ being the Eucledean ball in
$\RR^d.$ The main result of this section reads as follows.
\begin{theorem}
\label{thm:ball} For $p\in(1, \infty)$ and $r\in(2, \infty)$ there is $C_{p, r}>0$ independent of $d$ such that
the inequality
	\[
	\big\lVert
	V_r\big(  M_t f: t>0\big)
	\big\rVert_{L^p}\le
	C_{r,p}\|f\|_{L^p},
	\]
        holds for all $f\in L^p(\RR^d)$.
\end{theorem}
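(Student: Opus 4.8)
The plan is to follow the same skeleton as the proof of Theorem \ref{thm:7}, the role of M\"uller's dimension-free multiplier estimate \eqref{eq:98'} being played instead by a direct analysis of the explicit Fourier multiplier $m^{B_2}$ of the Euclidean ball, in the spirit of \cite{SteinMax}. By \eqref{eq:21} and Theorem \ref{thm:3}, whose conclusion is already available for every $p\in(1,\infty)$, it suffices to prove the short variation estimate \eqref{eq:91} for $G=B_2$ and all $p\in(1,\infty)$. Two dimension-free facts about $B_2$ will be used. The first is Stein's maximal theorem $\|M_\star^{B_2}f\|_{L^p}\lesssim_p\|f\|_{L^p}$ for $p\in(1,\infty]$, which is \cite{SteinMax} and which also supplies the lacunary and dilated variants \eqref{eq:82} and \eqref{eq:97} for $G=B_2$. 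The second, and the crux, is the bound $\sup_{u>0}\|\mathcal P_u^{\alpha}\|_{L^p\to L^p}\lesssim_{\alpha,p}1$ uniformly in $d$, for every $\alpha\in(1/2,1)$ and $p\in(1,\infty)$; by \eqref{eq:34} and the dilation relation $\mathfrak p_u^{\alpha}(\lambda\xi)=\mathfrak p_{\lambda u}^{\alpha}(\xi)$ this is equivalent to $\|T_{(\xi\cdot\nabla)^{\alpha}m^{B_2}}\|_{L^p\to L^p}\lesssim_{\alpha,p}1$ uniformly in $d$. For $B_2$ I would obtain this \emph{without} appealing to \cite{Mul1}: Proposition \ref{prop:1} together with the subordination identity \eqref{eq:40} gives $|(\xi\cdot\nabla)^{\alpha}m^{B_2}(\xi)|\lesssim_{\alpha}\min\{(L|\xi|)^{2},1\}$, hence a dimension-free $L^2$ bound by Plancherel; one then interpolates this (for instance through a complex-analytic family $z\mapsto T_{(\xi\cdot\nabla)^{z}m^{B_2}}$ anchored at $z=0$, where the operator is the contractive average $M_1^{B_2}$, and at $z=1$, where $(\xi\cdot\nabla)m^{B_2}$ is, up to lower order, the multiplier of the spherical average operator, again a contraction on $L^p$) exactly as in the ball-multiplier analysis of \cite{SteinMax}. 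Establishing this estimate uniformly in the dimension is the main obstacle; everything else is a transcription of arguments already present in the paper.

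For $p\in[2,\infty)$ I would argue verbatim as in the proof of \eqref{eq:91} for $p\in(2,4)$ in Theorem \ref{thm:6} and in the proof of \eqref{eq:91} for $p\in(2,\infty)$ in Theorem \ref{thm:7}. Namely, by Lemma \ref{lem:6} (with $r=2$) and the Littlewood--Paley decomposition \eqref{eq:80} the estimate \eqref{eq:91} reduces to
\[
\bigg\|\Big(\sum_{n\in\ZZ}\sum_{k=0}^{2^{l}-1}\big|M_{2^n+2^{n-l}(k+1)}S_{j+n}f-M_{2^n+2^{n-l}k}S_{j+n}f\big|^2\Big)^{1/2}\bigg\|_{L^p}\lesssim 2^{-\delta_p l}2^{-\varepsilon_p|j|}\|f\|_{L^p},
\]
with exponents summable in $l\ge0$ and $j\in\ZZ$. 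The $L^2$ input \eqref{eq:56} holds for every symmetric convex body, hence for $B_2$; for the $L^p$ input one runs the chain \eqref{eq:132}--\eqref{eq:137}, which uses only the shifted maximal bound \eqref{eq:97} (Stein, for $B_2$) and the smoothing estimate $\|M^*_{t+h}f-M^*_tf\|_{L^q}\lesssim(h/t)^{\alpha}\|f\|_{L^q}$. The latter is the content of Lemma \ref{lem:3}, whose proof uses only the representation \eqref{eq:99} and the dimension-free bound $\sup_{u>0}\|\mathcal P_u^{\alpha}\|_{L^p\to L^p}\lesssim1$ established above, so it is available for $B_2$. This gives an $L^p$ bound of size $\lesssim 2^{l/2-\frac{p'}{2}\alpha l}\|f\|_{L^p}$ for any $\alpha\in(1/2,1)$; since $1/p'<1/2$ when $p>2$, taking $\alpha$ close to $1$ makes the exponent of $2^l$ negative, and interpolating with \eqref{eq:56} produces decay in $|j|$ as well. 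Thus \eqref{eq:91} holds for all $p\in[2,\infty)$.

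For $p\in(1,2)$ I would invoke the almost orthogonality principle Proposition \ref{prop:2} exactly as in the $p\le2$ part of the proof of Theorem \ref{thm:7}, with $p_0=1$, $Z=(0,\infty)$, $T_t=M_t^{B_2}$, $S_n=P_{L2^n}-P_{L2^{n-1}}$ and $a_j=2^{-\varepsilon|j|/4}$. Condition \eqref{eq:1} is the Littlewood--Paley inequality \eqref{eq:81}; condition \eqref{eq:11} is Stein's maximal theorem for $B_2$; condition \eqref{eq:92} is \eqref{eq:56'}, which follows from \eqref{eq:56} via Lemma \ref{lem:6} and \eqref{eq:80}. The only remaining condition \eqref{eq:33}, i.e. $\sup_{n}\|V_p(M_tf:t\in[2^n,2^{n+1}))\|_{L^p}\lesssim\|f\|_{L^p}$, follows after rescaling from Lemma \ref{lem:7}: for $\alpha\in(1/p,1)\cap(1/2,1)$ (a nonempty interval since $p>1$), \eqref{eq:4} gives
\[
\big\|V_p\big(M_tf:t\in[1,2)\big)\big\|_{L^p}\lesssim\|f\|_{L^p}+\sup_{t>0}\big\|T_{((t\xi)\cdot\nabla)^{\alpha}m^{B_2}(t\xi)}f\big\|_{L^p}\lesssim\|f\|_{L^p},
\]
where the last step uses the dilation invariance of the operator norm and the dimension-free multiplier bound for $B_2$. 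Proposition \ref{prop:2} then yields \eqref{eq:91} for all $p\in(1,2)$, which together with the case $p\ge2$ and the reduction via \eqref{eq:21} and Theorem \ref{thm:3} completes the proof of Theorem \ref{thm:ball}.
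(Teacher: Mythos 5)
The proposal takes a genuinely different route from the paper's Appendix. Your plan is to re-run the proof of Theorem \ref{thm:7} for $G=B_2$, i.e.\ to use the almost orthogonality principle (Proposition \ref{prop:2}) together with Lemma \ref{lem:7} for $p<2$, and the chain \eqref{eq:132}--\eqref{eq:137} with Lemma \ref{lem:3} for $p>2$, the one missing ingredient being a dimension-free bound for $T_{(\xi\cdot\nabla)^{\alpha}m^{B_2}}$ on $L^p$ for all $\alpha\in(1/2,1)$ and $p\in(1,\infty)$. The paper's Appendix does something entirely different: Lemma \ref{lem:2} (a pure mean value theorem plus Cauchy--Schwarz argument) dominates the whole short $V_2$-sum by the square function $\bigl(\int_0^{\infty}\bigl|t\frac{{\rm d}}{{\rm d}t}M_tf\bigr|^2\frac{{\rm d}t}{t}\bigr)^{1/2}$ in one stroke, with no case split in $p$, no orthogonality principle, and no fractional-derivative characterisation of $V_p$. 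The resulting square-function bound \eqref{eq:44} is then reduced to the spherical square function \eqref{eq:45}, which is proved for all $p\in(1,\infty)$ uniformly in $d$ by a method of descent: one first proves it in a finite dimension $d_0(p)$ (by complex interpolation between $L^2$ on the strip ${\rm Re}\,\alpha>(3-d)/2$ and a vector-valued Calder\'on--Zygmund bound at ${\rm Re}\,\alpha=3$), and then averages over rotations to pass to $d\ge d_0(p)$. This rotational descent is precisely what lets the paper avoid the multiplier estimate that your proposal is trying to reconstruct.

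The gap in your proposal is in the attempted Müller-free derivation of $\sup_{u>0}\|\mathcal P_u^{\alpha}\|_{L^p\to L^p}\lesssim_{\alpha,p}1$. First, the claimed pointwise bound is off: from \eqref{eq:40} together with Proposition \ref{prop:1} (supplemented by the Taylor expansion $m^{B_2}(\xi)=1-c(L|\xi|)^2+\ldots$ near the origin, which is \emph{not} part of Proposition \ref{prop:1}) one gets $|(\xi\cdot\nabla)^{\alpha}m^{B_2}(\xi)|\lesssim_{\alpha}\min\{(L|\xi|)^{\alpha},1\}$, not $\min\{(L|\xi|)^2,1\}$; the exponent $2$ can occur only at $\alpha=2$. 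Second, and more seriously, the interpolation you anchor at $z=1$ does not close. Since $t\frac{{\rm d}}{{\rm d}t}M_t=d(S_t-M_t)$ for the Euclidean ball, the multiplier $(\xi\cdot\nabla)m^{B_2}(\xi)$ is $d$ times the difference of the sphere- and ball-average multipliers; each of $S_1,M_1$ is a contraction on $L^p$, but the prefactor $d$ kills dimension-independence at that endpoint. So the analytic-family interpolation you describe produces a bound that grows with $d$, whereas what is needed is a bound uniform in $d$ for every $\alpha\in(1/2,1)$, which is exactly the content of M\"uller's theorem \eqref{eq:98'} (with $q=2$). If you simply cite M\"uller here, your argument becomes correct, but then it collapses to reading off the $q=2$ case of Theorem \ref{thm:7} rather than being an independent proof; the Appendix is in the paper precisely because it avoids that dependence.
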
 
From Section \ref{sec:3} we know that the estimate holds for long
variations. Thus, it is enough to provide the bound for short variations
$$
\bigg\lVert\Big(\sum_{n\in\ZZ}
V_2\big(  M_t f: t\in[2^n, 2^{n+1})\big)^2\Big)^{1/2}
\bigg\rVert_{L^p}\le C_p\|f\|_{L^p}.
$$
The estimates of short variations in Theorem \ref{thm:ball} will be based
on \eqref{eq:42} from the next lemma.

\begin{lemma}
	\label{lem:2}
	Let $u < v$ be real numbers and $\mathfrak a:[u, v]\rightarrow [0, \infty)$ be a
	differentiable function. For  every $r\in[2, \infty)$ we have
	\begin{align}
	\label{eq:15}
	V_r\big(\mathfrak a_t: t\in[u, v)\big)\lesssim
	\bigg(\int_u^v|\mathfrak a_t|^2\frac{{\rm
			d}t}{t}\bigg)^{1/4} \cdot
	\bigg(\int_u^v\Big|t\frac{{\rm
			d}}{{\rm d}t}\mathfrak
	a_t\Big|^2\frac{{\rm d}t}{t}\bigg)^{1/4}.
	\end{align}
	If additionally $u=2^l$ and $v=2^{l+1}$ for some $l\in\ZZ$ then
	\begin{align}
	\label{eq:17}
	V_r\big(\mathfrak a_t: t\in[2^l, 2^{l+1})\big)\lesssim
	\bigg(\int_{2^l}^{2^{l+1}}\Big|t\frac{{\rm
			d}}{{\rm d}t}\mathfrak
	a_t\Big|^2\frac{{\rm
			d}t}{t}\bigg)^{1/2}.
	\end{align}
	Consequently,
	\begin{equation}
	\label{eq:42}
	\bigg( \sum_{n\in \ZZ} V_2\big(\mathfrak a_t: t\in[2^n, 2^{n+1})\big)^2\bigg)^{1/2} \lesssim
	\bigg(\int_{0}^{\infty}\Big|t\frac{{\rm
			d}}{{\rm d}t}\mathfrak
	a_t\Big|^2\frac{{\rm d}t}{t}\bigg)^{1/2}
	\end{equation}
\end{lemma}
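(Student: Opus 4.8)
The plan is to establish the three displayed inequalities in the order they appear, deducing \eqref{eq:42} from \eqref{eq:17}. Throughout I will use the elementary fact that $V_r(\mathfrak a_t : t\in Z)\le V_2(\mathfrak a_t : t\in Z)$ for $r\ge 2$ (because $\ell^2\hookrightarrow\ell^r$ with norm one), so in each case it suffices to bound the $V_2$ seminorm. Fix a finite increasing sequence of parameters and recall that for a $C^1$ function $\mathfrak a_{t_{j+1}}-\mathfrak a_{t_j}=\int_{t_j}^{t_{j+1}}\mathfrak a'_s\,{\rm d}s=\int_{t_j}^{t_{j+1}}(s\mathfrak a'_s)\frac{{\rm d}s}{s}$, which is the only computation underlying everything.

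For \eqref{eq:15} I would use the hypothesis $\mathfrak a\ge 0$ through the pointwise inequality $|a-b|\le a+b$ valid for $a,b\ge 0$, which gives $(a-b)^2\le |a-b|(a+b)=|a^2-b^2|$. Applying this with $a=\mathfrak a_{t_{j+1}}$, $b=\mathfrak a_{t_j}$, summing over $j$, and noting that the intervals $[t_j,t_{j+1}]$ are pairwise disjoint subsets of $[u,v]$,
\[
\sum_{j}\big|\mathfrak a_{t_{j+1}}-\mathfrak a_{t_j}\big|^2\le \sum_j\big|\mathfrak a_{t_{j+1}}^2-\mathfrak a_{t_j}^2\big|=\sum_j\Bigl|\int_{t_j}^{t_{j+1}}2\mathfrak a_s\,\mathfrak a'_s\,{\rm d}s\Bigr|\le 2\int_u^v|\mathfrak a_s|\,|s\mathfrak a'_s|\,\frac{{\rm d}s}{s},
\]
and then Cauchy--Schwarz with respect to the measure ${\rm d}s/s$ bounds the right-hand side by $2\bigl(\int_u^v|\mathfrak a_s|^2\frac{{\rm d}s}{s}\bigr)^{1/2}\bigl(\int_u^v|s\mathfrak a'_s|^2\frac{{\rm d}s}{s}\bigr)^{1/2}$. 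Taking the supremum over all admissible sequences and a square root yields \eqref{eq:15}.

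For \eqref{eq:17}, with all $t_j\in[2^l,2^{l+1})$, I would instead apply Cauchy--Schwarz directly to $\mathfrak a_{t_{j+1}}-\mathfrak a_{t_j}=\int_{t_j}^{t_{j+1}}(s\mathfrak a'_s)\frac{{\rm d}s}{s}$, getting $|\mathfrak a_{t_{j+1}}-\mathfrak a_{t_j}|^2\le\log(t_{j+1}/t_j)\int_{t_j}^{t_{j+1}}|s\mathfrak a'_s|^2\frac{{\rm d}s}{s}\le(\log 2)\int_{t_j}^{t_{j+1}}|s\mathfrak a'_s|^2\frac{{\rm d}s}{s}$, since $\log(t_{j+1}/t_j)\le\log(2^{l+1}/2^l)=\log 2$; summing over the disjoint intervals and taking the supremum gives $V_2(\mathfrak a_t:t\in[2^l,2^{l+1}))^2\lesssim\int_{2^l}^{2^{l+1}}|s\mathfrak a'_s|^2\frac{{\rm d}s}{s}$, hence \eqref{eq:17}. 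Finally \eqref{eq:42} is immediate: apply \eqref{eq:17} with $r=2$ to the restriction of $\mathfrak a$ to each dyadic block $[2^n,2^{n+1})$, square, sum over $n\in\ZZ$, and use that $\{(2^n,2^{n+1}):n\in\ZZ\}$ tiles $(0,\infty)$ up to a null set, so $\sum_{n\in\ZZ}\int_{2^n}^{2^{n+1}}|t\mathfrak a'_t|^2\frac{{\rm d}t}{t}=\int_0^\infty|t\mathfrak a'_t|^2\frac{{\rm d}t}{t}$; a square root finishes the proof.

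The argument is entirely elementary, so there is no real obstacle; the only step requiring thought is the passage from $V_2(\mathfrak a)$ to the total variation of $\mathfrak a^2$ in \eqref{eq:15}, where the sign hypothesis $\mathfrak a\ge 0$ is used essentially (it fails if $\mathfrak a$ changes sign). Everything else reduces to Cauchy--Schwarz on each subinterval together with the uniform bound $\log(t_{j+1}/t_j)\le\log 2$ inside a dyadic block.
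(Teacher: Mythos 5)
Your proof is correct and follows essentially the same route as the paper's: the key identity $(a-b)^2 \le |a^2-b^2|$ for $a,b\ge 0$ plus Cauchy--Schwarz for \eqref{eq:15}, Cauchy--Schwarz on each subinterval for \eqref{eq:17}, and disjointness of the dyadic tiling for \eqref{eq:42}. The only cosmetic difference is in \eqref{eq:17}: you apply Cauchy--Schwarz with respect to the measure ${\rm d}s/s$ and bound $\int_{t_j}^{t_{j+1}}{\rm d}s/s = \log(t_{j+1}/t_j)\le\log 2$, whereas the paper applies Cauchy--Schwarz with Lebesgue measure and then uses $t_{j+1}-t_j\le 2^l\le t$ to absorb one factor of $t$; both give constants $\le 1$ and the arguments are equivalent in substance.
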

\begin{proof}
	For the proof of \eqref{eq:15}, let $u\le t_0<t_1<\ldots<t_M<v$ be
	an arbitrary increasing sequence. By, the mean value
	theorem and the Cauchy--Schwarz inequality we obtain
	
	\begin{align*}
	|\mathfrak a_{t_{j+1}}-\mathfrak a_{t_{j}}|^2&\le
	|\mathfrak a_{t_{j+1}}^2-\mathfrak
	a_{t_{j}}^2|=\Big|\int_{t_{j}}^{t_{j+1}}2\mathfrak
	a_tt\frac{{\rm
			d}}{{\rm d}t}\mathfrak
	a_t\frac{{\rm
			d}t}{t}\Big|\\
	&\le 2
	\bigg(\int_{t_{j}}^{t_{j+1}}|\mathfrak a_t|^2\frac{{\rm
			d}t}{t}\bigg)^{1/2}\cdot
	\bigg(\int_{t_{j}}^{t_{j+1}}\Big|t\frac{{\rm
			d}}{{\rm d}t}\mathfrak
	a_t\Big|^2\frac{{\rm d}t}{t}\bigg)^{1/2}.
	\end{align*}
	Therefore, by the Cauchy--Schwarz inequality, we see
	\begin{align*}
	\sum_{j=0}^{M-1}|\mathfrak a_{t_{j+1}}-\mathfrak a_{t_{j}}|^2\le &2\sum_{j=0}^{M-1}
	\bigg(\int_{t_{j}}^{t_{j+1}}|\mathfrak a_t|^2\frac{{\rm
			d}t}{t}\bigg)^{1/2}\cdot
	\bigg(\int_{t_{j}}^{t_{j+1}}\Big|t\frac{{\rm
			d}}{{\rm d}t}\mathfrak
	a_t\Big|^2\frac{{\rm
			d}t}{t}\bigg)^{1/2}\\
	&\le 2
	\bigg(\int_u^v|\mathfrak a_t|^2\frac{{\rm d}t}{t}\bigg)^{1/2} \cdot
	\bigg(\int_u^v\Big|t\frac{{\rm
			d}}{{\rm d}t}\mathfrak
	a_t\Big|^2\frac{{\rm d}t}{t}\bigg)^{1/2}
	\end{align*}
	as desired. To prove \eqref{eq:17} observe that by the mean value
	theorem and the Cauchy--Schwarz inequality we get 
	\begin{align*}
	|\mathfrak a_{t_{j+1}}-\mathfrak
	a_{t_{j}}|^2=\Big|\int_{t_{j}}^{t_{j+1}}t\frac{{\rm
			d}}{{\rm d}t}\mathfrak
	a_t\frac{{\rm
			d}t}{t}\Big|^2
	\le 
	(t_{j+1}-t_j)
	\int_{t_{j}}^{t_{j+1}}\Big|t\frac{{\rm
			d}}{{\rm d}t}\mathfrak
	a_t\Big|^2\frac{{\rm d}t}{t^2}\le  
	\int_{t_{j}}^{t_{j+1}}\Big|t\frac{{\rm
			d}}{{\rm d}t}\mathfrak
	a_t\Big|^2\frac{{\rm d}t}{t}
	\end{align*}
	for any sequence $2^l\le t_0<t_1<\ldots<t_M<2^{l+1}$, since
	$t_{j+1}-t_j\le 2^l$.
	Thus
	\begin{align*}
	\sum_{j=0}^{M-1}|\mathfrak a_{t_{j+1}}-\mathfrak
	a_{t_{j}}|^2\le
	\int_{2^l}^{2^{l+1}}\Big|t\frac{{\rm
			d}}{{\rm d}t}\mathfrak
	a_t\Big|^2\frac{{\rm d}t}{t}
	\end{align*}
	and the proof is completed. 
\end{proof}
Observe now that in view of \eqref{eq:42} the estimate for short variations reduces to 
\begin{equation}
\label{eq:44}
\bigg\|\bigg(\int_0^{\infty}\Big|t\frac{\rm d}{{\rm d}t}M_t f\Big|^2\frac{{\rm d}t}{t}\bigg)^{1/2}\bigg\|_{L^p}\leq C_p \|f\|_{L^p}.\end{equation}

Let 
$$S_tf(x)=\int_{\mathbb S^{d-1}}f(x-ty){\rm d}\sigma(y),$$
where ${\rm d}\sigma$ denotes the normalized spherical measure on the unit sphere $\mathbb S^{d-1}$ of $\RR^d.$
The estimate \eqref{eq:44} will be deduced from an analogous statement for the spherical averages, namely
\begin{equation}
\label{eq:45}
\bigg\|\bigg(\int_0^{\infty}\Big|t\frac{{\rm d}}{{\rm d}t}S_t f\Big|^2\,\frac{{\rm d}t}{t}\bigg)^{1/2}\bigg\|_{L^p}\leq C_p \|f\|_{L^p}.\end{equation} To see that \eqref{eq:45} does imply \eqref{eq:44} we note that
$$M_tf(x)=d\int_0^1 u^{d-1}S_{tu}f(x){\rm d}u.$$ Thus, setting $F(t,x)=S_tf(x)$ we have
$$\frac{{\rm d}}{{\rm d}t}M_tf(x)= d\int_0^1 u^{d-1}u(\partial_1F)(ut,x){\rm d}u,$$
and, consequently, by Minkowski's inequality
\begin{align*}
  \bigg\|\bigg(\int_0^{\infty}\Big|t\frac{{\rm d}}{{\rm d}t}M_t f\Big|^2\,\frac{{\rm d}t}{t}\bigg)^{1/2}\bigg\|_{L^p}&\leq d\int_0^{1}u^{d-1}\left\|\bigg(\int_0^{\infty}\big|ut(\partial_1F)(ut,x)\big|^2\,\frac{{\rm d}t}{t}\bigg)^{1/2}\right\|_{L^p}{\rm d}u\\
&= \bigg\|\bigg(\int_0^{\infty}\Big|t\frac{{\rm d}}{{\rm d}t}S_t f\Big|^2\,\frac{{\rm d}t}{t}\bigg)^{1/2}\bigg\|_{L^p}.
\end{align*}
From now on we focus on \eqref{eq:45}. More precisely, we shall prove the following proposition.

\begin{proposition}
	\label{pro:spher_sq}
	For each $p\in(1, \infty)$ there exists $d_0(p)\in\NN$
        such that for $d\ge d_0(p)$ we have
	$$ \bigg\|\bigg(\int_0^{\infty}\Big|t\frac{{\rm d}}{{\rm d}t}S_t f\Big|^2\,\frac{{\rm d}t}{t}\bigg)^{1/2}\bigg\|_{L^p}\le C_{p}\|f\|_{L^p}$$
	for every $f\in L^p(\RR^d)$, where $C_p>0$ is independent of $d$.
\end{proposition}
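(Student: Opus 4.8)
The plan is to prove the square function estimate \eqref{eq:45}; combined with the reduction carried out just above and the long‑variation bound of Section \ref{sec:3}, this yields Proposition \ref{pro:spher_sq} and Theorem \ref{thm:ball}. Write $\widehat{{\rm d}\sigma}(\xi)=\Phi(|\xi|)$; by rotation invariance $\Phi$ is a smooth function of $|\xi|^{2}$, and $t\frac{{\rm d}}{{\rm d}t}S_{t}$ is the Fourier multiplier operator with symbol $\Psi(t|\xi|)$, where $\Psi(s)=s\Phi'(s)$. Two properties of $\Phi$ will drive the argument, both expressing that a coordinate on $\Ss^{d-1}$ is approximately $N(0,1/d)$‑distributed: \emph{(a)} near the origin $\Phi(s)=e^{-2\pi^{2}s^{2}/d}+E(s)$ with $|E(s)|\lesssim\frac1d\big(\frac{s^{2}}{d}\big)^{2}e^{-cs^{2}/d}$ (this follows by comparing the moment expansion $\int_{\Ss^{d-1}}y_{1}^{2k}\,{\rm d}\sigma=(2k-1)!!/\big(d(d+2)\cdots(d+2k-2)\big)$ term by term with the Gaussian series), together with an analogous estimate for $\Phi'$; \emph{(b)} $|\Phi(s)|+|\Phi'(s)|+|\Phi''(s)|\lesssim e^{-cs^{2}/d}$ for $s\lesssim d$, while for $s\gtrsim d$ these quantities are $\lesssim(cs/d)^{-(d-1)/2}$, hence exponentially small in $d$ (Bessel/stationary‑phase asymptotics at the concentration scale). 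The $L^{2}$ bound in \eqref{eq:45} is then immediate: by Plancherel and Fubini the square of the left‑hand side equals $\big(\int_{0}^{\infty}|\Psi(s)|^{2}\,\frac{{\rm d}s}{s}\big)\|f\|_{L^{2}}^{2}$, and $\int_{0}^{\infty}|\Psi(s)|^{2}\,\frac{{\rm d}s}{s}$ is finite and bounded independently of $d$ by (a)--(b) (convergence at $0$ uses $\Psi(s)=O(s^{2}/d)$, at $\infty$ the decay); in fact it tends to a limit as $d\to\infty$.

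For $p\neq 2$ I would split $S_{t}$ at frequency $\sim\Lambda_{d}/t$, where $\Lambda_{d}$ is of size $\sqrt d$ up to a slowly growing factor (chosen so that $\Lambda_{d}^{2}/d$ stays negligible). Fix $\zeta\in C_{c}^{\infty}$ with $\zeta\equiv1$ on $[0,1]$ and $\supp\zeta\subset[0,2]$, and let $S_{t}^{\mathrm{lo}}$, $S_{t}^{\mathrm{hi}}$ have symbols $\Phi(t|\xi|)\zeta(t|\xi|/\Lambda_{d})$ and $\Phi(t|\xi|)(1-\zeta(t|\xi|/\Lambda_{d}))$, so that $G\le G^{\mathrm{lo}}+G^{\mathrm{hi}}$ for the corresponding square functions. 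The low part is handled by comparison with the heat semigroup $H_{u}=e^{u\Delta}$: on the support of the cutoff, property (a) shows that the symbol of $S_{t}^{\mathrm{lo}}$ differs from $e^{-2\pi^{2}t^{2}|\xi|^{2}/d}$ by a multiplier whose associated square function is $o(1)$ (as $d\to\infty$) times the heat $g$-function, while $e^{-2\pi^{2}t^{2}|\xi|^{2}/d}$ differs from its $\zeta$-truncation only where the Gaussian is already $\le e^{-c\Lambda_{d}^{2}/d}$. After the substitution $u=t^{2}/(2d)$ one has $t\partial_{t}H_{t^{2}/(2d)}f=2u\partial_{u}H_{u}f$, so the square function of the main term is, up to a constant, the heat $g$-function $\big(\int_{0}^{\infty}u|\partial_{u}H_{u}f|^{2}\,{\rm d}u\big)^{1/2}$, which is bounded on $L^{p}$, $1<p<\infty$, with a dimension‑independent constant (Littlewood--Paley theory for symmetric diffusion semigroups, or subordination to the Poisson semigroup together with \eqref{eq:79} and \eqref{eq:81}). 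Hence $\|G^{\mathrm{lo}}f\|_{L^{p}}\lesssim_{p}\|f\|_{L^{p}}$ uniformly in $d$.

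For the high part, decompose dyadically: write $\Psi(s)(1-\zeta(s/\Lambda_{d}))=\sum_{2^{j}\gtrsim\Lambda_{d}}\Psi_{j}(s)$ with $\supp\Psi_{j}\subset\{s\sim2^{j}\}$ (the term in which $t\partial_{t}$ falls on the cutoff is of the same type and treated identically), so that $G^{\mathrm{hi}}f\le\sum_{2^{j}\gtrsim\Lambda_{d}}G_{j}f$, where $G_{j}f=\big(\int_{0}^{\infty}\big|\calF^{-1}\big(\Psi_{j}(t|\xi|)\calF f\big)\big|^{2}\,\frac{{\rm d}t}{t}\big)^{1/2}$. By Plancherel and (b), $\|G_{j}f\|_{L^{2}}\lesssim c_{j}\|f\|_{L^{2}}$, and for $2^{j}\gtrsim\Lambda_{d}$ the $c_{j}$ are super‑polynomially small in $d$ and geometrically decaying in $j$ (essentially $c_{j}\lesssim\frac{2^{2j}}{d}e^{-c2^{2j}/d}$ while $2^{j}\lesssim d$, and $c_{j}\lesssim(c2^{j}/d)^{-(d-1)/2}$ beyond). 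On the other hand, applying vector‑valued Calder\'on--Zygmund theory in the fixed dimension $d$ to the $L^{2}(\frac{{\rm d}t}{t})$‑valued convolution kernel $t\mapsto\calF^{-1}(\Psi_{j}(t|\xi|))$ yields a crude bound $\|G_{j}f\|_{L^{p}}\lesssim_{p}2^{A(p)j}d^{A(p)}\|f\|_{L^{p}}$ valid for all $p\in(1,\infty)$. Interpolating between these two and summing,
\[
\|G^{\mathrm{hi}}f\|_{L^{p}}\le\sum_{2^{j}\gtrsim\Lambda_{d}}\|G_{j}f\|_{L^{p}}\lesssim_{p}\Big(\sum_{2^{j}\gtrsim\Lambda_{d}}c_{j}^{1-\theta}\,2^{A(p)j\theta}\,d^{A(p)\theta}\Big)\|f\|_{L^{p}},\qquad\theta=\theta(p)\in(0,1),
\]
and the series converges, indeed is $\lesssim_{N}d^{-N}$ for every $N$, provided $(d-1)(1-\theta)/2>A(p)\theta$, i.e.\ once $d\ge d_{0}(p)$. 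Together with the low‑frequency bound this proves \eqref{eq:45}.

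The serious point is the low‑frequency estimate: on that range $\widehat{{\rm d}\sigma}$ is not small and $\asymp\log d$ dyadic blocks are at play, so a naive decompose‑and‑interpolate argument would lose a power of $d$; one really needs both the comparison with the \emph{dimension‑free} heat $g$-function and the sharp shape of the error in (a) — the gain $\frac1d(s^{2}/d)^{2}$, carrying a Gaussian cutoff, is exactly what lets the error term be absorbed. The high‑frequency part is comparatively soft: the exponential‑in‑$d$ smallness of the $L^{2}$ norms overwhelms any polynomial $L^{p}$ bound, the only price being the restriction $d\ge d_{0}(p)$, and $d_{0}(p)\to\infty$ as $p\to1$ or $p\to\infty$ since interpolation against the crude bound consumes more of the dimensional decay the further $p$ lies from $2$.
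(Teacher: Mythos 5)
Your approach is genuinely different from the paper's. The paper proves the estimate first on a \emph{fixed} moderate dimension $\RR^{d_0}$, $d_0=d_0(p)$, by complex interpolation of the analytic family $\calS^{\alpha}f=(e^{\alpha^2}t\frac{{\rm d}}{{\rm d}t}S_t^{\alpha}f:t>0)$ — the elementary $L^2$ bound \eqref{eq:47} valid for $\Real\alpha$ near $(3-d)/2$ against the vector-valued Calder\'on--Zygmund $L^p$ bound \eqref{eq:48} at $\Real\alpha=3$ — and then lifts the estimate to $\RR^d$, $d\ge d_0$, by Stein's rotation trick: $S_t=\int_{\mathcal O(d)}\rho\circ S_t'\circ\rho^{-1}\,{\rm d}\rho$ with $S_t'$ acting only on the first $d_0$ coordinates, so that the constant from $\RR^{d_0}$ is inherited verbatim and no analysis is ever performed uniformly in a large dimension. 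You instead work directly in $\RR^d$, splitting $\widehat{{\rm d}\sigma}(t\xi)$ at frequency $\sim\sqrt d/t$, comparing the low part with the heat semigroup via the moment/Gaussian expansion, and discarding the high part via the exponential-in-$d$ smallness of $\widehat{{\rm d}\sigma}$.

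There is, however, a genuine gap in the low-frequency step for $p\neq 2$. You prove the pointwise symbol bound $|E(s)|\lesssim\frac1d(s^2/d)^2 e^{-cs^2/d}$ and then \emph{assert} that the square function associated with the error multiplier $E(t|\xi|)$ is controlled on $L^p$ by $o(1)$ times the heat $g$-function. This does not follow: a pointwise comparison of two radial symbols gives no comparison of the $L^p$ norms of their square functions when $p\neq2$. To close this one would need either a pointwise domination of the error \emph{kernel} by a small multiple of the heat-derivative kernel, or a subordination identity expressing $E(t|\xi|)$ as $\int_0^\infty b(u)\,u\partial_u\big(e^{-ut^2|\xi|^2/d}\big)\,{\rm d}u$ with $\int|b|\,{\rm d}u=o_d(1)$, or a vector-valued Calder\'on--Zygmund estimate for the error kernel with a dimension-free (indeed $o_d(1)$) constant; none of these is supplied, and the last is precisely the kind of dimension-free singular-integral statement that is \emph{not} automatic for radial multipliers in $\RR^d$ and that the paper's rotation trick is designed to avoid. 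The same unproved dimension-dependence issue appears, more mildly, in the crude high-frequency $L^p$ bound $\|G_jf\|_{L^p}\lesssim 2^{A(p)j}d^{A(p)}\|f\|_{L^p}$, where the polynomial-in-$d$ growth of the CZ constant for $\Psi_j(t|\xi|)$ is stated but not verified. So the plan is plausible in outline but the central low-frequency $L^p$ transfer is a missing idea, not a missing computation.
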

To see that Proposition \ref{pro:spher_sq} is enough we note that it
implies \eqref{eq:44}, hence, also Theorem \ref{thm:ball} for
dimensions $d\ge d_0(p).$ For the finite number of smaller dimensions
$d<d_0(p)$ we just use the known fact that Theorem \ref{thm:ball}
holds with some $C_{r,p,d}>0$.

We pass to the proof of the proposition.
\begin{proof}[Proof of {Proposition \ref{pro:spher_sq}}]
Set
\begin{align*}
  K^{\alpha}(x)=
  \begin{cases}
  \frac1{\Gamma(\alpha)}(1-|x|^2)^{\alpha-1}, & \text{ for } |x|<1,\\
  \ \ \ \ \ \ \ \ \ 0 & \text{ for } |x|\ge1,
  \end{cases}
\end{align*}
	and let $m^{\alpha}(\xi)=\calF(K^{\alpha})(\xi).$ Then $K^{\alpha},$ as a function of $\alpha\in \CC,$ is analytic. 
	
	For $t>0$ we define $K_t^{\alpha}(x)=t^{-d}K^{\alpha}(x/t)$,  thus $\calF(K_t^{\alpha})(\xi)=m(t\xi).$ It is well known that 
	$$m^{\alpha}(\xi)=\pi^{-\alpha+1} |\xi|^{-d/2-\alpha+1}J_{d/2+\alpha-1}(2\pi |\xi|),$$
	where $J_{\nu}$ is the Bessel function of order $\nu.$ Therefore
	\begin{equation}
	\label{eq:46}
	|m^{\alpha}(\xi)|+|\nabla m^{\alpha}(\xi)|\le C_{d,\Real(\alpha)}\min(1,|\xi|^{-d/2+1/2-\Real (\alpha)}).\end{equation}
	
	Set $S_t^{\alpha}f=f*K_{t}^{\alpha}$ so that $S_t^0=S_t.$
	Using \eqref{eq:46} and a straightforward computation we obtain
	$$\int_0^{\infty}\Big|t\frac{{\rm d}}{{\rm d}t}m^{\alpha}(t\xi)\Big|^2\,\frac{{\rm d}t}{t}\leq C_{d,\Real(\alpha)},$$
	for $\Real(\alpha)>(3-d)/2.$ 
	Consequently, for such $\alpha$ we have
	\begin{equation}
	\label{eq:47}
	\bigg\|\bigg(\int_0^{\infty}\Big|t\frac{{\rm d}}{{\rm d}t}S_t^{\alpha} f\Big|^2\,\frac{{\rm d}t}{t}\bigg)^{1/2}\bigg\|_{L^2}\le C_{d,\Real(\alpha)}\|f\|_{L^2}.
	\end{equation} 
	We claim that, if $\Real(\alpha)=3,$ then for all $1<p<\infty$ it holds 
	\begin{equation}
	\label{eq:48}
	\bigg\|\bigg(\int_0^{\infty}\Big|t\frac{{\rm d}}{{\rm d}t}S_t^{\alpha} f\Big|^2\,\frac{{\rm d}t}{t}\bigg)^{1/2}\bigg\|_{L^p}\le \frac{C_{d,p}}{|\Gamma(\alpha)|}\|f\|_{L^p}.
	\end{equation} 
	To prove \eqref{eq:48} we use the vector-valued
        Calder\'on--Zygmund theory. Namely, we consider the
        operator
        $$Tf=f*\calK,\qquad \text{where} \qquad \calK(x)=\bigg(t\frac{{\rm d}}{{\rm
            d}t}K_t^{\alpha}(x): t>0\bigg)$$ as a mapping with values
        in the Hilbert space
        $H=L^2\big((0,\infty),\,\frac{{\rm d}t}{t}\big).$ Then \eqref{eq:48}
        is equivalent to proving that $T$ maps $L^p$ into $L^p(H).$
        Note that \eqref{eq:47} implies that this is true for $p=2.$
        Thus, the vector-valued Calder\'on--Zygmund theory reduces our
        task to checking that
	\begin{align*}
	\|\calK(x)\|_{H}\leq \frac{C_{d}}{|\Gamma(\alpha)|}|x|^{-d},\qquad \|\nabla_{x}\calK(x)\|_{H}\leq \frac{C_{d}}{|\Gamma(\alpha)|}|x|^{-d-1}.
	\end{align*} 
	When $\Real(\alpha)=3$ both of these estimates (which actually are equalities) follow from the definition of $K_{t}^{\alpha}$. Hence, \eqref{eq:48} is proved.
	
	In the next step we fix $1<p<\infty.$ We will show the existence of $d_0=d_0(p)$ such that
	\begin{equation} 
	\label{eq:49}
	\bigg\|\bigg(\int_0^{\infty}\Big|t\frac{{\rm d}}{{\rm d}t}S_t f\Big|^2\,\frac{{\rm d}t}{t}\bigg)^{1/2}\bigg\|_{L^p(\RR^{d_0})}\le C_{p}\|f\|_{L^p(\RR^{d_0})}.\end{equation}
	Assume first that $1<p<2.$ Take $1<q<2,$ where $q$ is close to $1$ and write $1/p=(1-\theta)/2+\theta/q,$ where $\theta\in (0,1).$ We also let $\varepsilon$ be a small real number. The quantities $q,\theta,$ and $\varepsilon$ will be determined in a moment.  We use complex interpolation for the analytic family of operators
        $$\calS^{\alpha}f=\bigg(e^{\alpha^2}t\frac{{\rm d}}{{\rm d}t}S_t^{\alpha}f: t>0\bigg).$$
        Namely, by \eqref{eq:47} and \eqref{eq:48} we have \begin{equation}
	\label{eq:50}
	\begin{split}\|\calS^{\alpha}\|_{L^2(\RR^d,H)\to L^2(\RR^d)}&\le C_{d,\varepsilon} \qquad  \textrm{for }\Real(\alpha)=(3-d)/2+\varepsilon\\
	\|\calS^{\alpha}\|_{L^q(\RR^d,H)\to L^q(\RR^d)}&\le C_{d,q}\qquad \textrm{for }\Real(\alpha)=3.
	\end{split}\end{equation} 
	We want to take $\varepsilon$ positive and such that $0=(1-\theta)((3-d)/2+\varepsilon)+3\theta.$ Then
	\begin{equation}
	\label{eq:51}
	p=((1-\theta)/2+\theta/q)^{-1}>\frac{(d+3)}{(d-3)/q+3}.\end{equation}
	Based on the above considerations we claim that \eqref{eq:49} holds for any $d_0=d_0(p)>3/(p-1).$
	Indeed, then $p>\frac{d_0(p)+3}{d_0(p)},$ so that there is a $1<q<p$ small enough and such that \eqref{eq:51} holds for $q$ and $d_0(p)$. But then \eqref{eq:50} holds with a positive $\varepsilon,$ hence   
	by complex interpolation in \eqref{eq:50}, for $1<p<2$ we obtain the bound
	$$ \bigg\|\bigg(\int_0^{\infty}\Big|t\frac{{\rm d}}{{\rm
            d}t}S_t f\Big|^2\,\frac{{\rm
            d}t}{t}\bigg)^{1/2}\bigg\|_{L^p(\RR^{d_0})}\le
        C_{d,p}\|f\|_{L^p(\RR^{d_0})}.$$  If $p\ge 2,$ then a similar
        interpolation argument shows that \eqref{eq:49} holds for any
        $d_0(p)>3/(p'-1).$ Therefore, \eqref{eq:49} is proved for
        $d_0=d_0(p)= \lceil \max\{3/(p-1), 3/(p'-1)\}\rceil$. 
	
	In what follows we fix $d_0$ such that \eqref{eq:49} holds. Let $d\ge d_0$ and, for $$x=(x_1, x_2)\in \RR^{d}=\RR^{d_0}\times\RR^{d-d_0}$$ define
	$$S_t'f(x_1, x_2)=\int_{\mathbb S^{d_0-1}}f(x_1-ty_1, x_2)\,d\sigma(y_1).$$
	Since $S_t'$ acts only on $x_1$ from \eqref{eq:49} we obtain
	\begin{equation} 
	\label{eq:52}
	\bigg\|\bigg(\int_0^{\infty}\Big|t\frac{{\rm d}}{{\rm d}t}S_t' f\Big|^2\,\frac{{\rm d}t}{t}\bigg)^{1/2}\bigg\|_{L^p(\RR^{d})}\le C_{p}\|f\|_{L^p(\RR^{d})}.\end{equation}
	Note that the constant $C_p$ from \eqref{eq:49} is preserved. Let $\mathcal O(d)$ be the group of orthogonal transformations on $\RR^d$.
	For $\rho\in \mathcal O(d)$ we set
	$$S_t^{\rho}=\rho\circ S_t'\circ \rho^{-1},\qquad \textrm{where }\qquad (\rho\circ g)(x)=g(\rho x).$$ Then
	$\frac{{\rm d}}{{\rm d}t}(S_t^{\rho})=\rho\circ (\frac{{\rm
            d}}{{\rm d}t}S_t')\circ \rho^{-1},$ and \eqref{eq:52}
        gives, for $\rho\in \mathcal O(d),$ the bound
	\begin{equation} 
	\label{eq:53}
	\bigg\|\bigg(\int_0^{\infty}\Big|t\frac{{\rm d}}{{\rm d}t}S_t^{\rho} f\Big|^2\,\frac{{\rm d}t}{t}\bigg)^{1/2}\bigg\|_{L^p(\RR^{d})}\le C_{p}\|f\|_{L^p(\RR^{d})}.\end{equation}
	To complete the proof we note that
	$$S_tf(x)=\int_{\mathcal O(d)}S_t^{\rho}f(x){\rm d}\rho,$$
	where ${\rm d}\rho$ denotes the probabilistic Haar measure on $\mathcal O(d)$.
	Therefore
	$$t\,\frac{{\rm d}}{{\rm d}t}S_tf(x)=\int_{\mathcal O(d)}t\,\frac{{\rm d}}{{\rm d}t}\,S_t^{\rho}f(x){\rm d}\rho.$$ Hence, an application of Minkowski's integral inequality for the $L^p(H)$ norm in question together with \eqref{eq:53} leads to 
	$$ \bigg\|\bigg(\int_0^{\infty}\Big|t\frac{{\rm d}}{{\rm d}t}S_t f\Big|^2\,\frac{{\rm d}t}{t}\bigg)^{1/2}\bigg\|_{L^p}\le C_{p}\|f\|_{L^p}.$$
	This finishes the proof of the proposition.
\end{proof}


\begin{thebibliography}{16}
 	
\bibitem{B1} J. Bourgain, \textit{On high dimensional maximal functions
  associated to convex bodies}, Amer. J.  Math. {\bf 108} (1986),
pp. 1467--1476.
 	\bibitem{B2} J. Bourgain, \textit{On $L^p$ bounds for maximal
          functions associated to convex bodies in $\RR^n$}, Israel
        J.   Math. {\bf 54} (1986), pp. 257--265.
        \bibitem{B33} J. Bourgain, \textit{On dimension free maximal
          inequalities for convex symmetric bodies in
          {${\mathbf R}^n$}}, Geometrical aspects of functional
        analysis (1985/86). Lecture Notes in Math.  {\bf 1267} (1987),
        pp. 168--176.

 	\bibitem{B3} J. Bourgain, \textit{On the Hardy-Littlewood
          maximal function for the cube}, Israel J. Math.  {\bf 203} (2014),
        pp. 275--293.

 	\bibitem{BMSW2} J. Bourgain, M. Mirek, E. Stein, B. Wr\'obel,
        \textit{Dimension-free estimates for maximal functions:
          discrete and continuous perspective}, preprint 2017.
 	
        \bibitem{Car1} A. Carbery, \textit{An almost-orthogonality
          principle with applications to maximal functions associated
          to convex bodies}, Bull. Amer. Math. Soc. (2) {\bf 14} (1986),
        pp. 269--274.
 	\bibitem{DGM1} L. Delaval, O. Gu\'edon, B. Maurey,
        \textit{Dimension-free bounds for the Hardy-Littlewood maximal
          operator associated to convex sets}, preprint (2016)
        https://arxiv.org/abs/1602.02015
 	\bibitem{GarRdF1} J. Garc\'ia-Cuerva, J. L. Rubio de Francia,
        \textit{Weighted norm inequalities and related topics},
        North-Holland Mathematics Studies, {\bf 116}. Notas de Matem\'atica
        104. North-Holland Publishing Co., Amsterdam, 1985.
 	\bibitem{JR1} R. L. Jones, K. Reinhold, \textit{Oscillation and
          variation inequalities for convolution powers}, Ergodic
        Theory and Dynam. Systems (6) {\bf 21} (2001), pp. 1809--1829.
 	\bibitem{JSW} R. L. Jones, A. Seeger, J. Wright, \textit{Strong
          Variational and Jump Inequalities in Harmonic Analysis},
        Trans. Amer. Math. Soc.  (12) {\bf 360} (2008),
        pp. 6711--6742.
 	\bibitem{MTS1} M. Mirek, E. M. Stein, B. Trojan,
        \textit{$\ell^p(\ZZ^d)$-estimates for discrete operators of
          Radon type: Variational estimates}, Invent. Math. (3) {\bf 209}
        (2017), pp. 665--748.
 	\bibitem{Mul1} D. M\"uller, \textit{A geometric bound for
          maximal functions associated to convex bodies}, Pacific
        J. Math., (2)  {\bf 142} (1990), pp. 297--312.
 	
 	\bibitem{Ste1} E. M. Stein, \textit{Topics in harmonic
          analysis related to the Littlewood-Paley theory}, Annals of
        Mathematics Studies, Princeton University Press 1970.
 	\bibitem{StSing} E. M. Stein, \textit{Singular Integrals and
          Differentiability Properties of Functions}, Princeton
        Univ. Press, Princeton, 1971.
 	
 	
 	\bibitem{SteinMax} E. M. Stein, \textit{The development of
          square functions in the work of A. Zygmund}, Bull.
        Amer. Math. Soc, {\bf 7} (1982), pp. 359--376.
 	\bibitem{SteinRiesz} E. M. Stein, \textit{Some results in
          harmonic analysis in $\mathbb R^n$, $n\to \infty$},
        Bull. Amer. Math. Soc. {\bf 9} (1983), pp. 71--73.
 	
 	\bibitem{SteWei} E. M. Stein,  and
        G. Weiss. \textit{Introduction to Fourier Analysis on
          Euclidean Spaces}, Princeton, N.J.: Princeton University
        Press, 1971.
 	
 	
 	
 	
 \end{thebibliography}
\end{document}